\newcommand{\R}{\mathbb{R}}
\newcommand{\pf}{\n{\em Proof.}   }
\newcommand{\n}{\noindent}
\newcommand{\sn}{\smallskip\noindent}
\newcommand{\ms}{\medskip}
\newcommand{\bn}{\bigskip\noindent}
\newtheorem{teo}{Theorem}[section]
\newtheorem{cor}[teo]{Corollary}
\newtheorem{lema}[teo]{Lemma}
\theoremstyle{definition}
\newtheorem{conj}[teo]{Conjecture}
\title{Convex bodies all whose sections (projections) are equal}
\author[L. Montejano]{Luis Montejano }
\address{IMATE, UNAM, Sede Juriquilla, Queretaro, Mexico}
\email{luis@im.unam.mx}
\date{\today}
\begin{document}
\maketitle
\tableofcontents
\section{Introduction}

The purpose of this paper is to answer the following question: 

\smallskip

\emph{If all hyperplane sections through the origin of a convex body are "equal", 
is the convex body  "equal" to the ball? }

\smallskip

The meaning of the notion "equal" will change in the course of this paper.

\smallskip

Similarly, we are interested in the following problem:

\smallskip

\emph{If all orthogonal projections of a convex body onto hyperplanes are "equal", 
is the convex body  "equal" to the ball? }

\smallskip

We believe that topology and convex geometry are deeply and beautifully interrelated in the solution and understanding of these problems.

\smallskip

During this paper, unless otherwise stated,  $B$ is always an $(n+1)$-dimensional convex body with the origin as an interior point and $n\geq 2$.

\section{Sections with the same area}

\medskip

The first meaning of "equal" is same "area".

\medskip 

\emph{ If all hyperplane sections through the origin of a convex body $B$ have equal 
$n$-dimensional volume, does $B$ have the $(n+1)$-dimensional volume of the corresponding ball?}

\smallskip

The answer to this question is by far a resounding no. The reader will find it easy to construct a counterexample. However, if we add the symmetry hypothesis to the question, the answer becomes yes.   More precisely: 

\smallskip

\begin{teo}\label{teoa}
If all hyperplane sections through the origin of a symmetric convex body $B$ have equal 
$n$-dimensional volume, then  the convex body $B$ is a ball centered at the origin.
\end{teo}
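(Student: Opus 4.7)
The natural approach is to pass to the \emph{radial function} $\rho_B \colon S^n \to (0,\infty)$ of $B$, so that $B = \{tv : v \in S^n,\; 0 \le t \le \rho_B(v)\}$. Central symmetry of $B$ is equivalent to $\rho_B$ being an even function on $S^n$. Using spherical coordinates inside the hyperplane $u^\perp$, the section volume can be rewritten as
\[
  \mathrm{vol}_n\bigl(B \cap u^\perp\bigr) \;=\; \frac{1}{n}\int_{S^n \cap u^\perp} \rho_B(v)^n \, d\sigma(v),
\]
which is $\tfrac{1}{n}$ times the \emph{spherical Radon (Funk) transform} $\mathcal{R}(\rho_B^n)(u)$, the integral of $\rho_B^n$ over the great $(n-1)$-sphere perpendicular to $u$.

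The plan is to read the hypothesis as saying $\mathcal{R}(\rho_B^n)$ is a constant function on $S^n$, and to compare with a Euclidean ball. Let $r>0$ be the radius of the centered ball $B_0$ whose central sections have the same $n$-volume as those of $B$; then $\mathcal{R}(r^n) = \mathcal{R}(\rho_B^n)$, so
\[
  \mathcal{R}\bigl(\rho_B^n - r^n\bigr) \;=\; 0.
\]
Since $\rho_B$ is even, so is the function $\rho_B^n - r^n$.

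The key step, and the only substantive one, is to invoke the \emph{injectivity of the Funk transform on even continuous functions on $S^n$}. This classical fact (proved by decomposing into spherical harmonics and applying the Funk--Hecke formula: $\mathcal{R}$ acts as a nonzero scalar on each space of even-degree harmonics and annihilates the odd-degree ones) immediately forces $\rho_B^n \equiv r^n$, whence $\rho_B \equiv r$ and $B = B_0$. The central-symmetry hypothesis is essential in this argument, because without it the odd part of $\rho_B^n$ lies in the kernel of $\mathcal{R}$ and is invisible to the section data; this is precisely what makes the non-symmetric version of the question fail, as alluded to just before the theorem.
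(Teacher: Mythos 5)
Your proof is correct and follows essentially the same route as the paper: both reduce the hypothesis to the vanishing of the Funk (spherical Radon) transform of an even continuous function built from the radial function, and conclude via the Funk--Hecke fact that this transform kills only odd functions. The sole difference is in the last step and is cosmetic: you compare $B$ directly with a centered ball of matching section volume, while the paper first proves uniqueness for a pair of symmetric bodies with equal parallel section areas and then applies it to $B$ and its rotates $GB$ to get rotation invariance.
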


\pf The proof of this theorem uses analysis.  We give here a sketch of the proof using harmonic integration. See Falconer's paper  \cite {F} or Schneider`s Book \cite{Sc2}.

First of all, let  $f: \mathbb{S}^n  \rightarrow  \mathbb{R}$ be a continuous function such that
\smallskip
\[
 \int_{\langle x,y \rangle = 0}f(x) dx = 0, \qquad \mbox{for every }y \in \mathbb{S}^n\,,
\]
where integration  refers to the usual measure in the $(n-1)$-sphere.
Then the classical theorem of Funk-Hecke on spherical harmonics (see \cite{G}) implies that $f$ is an odd function; that is, $-f(x) = f(-x)$,  for almost every  $x \in \mathbb{S}^n$.

Let now  $B_1$ and $B_2$ be two $(n+1)$-dimensional convex bodies that are symmetric with center at the origin and assume that the corresponding parallel $n$-dimensional areas of their  sections through the origin are equal. We shall show that 
$B_1=B_2$.  For this purpose, let  $f_1, f_2\colon \mathbb{S}^{n}  \rightarrow \mathbb{R}$ be the radial functions of $B_1$ and $B_2$. 
Note that because $B_1$ and $B_2$ are symmetric, $f_1$ and $f_2$ are even functions. 
Moreover, by hypothesis 
\smallskip
\[
\frac{1}{n} \int_{\langle x,y \rangle=0} f_1(x)^{n}  dx = \frac{1}{n}
\int_{\langle x,y \rangle=0} f_2(x)^{n}  dx,
\]
for every $y \in \mathbb{S}^{n-1}$.

\smallskip

By our first argument,  $f^{n}_1 - f^{n}_2$ is an odd function,
but since $f_1$ and $f_2$ are even functions, $f^{n}_1 = f^{n}_2$.
Moreover, since $f_1 \geq 0$  and $f_2 \geq 0$, we obtain that $f_1 = f_2$ and hence that $B_1=B_2$.

Suppose now $B$ is a symmetric convex body with the property that all its hyperplane sections through the origin have equal 
$n$-dimensional volume, and  let $G \in SO_n$ be  a linear isometry.
Then, by the above $B = GB$,  for every $G \in SO_n$, and consequently $B$ is a ball centered at the origin.
\qed

\section{Congruent and similar sections} 

The second meaning of "equal" is congruence.
\smallskip

\begin{teo}[Schneider's Theorem]\label{teosch}
If all hyperplane sections through the origin of a  convex body $B$ are congruent, 
 then the convex body $B$ is an $(n+1)$-ball centered at the origin.
\end{teo}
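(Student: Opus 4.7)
The natural plan is to reduce to Theorem \ref{teoa}. Since congruence forces all sections to share the same $n$-dimensional volume, Theorem \ref{teoa} delivers the conclusion as soon as we know $B=-B$; so the task is to promote ``congruent'' sections of $B$ to a ``centrally symmetric'' body.

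A convenient route is to show that each section $B_u$ is centrally symmetric about the origin, i.e.\ $B_u=-B_u$ for every $u\in\mathbb{S}^n$. This suffices: if $x\in B\setminus\{0\}$ and $u\in\mathbb{S}^n$ is chosen perpendicular to $x$, then $x\in B_u=-B_u$ forces $-x\in B$, giving $B=-B$. Letting $K$ denote the common congruence class of the sections, this reduces the problem to showing that $K$ is centrally symmetric and that the origin coincides with the centre of symmetry in every isometric copy realized as some $B_u$.

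To locate the origin inside the common shape $K$, I would exploit canonical-point functors on convex bodies (centroid, Steiner point, incenter, circumcenter). Each such functor $c$ is rigid-motion equivariant and continuous, so the assignment $u\mapsto c(B_u)\in u^\perp$ is a continuous section of the tangent bundle $T\mathbb{S}^n$. For even $n$ the hairy ball theorem forces each such section to vanish somewhere, giving at least one direction $u_0$ where the origin is a canonical centre of $B_{u_0}$. By running this argument for several canonical points simultaneously, and using that the isometry group $\mathrm{Isom}(K)$ permutes the collection of canonical centres of $K$, I would hope to deduce that every canonical centre of every $B_u$ equals the origin, which combined with the rigidity of $K$ would give the symmetry $B_u=-B_u$.

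The main anticipated obstacle is the odd-$n$ case, where $T\mathbb{S}^n$ admits nowhere-vanishing sections and the purely topological vanishing step fails. There one likely needs to replace the hairy-ball input with a harmonic-analytic identity in the spirit of the Funk--Hecke proof of Theorem \ref{teoa}, but encoding the full rigid-motion congruence (not merely equality of volumes) as an integral constraint on the radial function $\rho_B$; alternatively, one may combine the congruences with a Borsuk--Ulam argument applied to the involution $u\mapsto -u$, using that $B_u=B_{-u}$ automatically.
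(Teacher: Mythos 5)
Your overall reduction (congruent sections have equal volume, so Theorem \ref{teoa} finishes once $B=-B$, and $B=-B$ follows if every section $B_u$ is symmetric about the origin) matches the paper's strategy in spirit, but the two steps you leave open are exactly the hard content, and your proposed mechanisms for them do not work. The canonical-point idea gives, for even $n$, a single direction $u_0$ where the chosen centre (centroid, Steiner point, circumcenter, \dots) of $B_{u_0}$ sits at the origin; but a convex body whose circumcenter or centroid is at the origin need not be centrally symmetric at all, and nothing forces the conclusion at other directions $u$. The step ``I would hope to deduce that every canonical centre of every $B_u$ equals the origin, which combined with the rigidity of $K$ would give $B_u=-B_u$'' has no actual mechanism behind it: central symmetry of $K$ is not a consequence of canonical points coinciding with the origin, and the isometry group of a generic non-symmetric $K$ is trivial, so ``$\mathrm{Isom}(K)$ permutes the canonical centres'' yields nothing. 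For odd $n$ you explicitly have no argument, only the suggestion that Funk--Hecke or Borsuk--Ulam ``likely'' suffices.

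The paper fills these gaps with two specific inputs that your proposal lacks. First, Theorem \ref{teom}: the family $u\mapsto u+(u^\perp\cap B)$ is a \emph{complete turning} of the common section $K$ (it satisfies $\kappa(u)-u=\kappa(-u)+u$), and the topological obstruction --- extending the tangent field $\Phi(x)(x_0)$ over an annulus to produce a nowhere-zero complete turning of a vector field, contradicting the nonexistence of a section of the canonical bundle of $n$-subspaces of $\R^{n+1}$ --- forces $K$ to be centrally symmetric for \emph{every} $n$, not just even $n$; this is strictly stronger than the hairy-ball input you invoke and is precisely what your even/odd split cannot recover. Second, knowing each section is centrally symmetric about \emph{some} point is far from knowing that point is the origin; the passage from the former to the latter is Larman's False Center Theorem \ref{teol}, a deep result which concludes that either $B$ is symmetric about the origin (then Theorem \ref{teoa} applies, as in your plan) or $B$ is an ellipsoid (handled by a direct computation). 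Without a substitute for the complete-turning obstruction and for the False Center Theorem, your outline does not constitute a proof.
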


This time, the hypothesis of symmetry is not necessary.  The theorem was proved by S\"uss \cite{Su} for $n=2$. In 1970, using  topological ideas, Mani \cite{Ma} proved it for n=even and in 1979, Burton \cite{B} proved it for $n=3$. Finally, Rolf Schneider \cite{Sch} in 1980, using analysis, proves it in general.  In 1990, using the topological ideas of Hadwiger and Gromov, Montejano \cite{M1} proved the following result which, together with the false center theorem, allows an alternative proof of Schneider's theorem to be given.

\smallskip

\begin{teo}\label{teom}
If all hyperplane sections through the origin of a convex body $B$ are affinely equivalent,    
 then every hyperplane section of $B$ is centrally symmetric.
\end{teo}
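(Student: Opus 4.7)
I argue by contradiction, aiming at a topological obstruction on $\mathbb{RP}^n$. Suppose some (hence, by affine equivalence, every) central section $K_y := B \cap y^\perp$ fails to be centrally symmetric. Since $K_y = K_{-y}$ as sets, the family $\{K_y\}$ is naturally parametrized by $\mathbb{RP}^n$, and any affinely equivariant construction applied to it inherits this antipodal symmetry.

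The topological input is the bundle $\gamma^\perp \to \mathbb{RP}^n$, the orthogonal complement in $\underline{\mathbb{R}^{n+1}}$ of the tautological line bundle $\gamma$; its fiber over $[y]$ is $y^\perp$. From $\gamma \oplus \gamma^\perp = \underline{\mathbb{R}^{n+1}}$ and $w(\gamma) = 1+a$, one computes $w(\gamma^\perp) = (1+a)^{-1} = 1 + a + \cdots + a^n$ in $H^*(\mathbb{RP}^n;\mathbb{Z}/2)$, so the top Stiefel--Whitney class $w_n(\gamma^\perp) = a^n$ is nonzero; hence $\gamma^\perp$ admits no nowhere-vanishing continuous section.

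The heart of the proof is then to construct a continuous, affinely equivariant ``asymmetry invariant'' $v$ that assigns to each convex body $K \subset \mathbb{R}^n$ a vector $v(K) \in \mathbb{R}^n$, with $v(K) = 0$ iff $K$ is centrally symmetric. Applying $v$ to the family produces a continuous section $[y] \mapsto v(K_y) \in y^\perp$ of $\gamma^\perp$, nowhere vanishing by affine equivariance together with the assumed non-symmetry of some (hence every) $K_y$. This contradicts the obstruction above, so every central section is centrally symmetric. Combined with the false center theorem, $B$ is then an ellipsoid, and consequently every hyperplane section of $B$---not only those through the origin---is centrally symmetric.

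\textbf{Main obstacle.} The delicate point is the construction of $v$. Naive candidates---for instance the displacement between the centroid and the Santaló point of $K$---fail for convex bodies whose affine symmetry group acts irreducibly on $\mathbb{R}^n$ (e.g., the regular simplex), because Schur's lemma then forces every $\mathrm{GL}$-equivariant vector attached to $K$ to vanish. The natural remedy is to leave the vector realm and use the lowest-order non-vanishing odd moment tensor $M_k(K_0) \in \mathrm{Sym}^k\mathbb{R}^n$, with $k$ minimal odd so that $M_k(K_0) \neq 0$---existence follows from moment-determinacy of compactly supported measures, which forces all odd moments to vanish precisely when $K$ is centrally symmetric. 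The Stiefel--Whitney argument must then be lifted to the bundle $\mathrm{Sym}^k\gamma^\perp$ over $\mathbb{RP}^n$, and one must exploit the $\mathrm{GL}(n)$-orbit structure of $M_k(K_0)$ rather than mere non-vanishing in order to still extract a topological contradiction. Carrying this refinement out properly is where I expect the main technical work to lie.
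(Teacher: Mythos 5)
Your topological target is the same one the paper uses—an antipodally compatible, nowhere-vanishing tangent field on $\mathbb{S}^n$ is a nowhere-zero section of the canonical $n$-plane bundle $\gamma^\perp$ over $\mathbb{R}P^n$, which does not exist—but the proposal is missing its central step: the section is never constructed. You correctly identify the obstacle (for a body like the regular simplex, whose symmetry group acts irreducibly, Schur's lemma forces any equivariant vector-valued asymmetry invariant to vanish; and equivariance under at least $SO_n$ is unavoidable if you want to apply the invariant fiberwise in a nontrivial bundle), but the fallback you sketch does not repair it. If you pass to the lowest nonzero odd moment tensor $M_k(K_0)\in\mathrm{Sym}^k\mathbb{R}^n$ with $k\geq 3$, the relevant bundle $\mathrm{Sym}^k\gamma^\perp$ has rank $\binom{n+k-1}{k}>n=\dim\mathbb{R}P^n$, and a vector bundle of rank exceeding the dimension of the base always admits a nowhere-vanishing section (all obstructions lie in $H^{i+1}(\mathbb{R}P^n;\pi_i(S^{r-1}))$ with $i\leq n-1<r-1$), so non-vanishing of the moment section yields no contradiction; moreover the minimal odd degree $k$ can jump under Hausdorff limits, so the assignment is not even obviously continuous. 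The phrase ``exploit the $\mathrm{GL}(n)$-orbit structure'' is precisely the unproven content, so as it stands the argument has a genuine gap at its heart. (A minor additional slip: the false center theorem gives a dichotomy—ellipsoid, or $B$ symmetric about the origin—so it does not let you conclude that $B$ is an ellipsoid, nor that off-center sections are symmetric; the theorem as used in the paper concerns sections through the origin.)

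The paper circumvents exactly this difficulty by never asking for a canonical equivariant invariant. After normalizing with the minimal-volume ellipsoid (which converts affine equivalence of the sections into congruence), the field $\kappa(u)=u+(u^\perp\cap B)$ is trivialized over the closed hemisphere, giving a continuous $\Phi:\mathbb{B}^n_+\to SO_n$ with $\Phi(x)(K)=\kappa(x)-x$; if $K$ is not symmetric one fixes a single boundary point $x_0$ with $-x_0\notin K$ and uses the tangent field $x\mapsto\Phi(x)(x_0)$. The complete-turning identity $\kappa(u)-u=\kappa(-u)+u$ on the equator shows $\Phi(-u)^{-1}\Phi(u)\in G_K$, whence $\Phi(u)(x_0)\neq-\Phi(-u)(x_0)$, and this ``never antipodal'' property allows an unambiguous rotation over a small collar making the field antipodally compatible on the boundary; the result is a nowhere-zero section of the canonical bundle of $n$-subspaces of $\mathbb{R}^{n+1}$, the desired contradiction. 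In other words, a non-canonical hemisphere trivialization plus one marked boundary point replaces the equivariant asymmetry vector that, as you observed, cannot exist; supplying an argument of this kind (or a genuinely new one) is what your proposal still needs.
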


The proof  of Theorem \ref{teom} uses topological ideas. Indeed, it uses the notion of field of convex bodies introduced by Hadwiger and developed by Mani in \cite{Ma}. 

\smallskip

\subsection{Fields of convex bodies}

Let $\bf K^n$ be the space of all compact convex sets in $\mathbb R^n$ with the Hausdorff metric topology.

A \emph{field of convex bodies tangent to} $\mathbb S^n$ is a continuous function
$$\kappa: \mathbb S^n\to \bf K^{n+1},$$
such that $\kappa(u)\subset u+u^\perp\subset \mathbb R^{n+1},$ for every $u\in\mathbb S^n$, where $u^\perp$ denotes the subspace of $R^{n+1}$ orthogonal to $u$. 

If in addition, $\kappa(u)$
 is congruent (affinely equivalent) to the convex body $K\subset \mathbb R^n$, for every $u\in\mathbb S^n$, then we obtain a \emph{field of convex bodies tangent to} $\mathbb S^n$ and congruent (affinely equivalent) to $K$. If in addition, $\kappa(u)-u=\kappa(-u)+u$, then we have a \emph{complete turning} of $K$ in  $\mathbb R^{n+1}.$

If all hyperplane sections through the origin of a convex body $B$ are congruent (affinely equivalent), then there is a field  convex bodies tangent to $\mathbb S^n$ and congruent (affinely equivalent) to $\mathbb R^{n}\cap B$ 
$$\kappa: \mathbb S^n\to \bf K^{n+1},$$
defined as follows: 
$$\kappa(u)=u+(u^\perp\cap B), \quad \mbox { for every } u\in\mathbb S^n.$$
Obviously, this field is a complete turning because $\kappa(u)-u=\kappa(-u)+u$, for every  $u\in \mathbb S^n$.

\smallskip

Note that given a field of convex bodies $\kappa: \mathbb S^n\to \bf K^{n+1}$ tangent to $\mathbb S^n$ and congruent to $K$, we may always assume without loss of generality that, for every $u\in\mathbb S^n$, the circumcenter of $\kappa(u)$ is the point $u\in (u+u^\perp)$.
%%%%%%%%%%%%%%%%%%%%%%%%%%%%%%%%%%%%%%%%%%%%%%%%%%%%%%%%%%%%%%%%%%%%%%%%%%%%%%%%%%%%%%%%%%%%%%%%%%%%%%%%%%%%%%%%%%%%%%%%%%%%%%%%%%%%%%%%%%%%%%%%%%%%%%%%%%%%%%%%%%%%%%%%

\bigskip 

The link between Hadwiger's notion of field of convex bodies and the topology of Lie groups traces back to the work of Steenrod \cite{St} and Gromov \cite{G}. Every vector bundle $\xi:E\to\mathbb S^n$ with base the sphere $\mathbb S^n$, fiber $\R^m$ and structure group $GL(m,\R)$, can be obtained from 
$\mathbb B^n_+ \times \R^m$ disjoint union $\mathbb B^n_- \times \R^m$ by gluing the first copy $\mathbb S^{n-1}\times \R^m\subset \mathbb B^n_+ \times \R^m$ with the second copy 
$\mathbb S^{n-1}\times \R^m \subset \mathbb B^n_- \times \R^m$ via a fiber preserving homeomorphism 
\bigskip
$$S^{n-1}\times \R^m\to S^{n-1}\times \R^m$$
that glue every fiber $\{x\}\times \R^m$ with the fiber $\{x\}\times \R^m$ using an element $g_x \in GL(m,\R)$,
where  $\mathbb B^n_+$ and  $\mathbb B^n_-$ are respectively, the north and south hemisphere of $\mathbb S^n$.  \quad The map
$g:\mathbb S^{n-1}\to GL(m,\R)$ given by $g(x)=g_x$, is called the \emph{characteristic map} of the vector bundle $\xi$.
It is not difficult to see that two vector bundles are equivalent if and only if their corresponding characteristic maps are homotopic. 

The existence of a field of convex bodies tangent to $\mathbb S^n$ and congruent to $K$ implies that the tangent bundle 
T$\mathbb S^n$ can be obtained gluing the copies $\mathbb B^n_+ \times \R^m$ and $\mathbb B^n_- \times \R^m$ using only isometries  that fixes $K$.  In other words:

\medskip

\noindent \emph{ There exist a field of convex bodies tangent to $\mathbb S^n$ and congruent to $K$ if and only if 
the characteristic map }
\bigskip
$$
\begin{tikzcd}[column sep=small]
\mathbb S^{n-1}\arrow[dr,"f",swap] \arrow[rr, "\chi_n"] & &SO_n \\
&G_K\arrow[ur,"i",swap]&
\end{tikzcd}
$$
\emph{factorizes through } $G_K=\{g\in SO_n\mid g(K)=K\}$.
 If this is so, then we say that the structure group of T$\mathbb S^n$ \emph{reduces to } $G_K$. 

\bigskip

The main idea in the proof of Theorem \ref{teom}, is that a complete turning of $K$ is only possible if $K$ has a center of symmetry (indeed, if $n=3,7$, the fact that the tangent bundle of $\mathbb S^3$ and $\mathbb S^7$ is parallelizable implies that a complete turning of $K$ is possible if and only if $K$ has a center of symmetry).

\bigskip

Since vector bundles over contractible spaces are trivial, we are going to take advantage of the existence of the field of convex bodies 
$\kappa: \mathbb S^n\to \bf K^{n+1},$ tangent to the sphere $\mathbb S^n$ and congruent to $K$, to construct a continuous map
$$\Phi: \mathbb B^n_+\to SO_n,$$
such that $\Phi(x)(K)=\kappa(x)$, for every  $x\in \mathbb B^n_+.$

\medskip

Suppose $K$ is not symmetric. We may assume without generality that there is a point $x_0$ in the boundary of $K$ such that $-x_0\notin K$
and hence for every $g\in SO(n)$, $g(x_0)\not=-x_0$.

Note that 
$$\{\Phi(x)(x_0)\}$$
is a field of vectors tangent to $\mathbb B^n_+$.  Furthermore, for every $u\in \mathbb S^{n-1}$, we have that $\Phi(u)(x_0)\not=-\Phi(-u)(x_0).$
We are going to add a small annulus to $B^n_+$ at the boundary, to obtain a larger $n$-dimensional ball $\tilde B^n$ and we are going to take advantage of this annulus  to define on it a tangent vector field that coincides with the one we have in $B^n$ and with an additional property.
The idea is that for every point $u \in \mathbb S^{n-1}$, we will use the annulus to rotate from the vector $\Phi(u)(x_0)$ towards the vector $\Phi(-u)(x_0)$. Since $\Phi(u)(x_0)\not=-\Phi(-u)(x_0),$ we can do this unambiguously, in such a way that at the end on the border of $\tilde B^n$, the tangent vector at the point $u\in\partial\tilde B^n$, coincides with the tangent vector at the point $-u\in\partial \tilde B^n$.  Using this procedure, we obtain a complete turning of a nonzero vector field in the sphere $\mathbb S^{n}$, which is a contradiction to the well known result that there is not a section to the canonical vector bundle of $n$-subspaces in $\mathbb R^{n+1}.$  See \cite{St}.

Suppose $K_1, K_2$ are convex bodies who have as ellipsoid of minimal volume containing them the unit ball.  It is easy to see that if $K_1$ and $K_2$ are affinely equivalent, then they are actually congruent.  Suppose, now $K\subset \mathbb R^n$ is a convex body with the unit ball as a ellipsoid of minimal volume containing it, and let $\kappa: \mathbb S^n\to \bf K^{n+1}$
 be a field of convex bodies tangent to $\mathbb S^n$ and affinely equivalent to the convex body $K\subset \mathbb R^n$, then there is a field of convex bodies tangent to $\mathbb S^n$ congruent to $K$.  For every $x\in \mathbb S^n$, let $E_x\subset x+x^\perp$ be the ellipsoid of minimal volume containing $\kappa(x)$ and let $h_x$ be the affine map that translates and dilates the principal axes of $E_x$ to obtain the unit ball. It is easy to observe that the affine map $h_x$ varies continuously with $x$.  Hence $\kappa': \mathbb S^n\to \bf K^{n+1}$, given by $\kappa'(x)=h_x(\kappa(x))$, is a   field of convex bodies tangent to $\mathbb S^n$ congruent to $K$.  By all the above, \emph{if $\kappa: \mathbb S^n\to \bf K^{n+1}$ field of convex bodies tangent to $\mathbb S^n$ and affinely equivalent, then for every $x\in \mathbb S^n$, $\kappa(x)$ is symmetric.}
 
 \bigskip
 
\subsection{The proof of Schneider's theorem and similar sections} 
Summarizing, Theorem  \ref{teom} is true because a complete turning of $K$ is only possible if $K$ has a center of symmetry. 
This result, in combination with the beautiful Larman's False Center theorem \cite{L}, give rise to a topological proof of Schneider`s theorem.

\begin{teo}[Larman`s False Center theorem]\label{teol}
If all hyperplane sections through the origin of a convex body $B$ have a center of symmetry,   
 then either $B$ is an ellipsoid or $B$ is symmetric with respect the origin.
\end{teo}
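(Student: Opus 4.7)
The plan is to define a center-of-section map and reduce to a dichotomy. For each $u\in\mathbb{S}^n$ the section $B\cap u^\perp$ is a convex body in the hyperplane $u^\perp$ with a necessarily unique center of symmetry (uniqueness holds because a compact convex body cannot satisfy $K=K+v$ for any $v\neq 0$), so one sets
\[
c\colon\mathbb{S}^n\to\mathbb{R}^{n+1},\qquad c(u)=\text{center of symmetry of }B\cap u^\perp,
\]
with $c(u)\in u^\perp$ and $c(-u)=c(u)$. Since $B\cap u^\perp$ varies continuously with $u$ in the Hausdorff metric and the center of a centrally symmetric convex body depends continuously on the body, $c$ is continuous. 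The conclusion of the theorem is then equivalent to a dichotomy: either $c\equiv 0$, in which case $B=-B$, or $c\not\equiv 0$, in which case $B$ must be an ellipsoid.

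The origin-symmetric branch is immediate. If $c\equiv 0$ then every central hyperplane section of $B$ is symmetric about $0$, and given any $x\in B$ with $x\neq 0$ one picks a unit vector $u$ with $\langle u,x\rangle=0$ (possible since $n+1\geq 3$), so that $x\in B\cap u^\perp$ yields $-x\in B\cap u^\perp\subset B$; hence $B=-B$.

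The ellipsoid branch carries the substantive content of Larman's theorem. The idea is to exploit the compatibility forced on the values of $c$ by the fact that nearby hyperplane sections share large chords. For distinct $u,v\in\mathbb{S}^n$ the chord $B\cap u^\perp\cap v^\perp$ lies inside both $B\cap u^\perp$ and $B\cap v^\perp$, and central symmetry of each forces the midpoint data along this chord to be simultaneously compatible with $c(u)$ and with $c(v)$. Propagating these constraints through the pencil of hyperplanes containing a fixed codimension-two subspace yields that every $2$-dimensional central section of $B$ has its chord-bisection data accounted for by two different candidate centers; a classical projective-geometric argument then identifies every such planar section as an ellipse. A theorem of Blaschke (a convex body all of whose $2$-dimensional sections through a fixed interior point are ellipses is itself an ellipsoid) finally upgrades this to the global statement that $B$ is an ellipsoid.

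The main obstacle is precisely this rigidity step, which converts ``some center on every hyperplane section'' into ``every planar section is an ellipse.'' A priori the continuous map $c$ can trace out a fairly arbitrary locus inside $B$, and many degenerate configurations must be ruled out before the quadric structure becomes inescapable. Larman's original argument handles this through delicate casework in low dimensions together with an inductive reduction, and it is here that the genuine depth of the false-center theorem lies; the topological methods emphasised elsewhere in the paper do not seem to shorten this step, since no obvious homotopy-theoretic obstruction prevents $c$ from taking nontrivial values without $B$ being a quadric.
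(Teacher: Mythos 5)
Your preliminary reductions are fine: the center of symmetry of each section $B\cap u^\perp$ is unique, the map $c$ is well defined, even ($c(-u)=c(u)$) and continuous, and the branch $c\equiv 0\Rightarrow B=-B$ is correctly argued. But the entire weight of the theorem sits in the other branch, and there you do not give a proof. ``Propagating these constraints through the pencil of hyperplanes'' and ``a classical projective-geometric argument then identifies every such planar section as an ellipse'' are placeholders, not arguments; as phrased, the intermediate claim is even suspect, since a planar convex body cannot be centrally symmetric about two distinct points, so ``chord-bisection data accounted for by two different candidate centers'' either collapses to a single center or must mean something you have not formulated. Nothing in the text explains how a nonzero value of $c$ forces two-dimensional central sections to be ellipses; the concluding appeal to Blaschke's characterization (all $2$-sections through an interior point ellipses $\Rightarrow$ ellipsoid) is a legitimate endgame, but the bridge to it is precisely the false-centre theorem, and you explicitly defer it to ``Larman's original argument.'' So, judged as a proof, the proposal has a genuine gap: the core implication (some hyperplane section through the origin has its center off the origin $\Rightarrow$ $B$ is an ellipsoid) is assumed rather than established.

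For comparison with the paper: the paper does not prove Theorem \ref{teol} at all. It states it as Larman's theorem, cites \cite{L}, and uses it as a black box (together with Theorems \ref{teom} and \ref{teoa}) in the proofs of Theorems \ref{teosch} and \ref{teom1}. If your intention was likewise to quote the result from the literature, your treatment is consistent with the paper's usage, but it should then be presented as a citation accompanied by the (easy) remarks on the center map, not as a proof. If the intention was a self-contained argument, what is missing is the substance of the theorem: Larman's proof (completing the earlier work of Aitchison, Petty and Rogers on the false-centre problem) is a long and delicate case analysis, and neither it nor any substitute for it appears in your text; no topological shortcut of the kind used elsewhere in the paper is known to replace it, as you yourself observe.
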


The  proof of Schneider's theorem \ref{teosch} goes as follows. If all hyperplane sections through the origin of $K$ are congruent, by Theorem \ref{teom},  
 then every hyperplane section through the origin is centrally symmetric. By Larman's False Center Theorem \ref{teol}, either $B$ is symmetric with center the origin and Alexandrov`s Theorem \ref{teoa} implies that $B$ is a ball centered at the origin, or $B$ is an ellipsoid in which case it  is easy to see directly that $B$ is again a ball centered at the origin.
 
 \smallskip 
 
 The third meaning of equal is similarity. If $B$ is a $(n+1)$-ball with the origin as an interior point but not necessarily centered  at the origin, then all hyperplane sections of $B$ through the origin are $n$-balls and hence all are similar. Our next theorem states that this is always the case.
 
 \begin{teo}[Montejano]\label{teom1}
If all hyperplane sections through the origin of a convex body $B$ are similar, 
 then the convex body $B$ is an $n$-ball not necessarily centered at the origin.
\end{teo}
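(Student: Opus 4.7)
Since similar convex bodies are in particular affinely equivalent, Theorem \ref{teom} yields that every hyperplane section of $B$ through the origin is centrally symmetric, and Larman's False Center Theorem (Theorem \ref{teol}) then splits the analysis into two cases: either $B$ is an ellipsoid, or $B$ is symmetric about the origin.

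In the ellipsoid case, write $B=\{x:\langle x-c,M(x-c)\rangle\le 1\}$ with $M$ positive definite and $c\in\mathbb{R}^{n+1}$. The shape of a central section $u^{\perp}\cap B$ is governed solely by the restricted quadratic form $M|_{u^{\perp}}$, the translation terms merely shifting the sub-ellipsoid's center. Requiring all these sub-ellipsoids to be pairwise similar forces the eigenvalue ratios of $M|_{u^{\perp}}$ to be independent of $u$, and by comparing restrictions to the coordinate hyperplanes of an eigenbasis of $M$ (each of which suppresses a different eigenvalue), one pins $M$ down to a scalar multiple of the identity. Hence $B$ is a round ball, possibly not centered at the origin.

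In the origin-symmetric case, write $u^{\perp}\cap B=\lambda(u)\,R_{u}K$ with $K\subset\mathbb{R}^{n}$ the common origin-symmetric shape, $\lambda:\mathbb{S}^{n}\to\mathbb{R}_{+}$ continuous and even, and $R_{u}$ an isometry $\mathbb{R}^{n}\to u^{\perp}$. The plan is to show that $K$ itself is a ball; once this is established, every central section of $B$ is an origin-centered ball (by the symmetry of $B$), hence the radial function $\rho_{B}$ is constant on every great $(n-1)$-sphere of $\mathbb{S}^{n}$, and since any two points of $\mathbb{S}^{n}$ lie on a common such sphere, $\rho_{B}$ is constant and $B$ is a ball. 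I would show $K$ is a ball by induction on dimension. For the base case $n=2$, rescale the field $\kappa(u)=u+(u^{\perp}\cap B)$ by $1/\lambda(u)$ about $u$ to obtain a tangent field $\tilde\kappa$ of convex bodies congruent to $K$. By the structure group reduction described after Theorem \ref{teom}, the characteristic map $\chi_{2}:\mathbb{S}^{1}\to SO_{2}$ of $T\mathbb{S}^{2}$ must factor through $G_{K}\le SO_{2}$; but $\chi_{2}$ has degree $2$ (the Euler characteristic of $\mathbb{S}^{2}$), while every proper closed subgroup of $SO_{2}$ is finite cyclic and admits only null-homotopic maps from $\mathbb{S}^{1}$. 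Thus $G_{K}=SO_{2}$ and $K$ is a disk. For the inductive step ($n\ge 3$), I would use a common-section argument: for orthogonal $u,v\in\mathbb{S}^{n}$, the $(n-1)$-body $u^{\perp}\cap v^{\perp}\cap B$ is simultaneously $\lambda(u)(H_{u}\cap K)$ and $\lambda(v)(H_{v}\cap K)$ for specific central hyperplanes $H_{u},H_{v}\subset\mathbb{R}^{n}$, linking $H_{u}\cap K$ and $H_{v}\cap K$ by similarity; propagating these links as $(u,v)$ varies over orthonormal pairs should show that $K$ itself has all central hyperplane sections pairwise similar, triggering the inductive hypothesis.

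The main obstacle is the origin-symmetric case, and within it the propagation step of the inductive argument: the common-section identity directly yields similarity only between those pairs of hyperplane sections of $K$ parameterized by orthonormal $(u,v)\in\mathbb{S}^{n}\times\mathbb{S}^{n}$, and one must use the continuous dependence of $R_{u}$ on $u$ to saturate the full Grassmannian of central hyperplanes of $\mathbb{R}^{n}$. The base case $n=2$ is cleanly handled by the nontrivial Euler class of $T\mathbb{S}^{2}$, but the parallelizable spheres $\mathbb{S}^{3}$ and $\mathbb{S}^{7}$ offer no analogous topological shortcut, so the common-section analysis is essential for the induction to close at higher dimensions.
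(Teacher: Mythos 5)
Your reduction (similar $\Rightarrow$ affinely equivalent, then Theorem \ref{teom} plus Larman's Theorem \ref{teol} splitting into the ellipsoid case and the origin-symmetric case) and your treatment of the ellipsoid case follow the paper, and your base case $n=2$ in the symmetric case (rescaling to a congruent field and using that the clutching map of $T\mathbb{S}^2$ has degree $2$, while maps of $\mathbb{S}^1$ into a proper closed subgroup of $SO_2$ are null-homotopic) is correct. But the symmetric case for $n\geq 3$ — which you yourself flag as the main obstacle — contains a genuine gap, and it is exactly the point where the paper does something different. Your common-section identity only says: for each orthonormal pair $(u,v)$, the two hyperplane sections $R_u^{-1}(u^\perp\cap v^\perp)\cap K$ and $R_v^{-1}(u^\perp\cap v^\perp)\cap K$ are similar. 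The identification $R_u$ is defined only up to the symmetry group of $K$, and a globally continuous choice of $u\mapsto R_u$ on $\mathbb{S}^n$ does not exist in general — the impossibility of such a choice is precisely the topological obstruction this whole theory exploits — so "continuous dependence of $R_u$ on $u$" is not available as a tool; only hemisphere-wise (local) choices are. Even granting local continuity, the relation you obtain pairs each central hyperplane section of $K$ with one unspecified partner section, and no argument is given that these sporadic similarities generate the similarity of \emph{all} central hyperplane sections of $K$, which is what the inductive hypothesis needs. This is not a routine verification: the analogous "all sections of the sections are alike" statements in the paper (e.g.\ Theorem \ref{thmAn}) require a separate, nontrivial topological argument, not a naive propagation.

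For comparison, the paper closes the symmetric case differently: it uses a topological argument to show that the similar sections are in fact \emph{congruent} (i.e.\ the similarity ratio is constant), and then applies Schneider's Theorem \ref{teosch} (itself resting on Theorem \ref{teom}, Theorem \ref{teol} and Theorem \ref{teoa}) to conclude that $B$ is a ball. Your proposal avoids Schneider's theorem and would be a genuinely different, inductive route if the propagation step were established, but as written that step is asserted ("should show"), not proved, so the symmetric case for $n\geq 3$ remains open in your argument.
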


A sketch of the proof is the following. Since similarities are affine equivalences, by Theorem \ref{teom}, all hyperplane sections of $B$ through the origin have a center of symmetry.  By Larman`s False Center Theorem \ref{teol}, either the origin is the center of symmetry of $B$ or $B$ is an ellipsoid.  Using a topological argument it is possible to prove that in the first case all hyperplane sections of $B$ through the origin are 
not only similar but actually 
congruent and  hence by Schneider's theorem \ref{teosch}, $B$ is a ball or in the second case, if $B$ is an ellipsoid, it is easy to directly verify that our hypothesis implies that $B$ it is actually a ball.

\bigskip
\bigskip

  \section{Affinely equivalent sections and the Banach conjecture}

The forth meaning of equal is affinely equivalence. 

\begin{conj}\label{conj1}
If all hyperplane sections through the origin of a  convex body $B$ are affinely equivalent, 
 then the convex body $B$ is an ellipsoid.
\end{conj}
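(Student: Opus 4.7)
My plan is to parallel the topological proof of Schneider's Theorem \ref{teosch} sketched above, upgrading each ingredient from ``congruent'' to ``affinely equivalent.'' First, since congruences are a special case of affine equivalences, Theorem \ref{teom} applies at once and gives that every central hyperplane section of $B$ is centrally symmetric. Larman's False Center Theorem \ref{teol} then produces the familiar dichotomy: either $B$ is an ellipsoid, in which case the conjecture holds, or $B$ is itself centrally symmetric about the origin. Thus the entire conjecture is equivalent to its statement for centrally symmetric bodies.

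In the symmetric case, the central sections assemble into a complete turning $\kappa \colon \mathbb{S}^n \to {\bf K}^{n+1}$ with each $\kappa(u)$ affinely equivalent to the fixed convex body $K = B \cap \mathbb R^n$. Applying pointwise the canonical affine map sending the minimal volume circumscribing ellipsoid of $\kappa(u)$ to the Euclidean unit ball in $u + u^\perp$ --- a procedure already used in the paper and seen to depend continuously on $u$ --- converts $\kappa$ into a field congruent to $K$, and therefore produces a reduction of the structure group of $T\mathbb{S}^n$ to $G_K = \{g \in SO_n \mid g(K) = K\}$. The aim is now to show that this reduction, being compatible with a complete turning of a centrally symmetric $K$, is only possible when $K$ is itself an ellipsoid; once this is known, the classical theorem that a convex body all of whose central hyperplane sections are ellipsoids is itself an ellipsoid finishes the proof applied to $B$.

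The main obstacle is the topological rigidity required in the second step. For even $n$, the non-vanishing Euler class of $T\mathbb{S}^n$ severely restricts $G_K$, and a Gromov-style obstruction argument, akin to the non-zero tangent vector field trick used in the proof of Theorem \ref{teom}, should suffice to force $K$ to be ellipsoidal. For odd $n$, however, and dramatically in the parallelizable dimensions $n=3$ and $n=7$, the classical characteristic classes impose no restriction whatsoever on $G_K$; one is forced to extract finer information from the complete turning hypothesis $\kappa(u)-u=\kappa(-u)+u$, possibly through the representation theory of the candidate closed subgroups of $SO_n$ or through equivariant invariants of the associated sphere bundles. It is precisely this odd-dimensional topological slack that has kept the Banach conjecture open, and I expect the verification that no proper closed $G_K \leq SO_n$ is admissible for a symmetric non-ellipsoidal $K$ to be the real battleground of any attempted proof.
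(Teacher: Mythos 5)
Your opening reductions do match the paper: Theorem \ref{teom} plus Larman's False Center Theorem \ref{teol} reduce everything to the centrally symmetric case, the minimal-ellipsoid normalization turns the affine field into a congruent one, and the reduction of the structure group of $T\mathbb S^n$ to $G_K\subset SO_n$, together with the degree argument on the evaluation map for even $n$, is exactly how the paper disposes of $n$ even. But from that point on your plan diverges from what actually works, and the divergence is where all the content lies. You propose to show that \emph{no} proper closed subgroup $G_K\le SO_n$ is admissible for a symmetric non-ellipsoidal $K$. The paper does not, and cannot, do this: the reducible case $G_K$ conjugate to $SO_{n-1}$ is \emph{not} excluded by any bundle-theoretic obstruction, and the paper's route for $n\equiv 1$ mod $4$ is instead (i) a classification step in the irreducible case (Leonard's simplicity result, the Cadek--Crabb bound $\dim G\ge 2n-3$, the parity argument for Spin representations, and the exceptional-group table, which is also where the exceptions $n=5$ and $n=133$ arise, the former needing the separate Lemma \ref{lemG} via the Dynkin index), concluding only that $G_K=SO_n$ or that $K$ is an affine body of revolution; and then (ii) an entirely separate convex-geometric/topological argument to handle the body-of-revolution case: the axes of revolution $L_x$ of the sections form a continuous line field tangent to $\mathbb S^n$ (Lemma \ref{lema:cont}), the induced map $\psi$ is surjective, and Lemma \ref{lemU} then yields a contradiction unless some section is an ellipsoid (Theorem \ref{thmAn}); affine equivalence of all sections then makes every section, and hence $B$, an ellipsoid. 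None of this second half appears in your proposal, and without it the plan stalls exactly where you say you ``expect'' the battle to be.

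There is also a more basic issue of scope: the statement you are asked to prove is Conjecture \ref{conj1}, which is equivalent to the real Banach conjecture and is still open; the paper itself establishes it only for $n\equiv 0,1,2$ mod $4$, $n\ne 133$ (Theorem \ref{teomain}), with $n\equiv 3$ mod $4$ untouched (note that $n=3,7$ are not merely ``parallelizable slack'': no argument of any kind is currently known there). Your honest admission of this at the end is appropriate, but it means the proposal is an outline of the known reductions plus the even case, not a proof, and its guiding heuristic for odd $n$ (rule out every proper subgroup) misdescribes the strategy that succeeds in the cases that are actually settled.
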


\bigskip 

It turns out that Conjecture \ref{conj1} is equivalent to the Banach Conjecture over the reals.

\bigskip

\subsection{The Banach conjecture} In 1932, in his book \cite{Ba},
Stephan Banach asked   the following question: 

\sn

{\em Let $V$ be a Banach space, real or complex, finite or infinite dimensional, all of whose $n$-dimensional subspaces, for some fixed integer $n$, $2\leq n<\dim(V)$, are isometric to each other. Is it true that $V$ is a Hilbert space?}

\ms

This conjecture  was proved first  for $n=2$  and  real $V$   in 1935 by Auerbach, Mazur and Ulam  \cite{AMU} and in 1959 for all $n\geq 2$ and infinite dimensional real $V$ by A.~Dvoretzky \cite{Dv}. In 1967, M.~Gromov \cite{G} proved  the conjecture for  even $n$ and all $V$, real or complex,  for odd $n$  and  real $V$  with $\dim(V)\geq n+2$, and for odd $n$ and  complex $V$ with $\dim(V)\geq 2n$.  V.~Milman \cite{Mi} extended Dvoretzky's theorem to the complex case, in particular reproving Banach's conjecture for infinite dimensional complex space $V$.
 Recently, in 2021,  Bor, Hern\'andez-Lamoneda, Jim\'enez-Desantiago and Montejano \cite{BHJM} proved the Banach Conjecture if $V$ is real and $n\equiv 1$ mod $4$, with the possible exemption of $n=133$ and a little later,  Bracho and Montejano \cite{BM} proved the Banach Conjecture if $V$ is complex and $n\equiv 1$ mod $4$. A thorough account of the history of this conjecture is found  in the notes on \S9 in \cite[p.~206]{MMO}. 
We also recommend \cite{Pe}.

\medskip

Our next purpose is to prove that the Banach conjecture over the reals is equivalent to Conjecture \ref{conj1}.
First note that Banach's conjecture is a codimension one problem: since every Banach space, all of whose subspaces of a fixed dimension $n\geq2$ are Hilbert spaces, is itself a Hilbert space, which easily follows from the elementary characterization of a norm coming from an inner product via the ``parallelogram law", an affirmative answer for $n$ in codimension one implies immediately an affirmative answer for $n$ in all codimensions. 

Note next that two Banach spaces $V_1$ and $V_2$ are isometric if there is a linear isomorphism
$f:V_1\to V_2$ that preserves the norm. That is, two Banach spaces $V_1$ and $V_2$ are isometric if their unit balls are linearly equivalent.
To conclude, note that a finite dimensional Banach space $V$ is a Hilbert space if and only if $V$ is isometric to the Euclidean space, that is,
if and only if its unit ball is an ellipsoid.

Finally, in the solution of Conjecture \ref{conj1}, we may always assume that not only $B$ but all hyperplane sections of $B$ through the origin have as a center of symmetry the origin.
This is so, because by Theorem \ref{teom} every section of $B$ has a center of symmetry and therefore by Larman`s False Center Theorem \ref{teol} 
either $B$ is an ellipsoid or the origin is the center of $B$. 
\bigskip

\subsection{Topology of Lie groups} From now on, until the end of this section, suppose $B$ is a convex body with the property that all its hyperplane sections through the origin  are affinely equivalent. Our first interest is to answer the following question: 

\smallskip

\centerline{\emph{ What can we say about the sections of $B$?}}

\smallskip

For example, due to the Theorem \ref{teom}, we know that all these sections have a center of symmetry 
but, do these sections share some other property?

  Choose a convex set $K\subset \mathbb R^n$ affinely equivalent to all hyperplane sections of $B$ through the origin with the additional property that 
  the ellipsoid of minimal volume containing $K$ is the unit $n$-ball.  Define $G$ as the group of symmetries of $K$, that is, $G$ is the  subgroup of linear isomorphism in $GL(n, \mathbb R)$  keeping fixed $K$ and with positive determinant.  Note that every element of $G$ fixes also the unit $n$-ball that therefore $G\subset SO_{n}$.  As we shall see, $G$ is a compact Lie group relevant in the solution of our previous question. 
  
  As in the sketch of the proof of Theorem \ref{teom}, in Section 3,  there is a field of convex bodies tangent to $\mathbb S^n$ and affinely equivalent to $K$. This implies that the structure group of the tangent bundle of the sphere $\mathbb S^n$ can be reduced to $G$ or in other words that the characteristic map of $T\mathbb S^n$ 
  $$\chi_n:\mathbb S^{n-1}\to SO_n$$ 
  can be factorized  through $G$. See Section  of Steenrod's Book \cite{St}  or Mani's paper \cite{Ma}.
  
  If $n$ is even and $G$ is not transitive, the structure group of the tangent bundle of the sphere $\mathbb S^n$ can not be reduced to $G$.  
  This is so because if  there is a map $f:\mathbb{S}^{n-1}\to SO_n$ homotopic to $\chi_n$, such that $f(\mathbb{S}^{n-1})\subset G$ 
and $e:SO_n\to \mathbb S^n$ is the evaluation map,  then $ef$ is homotopic to $e\chi_n$. The non transitivity of $G$ implies that 
$e:SO_n\to \mathbb S^n$ is not onto 
and therefore that $ef$ is null homotopic. Thus, 
$e\chi_n$ is null homotopic, which is a contradiction in even dimensions, where we can easily calculate the even degree of
 $e\chi_n$.  Consequently, if $n$ is even,  a field of convex bodies tangent to $\mathbb{S}^{n}$ affinely equivalent to $K$ implies that $G$ is transitive and consequently that $K$ is an $n$-ball. In contrast, for $n=3$,
there is a field of convex bodies tangent to $\mathbb{S}^{n}$ and congruent to $K$, for every convex body $K\subset \mathbb{R}^{n}$, because 
$\mathbb S^3$ is parallelizable. 

Summarizing, if $n$ is even, the answer to our question; what can we say about the sections of $B$? is that all these sections are affinely equivalent to a ball and hence all of them are ellipsoids. This immediately implies that $B$ is an ellipsoid, solving conjecture 1 when $n$ is even and the Banach Conjecture when $n$ is even and $V$ is a Banach space over the reals. 

The case  $n=odd$ is more complicated.  First note that if $n=3,7$, this topological technique does not give us information about the sections of $B$, because $\mathbb S^3$ and $\mathbb S^7$ are parallelizable. We shall prove next that if $n\equiv 1$ mod $4$, with the possible exemption of $n=133$, a field of convex bodies tangent to $\mathbb{S}^{n}$ affinely equivalent to $K$ implies that $K$ is an affine body of revolution.  

Suppose the characteristic map of the sphere $\chi_n$ factorize through the maximal connected subgroup 
$G\subset SO_n$, that is, 

$$\mathbb S^{n-1} \to G \hookrightarrow SO_n.$$
We have two cases:
\begin{enumerate}
\item $G$ is a irreducible representation, that is, the action of $G$ does not fix any proper subspace, and 
\item the action of $G$ fixes a proper subspace $\Gamma^k$; $1\leq k\leq n-1$.
\end{enumerate}

In the first case, mathematicians have extensively studied irreducible representations, in particular those for which the structural group of the space tangent to the sphere can be reduced to them.  In particular, Leonard \cite{L} proved that if $G\subset SO_n$ is a maximal connected irreducible representation and the 
characteristic map of the sphere $\chi_n$ factorize through $G$, then $G$ is a simple group. 

If this is so, we have several options: 

\begin{itemize}
\item $G$ is a classical group; $SO_k, SU_k, Sp_k$,
\item $G$ is an Spin group; $Spin_k$,
\item $G$ is one of the exceptional Lie groups, $G_2, F_4, E_6, E_7$ or $E_8$.
\end{itemize}

Furthermore, in 2006, Cadek and Crabb proved that under the same hypothesis for $G$, if $n\geq 8$,  then $G$ is not isomorphic to 
$SO_k, SU_m, Sp_m$, with $k\geq 4, m\geq 2$.  If $n\equiv 1$ mod $4$, this rules out the classical groups, with the exemption of $n=5$.  We leave this exceptional case for the next section. Furthermore, it can be proved that every irreducible representation of $Spin_k$, which does not factor through $SO_m$, is even dimensional. In our case, it is clear that $G$ does not factor through $SO_m$, so if $n$ is odd, we can rule out the possibility of an Spin group for $G$. 

Suppose now, $n\equiv 1$ mod $4$. If this is the case, dim($G$) is not two small with respect to $n$ and hence $G$ is not an exceptional Lie group,
with the possible exception of the Lie group $E_7\subset O_{133}$. This is so, because it can be proved that in this case, dim($G)\geq 2n-3$ (see Proposition 3.1 of \cite{CC}). Hence to rule out the exceptional groups one can simply check (e.g., in Wikipedia) the following table in which we list the smallest  irreducible 
representation for them, and the smallest  irreducible representation congruent to 1 mod 4 is in boldface, verifying that in all the cases, with the exception of $E_7$, dim($G)\leq 2n-4$.

\bigskip
\bigskip

\begin{center}
\begin{tabular}{|l|c|c|c|c|c|}
 Group&  $G_2$&$F_4$  &$E_6$  &$E_7$& $E_8$  \\\hline
 $\dim G$&14  &52  &78  &133 & 248  \\\hline
\mbox{Irreps}& 7&26  &27  & 56 & 248 \\
 &14&52  & 78 &\fbox{{\color{red} 133}}  & 3875\\
 &27&{\color{red} 273}&351&912&\vdots \\
 &64& \vdots & {\color{red} 2925} & \vdots & {\color{red} 1763125}\\
 &{\color{red} 77}&\vdots &\vdots &\vdots &\vdots
\end{tabular}
\end{center}

\bn
\bigskip

All the above implies that if $G$ is irreducible and $n\equiv 1$ mod $4$, then $G$ is $E_7$ or is conjugate to $O_n$. Consequently, in this last case, $K$ must be a ball, all the sections must be ellipsoids and $B$ must be an ellipsoid, as we wished.

\bigskip

The second case is when the action of $G$ fixes a proper subspace $\Gamma^k$; $1\leq k\leq n-1$.

If $n=4k+1$,  the tangent space of the sphere $T\mathbb S^n$ splits 
$$T\mathbb S^n= e^1\oplus \eta^{4k},$$
where $e^1$ is a vector bundle of dimension 1 and  $\eta^{4k}$ is unsplittable.

\bigskip 

From here we deduce that $\Gamma^k$ is either $1$ or $(n-1)$-dimensional, and 
$G$ is a subset of a conjugate copy of $SO_{n-1}$. Furthermore, using  argument very similar to the argument used in the proof that 
$G=SO_n$, when $n$ is even (or see the case $n=5$), it is possible to prove that $G$ is actually a conjugate copy of $SO_{n-1}$. 
This give rise to the case in which $K$ is a body of revolution. 

\medskip

Summarizing, \emph{suppose $B$ is an $(n+1)$-dimensional  convex body with the property that all its hyperplane sections through the origin  are affinely equivalent, $n\equiv 1$ mod $4$, $n\not= 5, 133$.  Then, every hyperplane section of $B$ through the origin is an affine body of revolution.}

\subsection{The case $n=5$}
This case is an exceptional case in our proof of the Banach conjecture but it is also interesting enough to illustrate the true complexity of the conjecture. This section will be dedicated to its complete proof. 

Let $B$, $K$, and $G$ defined as in the previous section but this time $B$ is a symmetric convex body in $\mathbb R^6$, and $G=\{g\in SO_5\mid
g(K)=K\}$ is a compact Lie subgroup of $SO_5$. Furthermore, we know that the characteristic map of the tangent space of $\mathbb S^5$

$$
\begin{tikzcd}[column sep=small]
\mathbb S^{4}\arrow[dr,"f",swap] \arrow[rr, "\chi_5"] & &SO_5 \\
&G\arrow[ur,"i",swap]&
\end{tikzcd}
$$
factorizes through $G$. 

\bigskip

\emph{Suppose first $G$ leaves invariant a proper subspace of $\mathbb{R}^{5}$. We shall prove that in this case that $K$ is a body of revolution.}

\medskip
 By hypothesis, there is a $k$-dimensional  subspace $\Lambda$ invariant under  $G$. This immediately implies that there is a continuous field of $k$-planes in $\mathbb{S}^{5}$.  By Theorem 27.18 of \cite{St},  we know that $\mathbb{S}^{5}$ admits a continuous field of $k$-planes if and only if  $k=1$ or $k=4$. So, assume without loss of generality that $k=1$, and therefore that $\Lambda$ is a line invariant under $G$. Suppose without loss of generality that $\Lambda$ is the line through the origin orthogonal to $\mathbb{R}^4$, in such a way that $G\subset SO_4$.  We will prove that $G$ acts transitively on $\mathbb R^4$, thus proving that $K$ is a body of revolution. 
 
 Given any $5$-dimensional plane in $\mathbb R^6$, it is easy to prove that there is a unique complex plane contained in it. It is for this reason that there is a field of complex planes tangent to $\mathbb{S}^{5}$. This implies that the structural group of T$\mathbb{S}^{5}$ can be reduced to $SU_2$.  Thus, we may assume that $\chi_5:\mathbb S^4\to SU_2$ is the characteristic map of T$\mathbb{S}^{5}$. If $e: SO_4\to \mathbb{S}^{3}$ is the evaluation, hence, $e\chi_5:\mathbb S^4\to S^3$ is not null homotopic. To see this, note that  $SU_2$ is homeomorphic to  $\mathbb{S}^{3}$ and 
the evaluation $e:SU_2\to \mathbb S^3$ is a homeomorphism. Therefore, if $e\chi_5: \mathbb{S}^{4}\to \mathbb{S}^{3}$ is homotopically trivial,  then the same holds for $\chi_5: \mathbb{S}^{4}\to SU_2$, but this implies that the characteristic map  of T$\mathbb{S}^{5}$ is homotopic to a constant, and therefore that  T$\mathbb{S}^{5}$ is parallelizable which is a contradiction.  
 
We know that the structural group of T$\mathbb{S}^{5}$ can be reduced to $G$. Therefore, the characteristic map $\chi_5:\mathbb S^4\to SU_2$   is homotopic on $SO_4$ to a map $f:\mathbb{S}^{4}\to G$. 
This implies that  $e\chi_5, ef:\mathbb S^4\to \mathbb S^3$ are homotopic.  
If $G$ does not act transitively on 
$\mathbb R^4$, hence $ef$ is null homotopic, but this is a contradiction to the fact that $e\chi_5$ is not null homotopic.  Consequently, 
 $G$ acts transitively on $\mathbb R^4$ and $K$ is a body of revolution, as we wished.

 \medskip

Suppose now that $G\subset SO_5$ does not leave invariant a proper subspace of $\mathbb{R}^{6}$.  That is, we must study the 
\emph{irreducible representations on $\mathbb R5$.}

Consider $S$ the collection of $3\times3$ real symmetric matrices with zero  trace. Then, $S$ is a real vector space of dimension $5$ with the following natural interior product: given $A, B \in S$,  
$$A\odot B=tr(AB).$$

The group $G=SO_3$ defines the following representation:
$g(A)=gAg^{-1}=gAg^t,$
for every $g\in G$ and $A\in S$.  

Clearly, $G$ acts linearly on $S$ and furthermore, $$g(A)\odot g(B)=tr(gAg^{-1}gBg^{-1})=tr(gABg^{-1}=A\odot B.$$ 

 It is well know that this is a faithful, irreducible, representation. That is, we may think $G$ is a subgroup of $SO_5$  with the property that $G$ does not leave invariant any proper subspace.  
Moreover, it is well known that any other irreducible representation on $\mathbb R^5$ factors through $G$. 

 \medskip
 
 The following lemma finally proves that \emph{if $B$ is an $6$-dimensional  convex body with the property that all its hyperplane sections through the origin  are affinely equivalent,  then every hyperplane section of $B$ through the origin is an affine body of revolution.}

\medskip

\begin{lema}\label{lemG}
Let $\Omega \subset SO_5$ be a subgroup isomorphic to $SO_3$, Then, the structural group of $T\mathbb{S}^{5}$ can not be reduced to $\Omega$.
\end{lema}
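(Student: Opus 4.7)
\emph{Proof plan.} Suppose, for contradiction, that the structural group of $T\mathbb{S}^5$ reduces to $\Omega$, so $\chi_5:\mathbb{S}^4\to SO_5$ factors as $\chi_5\simeq i\circ f$ for some $f:\mathbb{S}^4\to\Omega$. Since $\mathbb{S}^4$ is simply connected, $f$ lifts to $\tilde f:\mathbb{S}^4\to SU_2$ across the universal double cover $p:SU_2\to SO_3\cong\Omega$; setting $\rho = i\circ p$, we obtain $\chi_5\simeq\rho\circ\tilde f$ in $\pi_4(SO_5)$. The map $\rho$ is precisely the $5$-dimensional real irreducible representation of $SU_2$, namely $\mathrm{Sym}^4$ of the standard $2$-dimensional representation.

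The key tool is the H-space structure on $SO_5$. Since $SU_2\simeq\mathbb{S}^3$, the set $[SU_2,SO_5]_*\cong\pi_3(SO_5)\cong\mathbb{Z}$ is a group under pointwise multiplication, and precomposition by any $g:Z\to SU_2$ is a group homomorphism $[SU_2,SO_5]_*\to[Z,SO_5]_*$; in particular $(n\cdot\varphi)\circ g = n\cdot(\varphi\circ g)$. Let $i_0:SU_2=Sp(1)\hookrightarrow SO(\mathbb{H})=SO_4\hookrightarrow SO_5$ denote the standard embedding (via left multiplication on $\mathbb{H}$, extended by a trivial summand), and write $[\rho] = n_\rho\cdot[i_0]$ in $\pi_3(SO_5)$. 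Then $\chi_5\simeq\rho\circ\tilde f = n_\rho\cdot(i_0\circ\tilde f)$ in $\pi_4(SO_5)\cong\mathbb{Z}/2$.

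Two facts finish the argument: $(i)$ $n_\rho$ is even; $(ii)$ $i_0\circ\tilde f$ is the generator of $\pi_4(SO_5)$. For $(i)$, a direct Chern--Weil computation on $H^4(BSU_2) = \mathbb{Z}\cdot c_2$ gives $(B\rho)^*p_1 = 20\,c_2$ (from the weights $\{\pm 4, \pm 2, 0\}$ of $\mathrm{Sym}^4$ on the standard torus), whereas $(Bi_0)^*p_1 = 2\,c_2$ (from the weights $\pm 1$ of the standard representation, doubled upon complexification), so $n_\rho = 10$. For $(ii)$, the rank-$5$ bundle over $\mathbb{S}^5$ classified by $i_0\circ\tilde f$ equals $V_{\mathbb{R}}(\tilde P)\oplus\epsilon^1 = \eta^4\oplus\epsilon^1 = T\mathbb{S}^5$, where $\tilde P$ is the nontrivial principal $SU_2$-bundle over $\mathbb{S}^5$ clutched by $\tilde f$ and the last equality is the splitting used earlier in the paper. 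Combining, $\chi_5\simeq 10\cdot[\chi_5] = 0$ in $\mathbb{Z}/2$, contradicting $[\chi_5]\ne 0$. The main obstacle is the parity computation for $n_\rho$; every other step is a formal consequence of covering theory and the H-space structure.
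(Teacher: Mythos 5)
Your proposal is correct in substance and follows the same skeleton as the paper's proof: both lift the hypothetical factorization $f:\mathbb{S}^4\to\Omega$ through the double cover $SU_2\to\Omega$ (possible because $\mathbb{S}^4$ is simply connected), both show that the composite $\rho:SU_2\to\Omega\hookrightarrow SO_5$ represents an \emph{even} multiple of the generator of $\pi_3(SO_5)\cong\mathbb{Z}$, and both conclude that $[\chi_5]$, being an even multiple of a class in $\pi_4(SO_5)\cong\mathbb{Z}/2$, is zero, contradicting the non-parallelizability of $\mathbb{S}^5$. The genuine difference is how the evenness is obtained. The paper argues homologically: the double cover is zero on $H_3(\,\cdot\,;\mathbb{Z}_2)$, while the standard inclusion $u:SU_2\to SO_5$ is nonzero on $H_3(\,\cdot\,;\mathbb{Z}_2)$ --- a fact the paper itself admits requires the Dynkin index --- forcing the integer $m=[\,i_\Omega\pi\,]$ to be even. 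You instead compute the class outright from the weights, i.e.\ you compute the Dynkin index explicitly: $p_1$ pulls back to $\pm 20\,c_2$ for $\mathrm{Sym}^4$ versus $\pm 2\,c_2$ for the realified standard representation, so $[\rho]=\pm 10\,[i_0]$. Your route makes the role of the Dynkin index completely explicit and gives the concrete value $10$; the paper's route has the advantage of being uniform in the embedding of $SO_3$. Your $H$-space precomposition step ($(n\cdot\varphi)\circ g=n\cdot(\varphi\circ g)$) is a correct and clean replacement for the paper's device of substituting a degree-$m$ self-map $\zeta$ of $\mathbb{S}^3$.

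Two caveats. First, the lemma is stated for \emph{any} subgroup $\Omega\subset SO_5$ isomorphic to $SO_3$, whereas your identification of $\rho$ with $\mathrm{Sym}^4$ presupposes the irreducible embedding. Up to conjugacy there is one other faithful possibility, the reducible embedding on $\mathbb{R}^3\oplus\mathbb{R}^1\oplus\mathbb{R}^1$, with $SU_2$-weights $\{\pm 2,0,0,0\}$ and hence $n_\rho=2$ --- still even, so your method covers it, but you should treat (or at least mention) this case; the paper's homological argument never needs to distinguish the two. Second, your fact (ii) is both unnecessary and not justified as written: since $n_\rho$ is even and $\pi_4(SO_5)\cong\mathbb{Z}/2$, you already get $[\chi_5]=n_\rho\,[i_0\circ\tilde f]=0$ no matter what the class $[i_0\circ\tilde f]$ is, and the claimed identification of the bundle clutched by $i_0\circ\tilde f$ with $T\mathbb{S}^5$ (via $V_{\mathbb{R}}(\tilde P)=\eta^4$) is unsupported --- nothing forces the rank-$4$ bundle associated to the lift $\tilde f$ to coincide with the summand $\eta^4$ in the splitting of the tangent bundle. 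Simply delete (ii); the proof stands on (i) together with the precomposition argument.
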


\pf Suppose there is $f:\mathbb{S}^{4}\to \Omega$ such that $i_\Omega f:\mathbb{S}^{4}\to SO_5$ is homotopic to the characteristic map $\chi_{5}:\mathbb{S}^{4}\to SO_5$ of $T\mathbb{S}^{5}$, where $i_\Omega:\Omega\to SO_5$ is the inclusion.  Let $\pi:\mathbb{S}^{3}\to \Omega$ be the double covering map and let $g:\mathbb{S}^{3}\to \Omega$ be such that $\pi g=f$. 

Let $u:SU_2\to SO_5$ be the inclusion. Hence $\pi_3(SO_5)=\mathbb{Z}$ (every compact, simple Lie group has $\pi_3 = \mathbb{Z})$ and $u_*:\pi_3(SU_2)\to \pi_3(SO_5)$ is a isomorphism. On the other hand, at the level of homology, 
$H_3(SO_5,\mathbb{Z}_2)$ is a directed sum of $\mathbb{Z}_2$'s and $u_*:H_3(SU_2,\mathbb{Z}_2) \to H_3(SO_5,\mathbb{Z}_2)$ is not zero.
Let us consider $[i_\Omega \pi]\in \pi_3(SO_5)=\mathbb{Z}$. Suppose $[i_\Omega \pi]= m\in\mathbb{Z}$ and let  $\zeta:\mathbb{S}^3\to SU_2$ such that the induced homomorphism in 
homotopy is $\zeta_*(1)=m\in \pi_3(SU_2)=\mathbb{Z}$.
Consequently $u\zeta:\mathbb{S}^{3}\to SO_5$ is homotopic to $i_\Omega \pi:\mathbb{S}^{3}\to SO_5$. In $3$-dimensional homology,  $(i_G \pi)_*(1)=0$ which implies that $(u\zeta)_*(1)= 0$ and therefore, 
 since $u_*:H_3(SU_2,\mathbb{Z}_2) \to H_3(SO_5,\mathbb{Z}_2)$ is not zero, that $m$ is even.

Since $m$ is even, the map $\zeta g:\mathbb{S}^{4}\to SU_2$ is null homotopic, because $\zeta_*:\pi_4(\mathbb S^3)\to \pi_4(SU_2)$ is zero.  This is a contradiction to the fact $\mathbb{S}^{5}$ is not parallelizable. 
\qed

The intuitive claim: $u_*:H_3(SU_2,\mathbb{Z}_2) \to H_3(SO_5,\mathbb{Z}_2)$  is not zero, used in the above proof, is not so easy to prove. Indeed,  to justify it, it is necessary to use the Dynkin index.

  \bigskip
  
\subsection{Affine bodies of revolution}

A convex body $K\subset\R^{n}$ is a {\em body of revolution} if it admits an {\em axis of revolution}, i.e.,    a 1-dimensional  line $L$ such that  each  section of $K$ by an affine hyperplane $\Delta$ orthogonal to $L$ is  an $(n-1)$-dimensional euclidean ball in $\Delta$, centered at $\Delta\cap L$  (possibly empty or just a point). If $L$ is an axis of revolution of $K$ then $L^\perp$, is the  associated  {\em hyperplane of revolution}.  Clearly, a ball is a body of revolution and any line through its center serves as an axis of revolution.

An axis of revolution of a plane convex figure is an axis of  symmetry (or reflexion).  Of course, a convex figure may have two different axes of symmetry without being a disk.  In dimension $n\geq3$,  the situation is different. 

\begin{teo}\label{thmdos}
A convex body of revolution $K\subset \R^n$, $n\geq3$, with two different axes of revolution must be a ball.  
\end{teo}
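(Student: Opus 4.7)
The plan is to exploit the two axes of revolution to show that $K$ is invariant under the full rotation group $SO(n)$, which then forces $K$ to be a Euclidean ball.

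I would begin by locating a common point of the two axes. If $L$ is an axis of revolution of $K$, then $K$ is invariant under the affine rotation group about $L$, isomorphic to $SO(n-1)$. The centroid of $K$ is fixed by every isometry preserving $K$; since for $n\geq 3$ the common fixed set of this rotation group is exactly $L$, the centroid of $K$ lies on $L$. Applying this to each axis shows that the centroid $c$ of $K$ equals $L_1\cap L_2$. After translating so that $c=0$, the axes $L_1, L_2$ become distinct one-dimensional linear subspaces through the origin, and the linear subgroups $G_i = \{g\in SO(n) : g|_{L_i}=\mathrm{id}\}\cong SO(n-1)$ both preserve $K$.

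The main step is to verify that $G := \langle G_1, G_2\rangle$ equals $SO(n)$. Since $L_1\neq L_2$, the Lie subalgebra $\so(L_2^\perp)$ is not contained in $\so(L_1^\perp)$, so the Lie algebra of $G$ strictly contains $\so(L_1^\perp)\cong \so(n-1)$. For $n\geq 3$, the subalgebra $\so(n-1)\subset \so(n)$ is maximal — equivalently, $SO(n)/SO(n-1)=\mathbb{S}^{n-1}$ is an irreducible symmetric space — so the Lie algebra of $G$ must be all of $\so(n)$; since both $G_i$ are connected, so is $G$, giving $G=SO(n)$. A hands-on alternative is to verify transitivity of $G$ on $\mathbb{S}^{n-1}$ directly: given $x\in\mathbb{S}^{n-1}$, first apply an element of $G_1$ to move $x$ into the plane $\mathrm{span}(L_1,L_2)$, then apply an element of $G_2$ to send the result to a prescribed unit vector of $L_1$.

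Once $K$ is $SO(n)$-invariant with $0\in K$, the conclusion is automatic: every orbit in $K$ is a sphere centered at $0$, so by convexity $K$ contains the entire closed ball of any radius realized in $K$, and therefore $K$ is the ball of its circumradius centered at $0$. The one genuine obstacle is the middle step — the maximality of $\so(n-1)$ in $\so(n)$, or equivalently the explicit two-step transitivity argument — while the centroid reduction and the final convexity argument are routine.
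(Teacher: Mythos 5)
Your main argument is correct and is essentially the paper's: after reducing to linear rotation groups, both proofs conclude from the maximality of $SO_{n-1}$ as a connected subgroup of $SO_n$ (equivalently, of $\so(n-1)$ in $\so(n)$) that the symmetry group of $K$ is all of $SO_n$, hence $K$ is a ball; your centroid argument showing that the two axes actually meet at a common point is a detail the paper leaves implicit, and it is a welcome addition, as is the explicit convexity ending. One caveat: the ``hands-on alternative'' transitivity argument is flawed as stated, since two steps do not suffice. Take $x=-u_1$, where $u_1\in L_1\cap \mathbb S^{n-1}$ and the axes meet at an angle $\theta$ with $\cos\theta\neq 0$: every $g_1\in G_1$ fixes $-u_1$, and every $g_2\in G_2$ preserves $\langle \cdot,u_2\rangle$, so $g_2g_1(-u_1)$ has inner product $-\cos\theta\neq\cos\theta=\langle u_1,u_2\rangle$ with $u_2$ and can never equal $u_1$. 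Transitivity of the group generated by $G_1$ and $G_2$ does hold, but establishing it directly requires more alternating steps together with a closure-of-orbits argument; the clean route is the Lie-algebra (or maximal connected subgroup) argument you already gave, so the proof should rest on that alone.
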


\pf  Consider $G_K=\{g\in SO_n\mid g(K)=K\}$ be the collection of orientation preserving isometries that fixes $K$ and suppose $L\not=L'$ are two different axis of revolution of $K$. Without loss of generality, we may assume $L$ is the $1$-dimensional subspace orthogonal to $\R^{n-1}$. Clearly, the collection of orientation preserving isometries of $\R^n$ that fixes $L$ also fixes $K$ and  is equal to $SO_{n-1}\subset SO_n$. On the other hand, the group of  orientation preserving isometries of $\R^n$ that fixes $L'$ fixes also $K$ and  is equal to $SO'(n-1)$, a conjugate subgroup of $SO_{n-1}$ in $SO_n$. Thus, our hypotheses imply that $SO_{n-1}\subsetneq G_K\subset SO_n$, but it is well known that $SO_{n-1}$ is a {\it maximal connected} subgroup of $SO_n$ (see Lemma 4 of \cite{MS}, p.463). 
 Therefore, $G_K=SO_n$ and $K$ must be a ball.
\qed

An {\em affine  body of revolution} is a convex body affinely  equivalent to a  body of revolution. The images, under an affine equivalence, of an  axis of revolution and its  associated hyperplane of revolution of the body of revolution are  an axis of revolution  and associated hyperplane of revolution of the affine body of revolution (not necessarily perpendicular anymore). Clearly, an ellipsoid centered at the origin is an affine  body of revolution and any hyperplane through the origin serves as a hyperplane of revolution.

As in the euclidean case, an non-elliptical body of revolution admits a unique axis of revolution and a unique hyperplane of revolution.

\begin{cor}\label{lemapdos}
An affine convex body of revolution $K\subset \R^n$, $n\geq3$, with two different hyperplanes  of revolution must be an ellipsoid.  
\end{cor}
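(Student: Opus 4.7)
The plan is to reduce to the Euclidean statement Theorem \ref{thmdos} by normalizing $K$ through its John ellipsoid, so that the rotational symmetries pulled back from the two Euclidean-model realizations of $K$ both land inside $O_n$.

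First I would pull back the two rotation groups. For each $i\in\{1,2\}$, pick an affine equivalence $T_i$ such that $T_i(K)$ is a Euclidean body of revolution with axis $\ell_i:=T_i(L_i)$ and hyperplane of revolution $\ell_i^{\perp}=T_i(H_i)$. The Euclidean rotation group $R_i=SO(\ell_i^{\perp})\cong SO_{n-1}$ preserves $T_i(K)$, so the conjugate $\Gamma_i:=T_i^{-1}R_iT_i$ is a subgroup of $\mathrm{Aff}(\R^n)$ preserving $K$, isomorphic to $SO_{n-1}$, fixing $L_i$ pointwise, and leaving $H_i$ invariant.

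Next I would normalize via John's ellipsoid $E$ of $K$. By uniqueness, $E$ is preserved by all of $\mathrm{Aff}(K)$, hence by each $\Gamma_i$; the center of $E$, being an affine invariant, is $\Gamma_i$-fixed and therefore lies on $L_i$. Choose an affine equivalence $S$ sending $E$ to the unit ball $B_1$ (with the center of $E$ mapped to the origin). Then $S\,\mathrm{Aff}(K)\,S^{-1}\subset O_n$, and each $G_i:=S\Gamma_iS^{-1}$ is a closed connected subgroup of $O_n$ isomorphic to $SO_{n-1}$ that fixes the linear line $\ell_i':=S(L_i)$ pointwise. The only such subgroup of $O_n$ is $SO((\ell_i')^{\perp})$, so $K':=S(K)$ is itself a Euclidean body of revolution with axis $\ell_i'$, for each $i=1,2$.

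The argument then splits into two cases. If $\ell_1'\neq\ell_2'$, then $K'$ is a Euclidean body of revolution admitting two distinct axes of revolution, and Theorem \ref{thmdos} gives that $K'$ is a ball; hence $K=S^{-1}(K')$ is an ellipsoid. If instead $\ell_1'=\ell_2'=:\ell'$, then $G_1=G_2=SO((\ell')^{\perp})$, and the decomposition $\R^n=\ell'\oplus (\ell')^{\perp}$ is the isotypic decomposition into the trivial and the (irreducible, since $n-1\geq 2$) standard $SO_{n-1}$-representation. Consequently $(\ell')^{\perp}$ is the unique $G_i$-invariant linear hyperplane in $\R^n$, forcing $S(H_1)=(\ell')^{\perp}=S(H_2)$, whence $H_1=H_2$, contradicting the hypothesis.

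The main point requiring care is this coinciding-axis subcase: in the affine setting the axis of revolution does not by itself determine the hyperplane, and it is the John-normalization together with the uniqueness of the isotypic decomposition of the $SO_{n-1}$-representation on $\R^n$ that pins down the invariant hyperplane and closes the argument.
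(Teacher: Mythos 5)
Your proof is correct and follows essentially the same route as the paper: normalize $K$ by its unique, affinely equivariant extremal ellipsoid (the paper uses the minimal circumscribed ellipsoid rather than the John ellipsoid, which makes no difference), so that all affine symmetries of the normalized body become orthogonal, and then reduce to Theorem \ref{thmdos}. Your extra coinciding-axis case, ruled out via the isotypic decomposition of the $SO_{n-1}$-action, simply makes explicit the passage from two distinct hyperplanes of revolution to two distinct Euclidean axes of revolution, a step the paper's proof asserts implicitly.
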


\pf Let $E$ be the ellipsoid of minimal volume containing $K$. By translation and dilatation of the principal axes of this ellipsoid, we obtain an affine isomorphism  $f:\R^{n}\to \R^{n}$ such that $f(E)$ is the unit ball of $\R^{n}$. Then since every affine isomorphism that fixes $f(K)$, also fixes $f(E)$, we have that  $f(K)$ contains two different axes of revolution. By Lemma \ref{thmdos}, $f(K)$ is a ball and consequently $K$ is an ellipsoid. 
\qed 

\subsubsection{Sections of affine bodies of revolution}

It is not difficult to see that every section of a body of revolution is a body of revolution, that is why sections of affine bodies of revolution are affine bodies of revolution. Is the converse true?  As far as I know, nobody knows the answer.

\begin{conj}
Suppose $B$ is an $(n+1)$-dimensional convex body all whose hyperplane sections through the origin are affine bodies of revolution, $n\geq3$. Then $B$ is  an affine body of revolution.
 \end{conj}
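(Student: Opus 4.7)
The plan is to extract a global hyperplane of revolution for $B$ from the hyperplanes of revolution of its sections, in the spirit of the fields-of-convex-bodies arguments used earlier in the paper. First, I would dispose of the easy case where every section $u^\perp\cap B$ is an ellipsoid. Every $2$-dimensional linear subspace of $\R^{n+1}$ lies inside some $u^\perp$, so all $2$-dimensional sections of $B$ through the origin are ellipses, and a classical characterization of ellipsoids (going back to Blaschke) then forces $B$ itself to be an ellipsoid, hence an affine body of revolution. So one may assume the open set $U := \{u\in\mathbb{S}^n : u^\perp\cap B\text{ is non-ellipsoidal}\}$ is non-empty.

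For each $u\in U$, Corollary \ref{lemapdos} produces a unique hyperplane of revolution $H_u\subset u^\perp$, an $(n-1)$-dimensional linear subspace varying continuously with $u$. Because sections of affine bodies of revolution are again affine bodies of revolution, the codimension-two section $(u_1^\perp\cap u_2^\perp)\cap B$ is itself an affine body of revolution for generic $u_1,u_2\in U$, and its unique hyperplane of revolution can be read off from either parent section, giving the compatibility
\[
H_{u_1}\cap u_2^\perp = H_{u_2}\cap u_1^\perp .
\]
Let $\ell_u\subset u^\perp$ denote the line orthogonal to $H_u$ within $u^\perp$ and set $E_u := \mathrm{span}(u,\ell_u)\subset \R^{n+1}$. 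Since $M := H_{u_1}\cap u_2^\perp = H_{u_2}\cap u_1^\perp$ has dimension $n-2$ and is orthogonal to each of $\ell_{u_1},\ell_{u_2},u_1,u_2$, the span of these four vectors has dimension at most $3$, so $E_{u_1}\cap E_{u_2}\neq \{0\}$: any two of the planes $E_u$ share a common line.

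The next and crucial step is to upgrade pairwise compatibility to a global axis: I would aim to prove that all the planes $E_u$ share a single common line $\R h\subset \R^{n+1}$. Granted this, $\ell_u$ is the line spanned by $P_{u^\perp}h$, so $H_u = h^\perp\cap u^\perp$, and the linear hyperplane $H := h^\perp$ serves as a global hyperplane of revolution for $B$: each slice of $B$ parallel to $H$ sits inside $u^\perp\cap B$ (for any appropriate $u$) as a slice parallel to $H_u$ and is therefore ellipsoidal. Equivalently, one must show that the integral curves of the tangent line field $\ell$ on $U\subset\mathbb{S}^n$ are arcs of great circles meeting at a common antipodal pair of points.

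This last step is where I expect almost all of the difficulty. Pairwise non-trivial intersection of $2$-planes in $\R^{n+1}$ does not in general force a common line — three coordinate planes in $\R^3$ pairwise meet in coordinate axes but share only the origin — so the argument must leverage the full continuous family together with the higher-order compatibilities coming from triples, quadruples, and so on of sections, and it should exploit the topology of $\mathbb{S}^n$ for $n\geq 3$ (for instance the obstructions to tangent line fields used throughout the paper) to promote pairwise to global intersection. The hypothesis $n\geq 3$ seems essential, mirroring its role in Theorem \ref{thmdos}, where passing from one to two axes of revolution already forces the full group $SO_n$.
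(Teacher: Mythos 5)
The statement you are trying to prove is not proved in the paper at all: it is stated as an open conjecture (the author explicitly writes that, as far as he knows, nobody knows the answer), and what the paper actually establishes is only the much weaker Theorem \ref{thmAn} — that under the additional hypotheses that $B$ is symmetric and $n\geq 4$, at least one hyperplane section through the origin is an ellipsoid — which suffices for the Banach application but falls far short of the conjecture. Your proposal does not close this gap either: the step you yourself flag as carrying ``almost all of the difficulty'', namely upgrading the pairwise intersection of the planes $E_u$ to a common line for the whole family, is exactly the unresolved content of the conjecture, and no argument is offered for it. As written, the proposal is therefore a plausible strategy sketch, not a proof.

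Beyond the admitted missing step, two of the steps you treat as established are themselves shaky. First, the compatibility $H_{u_1}\cap u_2^\perp = H_{u_2}\cap u_1^\perp$ is not available ``for generic $u_1,u_2$'': the paper's Lemma \ref{lemU} proves it only under the special hypothesis $L_v\subset u^\perp$, precisely because that hypothesis (via Lemma \ref{lemaun1}) guarantees the codimension-two section $u^\perp\cap v^\perp\cap B$ is non-elliptical, so that Corollary \ref{lemapdos} gives a \emph{unique} hyperplane of revolution to read off from both parents. For an arbitrary pair $u_1,u_2$ with non-elliptical parent sections, the codimension-two section may well be an ellipsoid, in which case Lemma \ref{lemuabset} imposes no relation between $H_{u_1}\cap u_2^\perp$ and $H_{u_2}\cap u_1^\perp$; also, continuity of $u\mapsto H_u$ on your open set $U$ is only argued in the paper (Lemma \ref{lema:cont}) when \emph{all} sections are non-elliptical and symmetric. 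Second, even granting a common line $\mathbb{R}h$, your concluding step is not sound: a slice of $B$ by an affine hyperplane parallel to $H=h^\perp$ is $n$-dimensional and does not sit inside any section $u^\perp\cap B$, and in any case ``every slice parallel to $H$ is an ellipsoid'' does not make $B$ an affine body of revolution — one needs the slices to form a coherent affine family of concentric, mutually homothetic ellipsoids with centers on a line, which requires a separate argument. So the proposal contains genuine gaps at the decisive points and should be presented as a program toward the conjecture, not as a proof.
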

 
 We shall give a partial answer to this conjecture which will turn out to be sufficiently good for our purposes. Under the same hypothesis we shall prove that at least one section of $B$ through the origin is an ellipsoid.  If this is so, and if in addition, $B$ satisfies the 
 hypothesis that every two or its hyperplane section through the origin are affinely equivalent, then every section of $B$ through the origin is an ellipsoid and consequently $B$ is an ellipsoid. The proof of the existence of at least one elliptical section 
is a very interesting proof that combines ideas of convex geometry and algebraic topology. Before exposing it here, we require three intuitive lemmas, which we will state without proof. We ask the reader to include their own proofs.

 \begin{lema}\label{lemuabset} 
Every  hyperplane section $\Gamma\cap K$ of an affine body of revolution $K\subset \mathbb{R}^{n}$, $n\geq 3$,  is an affine  body of revolution. Furthermore, if $H$ is the  hyperplane of revolution of $K$, then either $\Gamma$ is parallel to $H$ or $\Gamma\cap H$ is a hyperplane of revolution of $\Gamma \cap K$.  
\end{lema}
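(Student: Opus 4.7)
The plan is to reduce to the Euclidean case and then argue by a direct coordinate computation. The statement is affinely invariant: if $A$ is an affine automorphism of $\R^n$, then $A$ sends affine bodies of revolution, their axes, their hyperplanes of revolution, and their hyperplane sections to the corresponding objects, and preserves parallelism of hyperplanes. Writing $K = A(R)$ with $R$ a genuine body of revolution and setting $\Gamma_0 := A^{-1}(\Gamma)$, the problem for $(K,\Gamma)$ reduces to the problem for $(R,\Gamma_0)$. So I may assume $K$ is itself a body of revolution with axis $L$ the $x_n$-axis and hyperplane of revolution $H = \{x_n = 0\}$; in these coordinates each horizontal slice $K\cap\{x_n = t\}$ is an $(n-1)$-ball in $(x_1,\dots,x_{n-1})$ centered at the origin, of some radius $r(t)\ge 0$.

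If $\Gamma \parallel H$ then $\Gamma = \{x_n = c\}$ for some $c$, and $\Gamma\cap K$ is by hypothesis a Euclidean ball, trivially a body of revolution; this yields the first alternative. Otherwise, the rotational $SO_{n-1}$-symmetry of $K$ about $L$ acts on the hyperplanes of $\R^n$ not parallel to $H$, and I would use it to bring $\Gamma$ into the normal form $\{a x_1 + b x_n = c\}$ with $a \neq 0$. Set $L' := \Gamma \cap \mathrm{span}(e_1, e_n)$, a line in $\Gamma$ with direction vector $(-b/a, 0,\dots, 0, 1)$, and note that $\Gamma\cap H = \{x_1 = c/a,\ x_n = 0\}$ is an $(n-2)$-dimensional affine subspace of $\Gamma$ whose direction is spanned by $e_2,\dots,e_{n-1}$. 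Since $(-b/a, 0,\dots,0, 1)$ is orthogonal in $\R^n$ to each of $e_2,\dots, e_{n-1}$, the line $L'$ is perpendicular to $\Gamma\cap H$ in the metric that $\Gamma$ inherits from $\R^n$.

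To conclude, I would slice $\Gamma\cap K$ by affine hyperplanes of $\Gamma$ orthogonal to $L'$; these are precisely the translates of $\Gamma\cap H$ inside $\Gamma$, each of the form $\Gamma\cap K\cap\{x_n = t\}$. Such a set is the cut of the $(n-1)$-ball $K\cap\{x_n = t\}$ by the codimension-one flat $\{x_1 = (c-bt)/a\}$, hence an $(n-2)$-Euclidean ball centered on $L'$ of radius $\sqrt{r(t)^2 - ((c-bt)/a)^2}$ (empty for $t$ outside the relevant interval). Therefore $\Gamma\cap K$ is a body of revolution in $\Gamma$ with axis $L'$ and hyperplane of revolution $\Gamma\cap H$, giving the second alternative; transporting back by $A$ recovers the statement for the original $K$. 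The main obstacle is the affine bookkeeping: I must verify that the perpendicularity above is computed in the correct inherited metric on $\Gamma$, and that after undoing $A$ the hyperplane of revolution of $\Gamma\cap K$ is genuinely $\Gamma\cap H$ and not some other $(n-2)$-flat of $\Gamma$.
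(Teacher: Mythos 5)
Your argument is correct, and there is in fact nothing in the paper to compare it with: Lemma \ref{lemuabset} is one of the three ``intuitive lemmas'' that the paper deliberately states without proof, asking the reader to supply one, so your affine reduction followed by the explicit coordinate computation is a perfectly legitimate way to fill that gap. Two points you flag or gloss over deserve one sentence each in a written-up version. First, the normalization by the $SO_{n-1}$-symmetry is harmless because any such rotation $\rho$ fixes $K$ and fixes $H$ setwise, so proving the claim for $\rho(\Gamma)$ and pulling back by $\rho^{-1}$ yields it for $\Gamma$; similarly the final transport through $A$ is automatic, since for an affine body of revolution the axis and hyperplane of revolution are \emph{by definition} the $A$-images of the Euclidean ones, and your computation does exhibit the required data in the Euclidean model: the slice centers $\bigl((c-bt)/a,0,\dots,0,t\bigr)$ lie on $L'$, and the flats of $\Gamma$ orthogonal to $L'$ are exactly the translates of $\Gamma\cap H$ inside $\Gamma$, with $t=0$ giving $\Gamma\cap H$ itself, so $\Gamma\cap H$ is indeed (up to the translate ambiguity inherent in the paper's phrase ``a hyperplane of revolution'') a hyperplane of revolution of $\Gamma\cap K$ with axis $L'$. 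Second, you should dispose explicitly of degenerate sections, where $\Gamma$ misses the interior of $K$ and $\Gamma\cap K$ is empty, a point, or lower-dimensional; these are trivial (and in the paper's application $\Gamma$ always passes through the interior point at the origin), but the statement as written quantifies over every hyperplane section.
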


\smallskip

\begin{lema}\label{lemaun1}
Let $K\subset  \R^{n}$, $n\geq 3$, be an affine body of revolution with axis of revolution the line $L$ and 
let $\Gamma$ be a hyperplane containing $L$.  Suppose that $\Gamma \cap K$ is an ellipsoid, then 
Then $K$ is an ellipsoid.
\end{lema}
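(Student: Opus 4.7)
My plan is to reduce the affine case to an honest body of revolution via an affine change of coordinates and then read the profile function of $K$ directly off the ellipsoidal section. Since $K$ is an affine body of revolution, I first apply an affine isomorphism $T$ of $\R^n$ so that $K_0:=T(K)$ is a genuine body of revolution whose axis $L_0:=T(L)$ is the $x_n$-axis and whose associated hyperplane of revolution is $\{x_n=0\}$. Then $K_0$ is described by a concave profile function $f$ via
\[
K_0=\{(y,t)\in \R^{n-1}\times\R : \|y\|\le f(t)\}.
\]
The hyperplane $\Gamma_0:=T(\Gamma)$ still contains $L_0$, and since every rotation of $\R^{n-1}$ around $L_0$ preserves $K_0$, I may further compose $T$ with such a rotation to arrange $\Gamma_0=\{x_{n-1}=0\}$. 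The set $\Gamma_0\cap K_0=T(\Gamma\cap K)$ remains an $(n-1)$-dimensional ellipsoid, since affine maps send ellipsoids to ellipsoids.

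The key observation is that $\Gamma_0\cap K_0$ is itself a body of revolution with the \emph{same} profile function $f$: writing a point of $\Gamma_0$ as $(y',0,t)\in\R^{n-2}\times\{0\}\times\R$, one has
\[
\Gamma_0\cap K_0 = \{(y',t)\in\R^{n-2}\times\R : \|y'\|\le f(t)\}.
\]
Geometrically, $\Gamma_0$ contains $L_0$ and therefore meets each circular cross-section of $K_0$ along a diameter, so the radial profile survives intact.

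Now I combine the two descriptions: $\Gamma_0\cap K_0$ is simultaneously an $(n-1)$-dimensional ellipsoid and a body of revolution with axis $L_0$. The reflective symmetry across $L_0$ (in fact, the full $SO_{n-2}$-symmetry when $n\geq 4$) forces $L_0$ to be a principal axis of the ellipsoid with center on $L_0$. Reading off the equation of such an ellipsoid yields
\[
f(t)^2=\alpha-\beta(t-c)^2
\]
for constants $\alpha,\beta>0$ and $c\in\R$ (positivity of $\beta$ uses boundedness of $K_0$). Substituting back,
\[
K_0=\{(y,t)\in\R^{n-1}\times\R : \|y\|^2+\beta(t-c)^2\le\alpha\},
\]
which is an ellipsoid of $\R^n$, and consequently $K=T^{-1}(K_0)$ is an ellipsoid as well.

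The only conceptually delicate step, and really the whole content of the lemma, is the middle one: recognizing that a hyperplane through the axis of a body of revolution inherits the same profile function, so that the ellipsoidal hypothesis on the section becomes a quadratic-profile hypothesis on $K$ itself. Once that observation is made, the rest is essentially a one-line calculation and the hypothesis $n\geq 3$ is needed only to guarantee that the section is at least two-dimensional, so that ``ellipse with axis of revolution'' already forces the axis to be a principal axis.
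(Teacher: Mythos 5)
Your proof is correct. Note that the paper deliberately states this lemma without proof (it is one of the ``three intuitive lemmas'' left to the reader), so there is no argument of the author's to compare against; your reduction is exactly the natural way to fill it in: normalize by the affine equivalence $T$ so that $K_0=T(K)$ is a genuine body of revolution with axis the $x_n$-axis, use the rotational symmetry to place $\Gamma_0=\{x_{n-1}=0\}$, observe that a hyperplane through the axis cuts every ball cross-section along a diametral slice and hence inherits the full profile $f$, and then use the symmetry of the ellipsoidal section to force its center onto the axis and kill the mixed terms, giving $f(t)^2=\alpha-\beta(t-c)^2$ and hence that $K_0$, and so $K$, is an ellipsoid. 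Two small points you may want to make explicit: first, the invariance argument (reflection for $n=3$, $SO_{n-2}$ for $n\geq 4$) fixes the unique center of the ellipse/ellipsoid, which is why the center lies on $L_0$ and the quadratic form has no $y'\,t$ cross term; second, when you substitute back you are implicitly using that the set of heights $t$ with nonempty cross-section of $K_0$ coincides with the $t$-extent of $\Gamma_0\cap K_0$, which holds because the axis itself lies in $\Gamma_0$, so the two descriptions of $K_0$ agree as sets and not just on a common interval. With these remarks spelled out the argument is complete.
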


\smallskip

\begin{lema}\label{lema:cont}
 Let $B\subset\R^{n+1}$ be a symmetric convex body, all of whose hyperplane sections are non-elliptical  affine symmetric bodies of revolution. For each $x\in \mathbb S^n$ let $L_x$ be the (unique) axis of revolution of $x^\perp\cap B$, where $x^\perp$ denotes the subspace orthogonal to $x$.
 Then $x\mapsto L_x$ is a continuous function $\mathbb S^n\to \R P^n$. Consequently,
 $$\{x+ L_x\}_{x\in \mathbb{S}^{n}}, $$
is a field of lines tangent to $\mathbb{S}^{n}.$
\end{lema}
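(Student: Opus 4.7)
The plan is to normalize each section to a genuine Euclidean body of revolution via the John ellipsoid, and then deduce continuity of its axis from a compactness-plus-uniqueness argument. Continuity being local, I would fix $x_0 \in \mathbb S^n$ and trivialize the bundle $x \mapsto x^\perp$ over a neighborhood of $x_0$ by a continuous choice of orthonormal frames; this identifies $x^\perp$ with $\R^n$ and presents $K_x := x^\perp \cap B$ as a Hausdorff-continuous family of convex bodies in $\R^n$, each a non-elliptical symmetric affine body of revolution whose axis $L_x$ is unique by Corollary~\ref{lemapdos}.

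Next, let $E_x$ denote the (centrally symmetric) John ellipsoid of $K_x$ and let $T_x$ be the unique positive-definite symmetric linear map with $T_x(E_x) = \mathbb{B}^n$; both vary continuously with $x$. The renormalized body $K'_x := T_x(K_x)$ has the round unit ball as its John ellipsoid, so its linear stabilizer is contained in $O(n)$. That stabilizer is the $T_x$-conjugate of the linear stabilizer of $K_x$ and so contains a subgroup isomorphic to $SO(n-1)$ fixing the line $T_x(L_x)$ pointwise; any closed connected subgroup of $O(n)$ isomorphic to $SO(n-1)$ is the group of rotations about a unique line. Hence $K'_x$ is a genuine Euclidean body of revolution with axis $L'_x := T_x(L_x)$, and $K'_x$ cannot be a ball, since otherwise $K_x$ would be an ellipsoid.

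Continuity of $x \mapsto L'_x$ now follows by compactness: given $x_k \to x$, I would pass to a subsequence with $L'_{x_k} \to \ell'$ in the compact projective space. The rotation $R^{(k)}_\theta$ of $\R^n$ about $L'_{x_k}$ stabilizes $K'_{x_k}$ for every $\theta$, and in the limit $R^{(k)}_\theta \to R_\theta$, the rotation about $\ell'$, which by continuity stabilizes $K'_x$. Thus $K'_x$ is a Euclidean body of revolution with axis $\ell'$; uniqueness for non-spherical bodies forces $\ell' = L'_x$. Every convergent subsequence of $\{L'_{x_k}\}$ having the same limit gives $L'_{x_k} \to L'_x$, and applying the continuous inverse $T_x^{-1}$ recovers continuity of $L_x = T_x^{-1}(L'_x)$. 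Since $L_x \subset x^\perp$, the line $x + L_x$ lies in the affine tangent space of $\mathbb S^n$ at $x$, so $\{x + L_x\}$ is indeed a continuous field of tangent lines.

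The main obstacle I expect is the normalization step: justifying that $K'_x$ is not merely an affine but a Euclidean body of revolution requires two rigidity facts — that the linear stabilizer of a convex body whose John ellipsoid is the round ball is contained in $O(n)$, and that any closed connected subgroup of $O(n)$ isomorphic to $SO(n-1)$ is the rotation group of a unique axis. Once these are in hand, the remaining ingredients (continuity of the John ellipsoid, Hausdorff continuity of rotational invariants, and uniqueness from Corollary~\ref{lemapdos}) are standard.
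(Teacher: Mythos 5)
Your proposal is correct, but it cannot be compared with the paper's own argument because the paper deliberately states Lemma~\ref{lema:cont} (together with Lemmas~\ref{lemuabset} and~\ref{lemaun1}) \emph{without proof}, leaving it to the reader; so what you have written is precisely the kind of compactness-plus-uniqueness argument the author is implicitly relying on, carried out in full. The normalization by the (L\"owner/John) ellipsoid, the resulting containment of the linear stabilizer in $O(n)$, and the subsequential-limit argument combined with uniqueness of the axis for non-spherical bodies of revolution (Theorem~\ref{thmdos}, Corollary~\ref{lemapdos}) are all sound, and they match the toolkit the paper itself uses in Section~3. The one ingredient you rightly flag as nontrivial, that a closed connected subgroup of $O(n)$ isomorphic to $SO(n-1)$ is conjugate to the standard one, is true (for $n-1\ge 2$ the only faithful $n$-dimensional real representation of $SO(n-1)$ is the standard one plus a trivial summand; in particular there is no faithful irreducible representation in dimension $n$), but you could bypass it entirely with the paper's own observation from Section~3: two affinely equivalent bodies whose minimal circumscribed ellipsoid is the unit ball are congruent, so the renormalized section $K'_x$ is an orthogonal image of a Euclidean body of revolution, hence itself a Euclidean body of revolution, and its Euclidean axis must coincide with $T_x(L_x)$ by uniqueness of the affine axis for non-elliptical bodies. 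Two small points worth making explicit in a final write-up: the axis $L_x$ passes through the origin (central symmetry of $x^\perp\cap B$ maps the unique axis to itself), so linear rather than affine normalizations suffice; and the continuity of $x\mapsto x^\perp\cap B$ in the Hausdorff metric (origin interior to $B$) and of the body $\mapsto$ minimal ellipsoid map are the standard facts your limit argument needs.
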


\smallskip

Since every field of tangent lines give rise to a trivial 1-dimensional fiber bundle over $\mathbb{S}^{n}$, then there is
$$\psi:\mathbb{S}^{n}\to \mathbb{S}^{n},$$
such that for every $x\in \mathbb{S}^{n}$
$$L_x\cap \mathbb{S}^{n}=\{-\psi(x), \psi(x)\}.$$

Note that,  for every $x\in \mathbb{S}^{n}$, $\psi(x)$ is orthogonal to $x$ and hence $\psi(x)\not=-x$.  This implies that 
$\psi:\mathbb{S}^{n}\to \mathbb{S}^{n}$ is homotopic to the identity map and therefore that $\psi$ is suprayective. 

\bigskip

From now on, let $B\subset\R^{n+1}$ be a symmetric convex body, all of whose hyperplane sections are non-elliptical  affine symmetric bodies of revolution and remember that 
for every $u\in \mathbb{S}^{n-1}$, we denote by $u^\perp$ the $(n-1)$-dimensional subspace of $\mathbb{R}^{n}$ orthogonal to $u$.  Furthermore,  by Lemma \ref{lemapdos}, denote by $L_u$ the unique affine axis of revolution of $u^\perp\cap B$, and by 
$H_u$ the corresponding $(n-1)$-dimensional hyperplane of revolution of $u^\perp$.  
Note that the line $L_u$ contains the origin. The fact that $u^\perp\cap B$ is symmetric implies that the origin is the center of the ellipsoid  $H_u\cap B$ and therefore that the origin lies in $L_u$.

\begin{lema}\label{lemU}
Let $B\subset \mathbb{R}^{n+1}$ be a  symmetric convex body with center at origin, $n\geq 4$ and suppose that every hyperplane section of $B$ through the origin  is a non-elliptical  affine convex body of revolution. 
Suppose $L_v\subset u^\perp$ for some $u,v \in \mathbb{S}^{n-1}$. Then 
$$H_ v\cap \Gamma_u=H_u\cap \Gamma_v= H_v\cap H_u.$$ 
\end{lema}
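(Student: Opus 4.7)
The plan is to analyze the sub-hyperplane section $\Pi\cap B$, where $\Pi=u^\perp\cap v^\perp$, viewing it simultaneously as a hyperplane section of the affine body of revolution $u^\perp\cap B$ (via $\Pi\subset u^\perp$) and as a hyperplane section of the affine body of revolution $v^\perp\cap B$ (via $\Pi\subset v^\perp$). The case $u=\pm v$ is trivial (all three terms collapse to $H_u=H_v$), so I assume $u\neq\pm v$, which makes $\Pi$ an $(n-1)$-dimensional subspace of $\mathbb{R}^{n+1}$. The hypothesis $L_v\subset u^\perp$ together with $L_v\subset v^\perp$ gives $L_v\subset\Pi$, and this is the fact I will exploit.

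The first step is to show that $\Pi\cap B$ is non-elliptical. Regarded as a section of $v^\perp\cap B$, it is cut by a hyperplane $\Pi$ of $v^\perp$ containing the axis of revolution $L_v$ of $v^\perp\cap B$. If $\Pi\cap B$ were an ellipsoid, Lemma \ref{lemaun1} would force $v^\perp\cap B$ itself to be an ellipsoid, contradicting the standing assumption that every hyperplane section of $B$ is non-elliptical.

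Next, I apply Lemma \ref{lemuabset} from both sides. Viewing $\Pi\cap B$ as a section of $u^\perp\cap B$, the lemma yields the dichotomy: either $\Pi=H_u$ (parallelism through the origin becoming equality), or $H_u\cap\Pi=H_u\cap v^\perp$ is a hyperplane of revolution of $\Pi\cap B$. The first alternative would make $\Pi\cap B=H_u\cap B$ the section of an affine body of revolution by its own hyperplane of revolution, hence the affine image of the equatorial round ball and thus an ellipsoid — contradicting the preceding paragraph. So $H_u\cap v^\perp$ is a hyperplane of revolution of $\Pi\cap B$. Symmetrically, applied to $v^\perp\cap B$: either $\Pi=H_v$, or $H_v\cap u^\perp$ is a hyperplane of revolution of $\Pi\cap B$. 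The first alternative would force $H_v\subset u^\perp$; combined with $L_v\subset u^\perp$ and the affine splitting $v^\perp=L_v\oplus H_v$, this would give $v^\perp\subset u^\perp$ and therefore $u=\pm v$, contrary to assumption. Hence $H_v\cap u^\perp$ is also a hyperplane of revolution of $\Pi\cap B$.

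To finish, I invoke the uniqueness of the hyperplane of revolution for non-elliptical affine bodies of revolution (quoted in the paper) to conclude $H_u\cap v^\perp=H_v\cap u^\perp$. Writing $W$ for this common subspace, we have $W\subset H_u$ and $W\subset H_v$, so $W\subset H_u\cap H_v$; conversely, since $H_v\subset v^\perp$, $H_u\cap H_v\subset H_u\cap v^\perp=W$. Therefore $H_u\cap v^\perp=H_v\cap u^\perp=H_u\cap H_v$, which is the claimed identity (reading $\Gamma_u$ as $u^\perp$). The main obstacle is identifying the right auxiliary object, namely $\Pi\cap B$, and bootstrapping the two viewpoints against each other; once this is done, Lemmas \ref{lemuabset} and \ref{lemaun1} together with uniqueness of revolution data close the argument.
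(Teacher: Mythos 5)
Your proposal is correct and follows essentially the same route as the paper: it analyzes $u^\perp\cap v^\perp\cap B$ from both sides, uses Lemma \ref{lemaun1} (via the axis $L_v\subset u^\perp\cap v^\perp$) to rule out ellipticity, applies Lemma \ref{lemuabset} to each of $u^\perp\cap B$ and $v^\perp\cap B$, and concludes by uniqueness of the hyperplane of revolution (Corollary \ref{lemapdos}). Your extra care with the degenerate alternatives ($u=\pm v$, $\Pi=H_v$) and the explicit final identification with $H_u\cap H_v$ only makes the argument slightly more complete than the paper's.
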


\pf 
Consider  $u^\perp\cap v^\perp$, the $(n-1)$-dimensional subspace of $v^\perp$.  By hypothesis, $v^\perp\cap B$ is a non-elliptical  affine body of revolution with affine axis of revolution $L_v$. Therefore,  
since $L_v\subset u^\perp\cap v^\perp$, we have that $u^\perp\cap v^\perp\cap B$ is an affine body of revolution with affine axis of revolution $L_v$. Furthermore, by Lemma \ref{lemaun1}, $u^\perp\cap v^\perp\cap B$ is not an ellipsoid.  Moreover,  the principal affine subspace of revolution of 
$u^\perp\cap v^\perp\cap B$ is $H_v\cap u^\perp$. 

On the other hand, $u^\perp\cap v^\perp$, is an $(n-1)$-dimensional subspace of $u^\perp$.  Note that $u^\perp\cap v^\perp\not=H_u$, otherwise 
$u^\perp\cap v^\perp\cap B=H_u\cap B$, 
would be an ellipsoid, contradicting  our previous assumption.  Since $u^\perp\cap B$ is a non-elliptical  affine body of revolution and 
$u^\perp\cap v^\perp\not=H_u$, then by Lemma \ref{lemuabset}, $u^\perp\cap v^\perp\cap B$ is an affine body of revolution with principal affine subspace of revolution $H_u\cap v^\perp$. Consequently, by Lemma \ref{lemapdos}, we have that $H_ v\cap u^\perp=H_u\cap v^\perp$.
\qed

\smallskip

Our main results regarding affine bodies of revolution is:

\begin{teo}\label{thmAn}
Let $B\subset \mathbb{R}^{n+1}$ be a symmetric convex body with center at origin, $n\geq 4$ and suppose that every hyperplane section of $B$ through the origin  is an affine body of revolution. Then there is a hyperplane section through the origin of  $B$ which is 
an ellipsoid.
\end{teo}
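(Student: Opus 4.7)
My plan is to prove this by contradiction: assume that no hyperplane section of $B$ through the origin is an ellipsoid, and derive a contradiction. Under this assumption, every section $u^\perp\cap B$ is a non-elliptical affine body of revolution, so by Corollary~\ref{lemapdos} it admits a unique axis of revolution $L_u$ and a unique hyperplane of revolution $H_u$. By Lemma~\ref{lema:cont} and the analogous argument for $H_u$, both depend continuously on $u$. I will keep in mind throughout that the ``equatorial'' section $H_u\cap B$ is automatically an $(n-1)$-dimensional ellipsoid, being a section of the affine body of revolution $u^\perp\cap B$ by its own hyperplane of revolution.

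First I would dispose of the case $n$ even: the continuous line field $\{L_u\}$ on $\mathbb{S}^n$ is a line subbundle of $T\mathbb{S}^n$, and since line bundles over $\mathbb{S}^n$ ($n\geq 2$) are trivial, this yields a continuous nowhere-vanishing tangent vector field, contradicting the hairy ball theorem. So I may assume $n$ is odd.

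For $n$ odd, my strategy is to produce a pair $(u,v)\in \mathbb{S}^n\times\mathbb{S}^n$ with $H_u\subset v^\perp$ and $L_v\subset H_u$. Given such a pair, $v^\perp\cap B$ is an affine body of revolution whose subsection by $H_u$ is the ellipsoid $H_u\cap B$ and contains the axis $L_v$; Lemma~\ref{lemaun1} then forces $v^\perp\cap B$ itself to be an ellipsoid, contradicting the hypothesis. To find such a pair I would fix $u\in\mathbb{S}^n$: the condition $H_u\subset v^\perp$ restricts $v$ to the great circle $C_u=H_u^\perp\cap\mathbb{S}^n$ (a circle because $H_u^\perp$ is a $2$-plane in $\mathbb{R}^{n+1}$ containing $u$), and the condition $L_v\subset H_u$ becomes the vanishing of a continuous scalar $h_u\colon C_u\to\mathbb{R}$ measuring the component of $\psi(v)$ transverse to $H_u$ inside $v^\perp$. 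I would then seek a zero of $h_u$, letting $u$ vary as well if necessary and running a degree/coincidence argument.

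The main obstacle, and the heart of the proof, is this last step: showing that the coincidence $(u,v)$ must exist. There is no purely topological obstruction for $n$ odd, since such line and hyperplane fields can exist abstractly on odd spheres, so the argument has to rely essentially on Lemma~\ref{lemU}, which forces $H_v\cap H_u=H_v\cap u^\perp=H_u\cap v^\perp$ whenever $L_v\subset u^\perp$ and thereby rigidly interlocks the axis/hyperplane data of different sections. I would combine this convex-geometric rigidity with the topological fact that $\psi\colon \mathbb{S}^n\to\mathbb{S}^n$ is homotopic to the identity (Lemma~\ref{lema:cont}) to conclude that the coincidence cannot be avoided. Making this topological bookkeeping explicit, and in particular controlling the antipodal behaviour of $\psi$ along $C_u$ so that the sign-change of $h_u$ is guaranteed, is where I expect the real technical work to lie.
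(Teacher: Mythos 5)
There is a genuine gap, and it sits exactly where you say it does: the existence of the coincidence pair is never established, and you defer it as ``where I expect the real technical work to lie.'' That step is the entire content of the theorem, so the proposal as it stands is a plan, not a proof. Moreover, the difficulty is one you create yourself by demanding the stronger coincidence $H_u\subset v^\perp$ \emph{and} $L_v\subset H_u$ (so that you can finish with Lemma~\ref{lemaun1}). With that requirement, $v$ is pinned to the circle $C_u=H_u^\perp\cap\mathbb{S}^n$ and you must find a zero of the scalar $h_u$ there; but there is no evident reason why $h_u$ should change sign on $C_u$ for a fixed $u$, and the ``degree/coincidence argument, letting $u$ vary'' is not carried out, nor is it clear how Lemma~\ref{lemU} would feed into it.

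The paper's proof shows that a much weaker incidence suffices and comes essentially for free. One only needs a single pair with $L_{v_0}\subset H_{u_0}$, with no condition relating $v_0$ to $u_0^\perp$. Indeed, since $H_{u_0}\subset u_0^\perp$, this already gives $L_{v_0}\subset u_0^\perp$, so Lemma~\ref{lemU} applies and yields $L_{v_0}\subset H_{u_0}\cap v_0^\perp=H_{v_0}\cap u_0^\perp\subset H_{v_0}$, which is absurd because the axis $L_{v_0}$ and the hyperplane of revolution $H_{v_0}$ are complementary inside $v_0^\perp$ (and the degenerate case $v_0=\pm u_0$ is impossible for the same reason). Such a pair exists by surjectivity of $\psi$: since $\psi(x)\perp x$, $\psi$ is homotopic to the identity, hence of degree $1$ and onto, so for any fixed $u_0$ and any $x_0\in H_{u_0}\cap\mathbb{S}^n$ there is $v_0$ with $\psi(v_0)=x_0$, i.e.\ $L_{v_0}\subset H_{u_0}$. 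Note this argument is uniform in $n$, so your separate even-$n$ reduction via the hairy ball theorem, while correct, is unnecessary; and your final contradiction via Lemma~\ref{lemaun1} is fine but is only reachable through the coincidence you cannot produce. The missing idea, in short, is that Lemma~\ref{lemU} converts the trivially obtainable incidence $L_v\subset H_u$ into the contradiction, so no sign-change or parametrized degree analysis is needed at all.
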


\begin{proof} 
Suppose not, suppose $B$ is a symmetric convex body with center at the origin and with the property that every hyperplane section of $B$ through the origin  is a non-elliptical  affine convex body of revolution. 

Let us fix a point $x_0\in H_{u_0}\cap \mathbb{S}^{n}.$  Since $\psi:\mathbb{S}^{n}\to \mathbb{S}^{n}$ is suprayective,
let $v_0\in \mathbb{S}^{n}$ such that $\psi(v_0)=x_0$.
This implies that 
$$L_{v_0}\subset H_{u_0}.$$

This is a contradiction to  Lemma \ref{lemU}  because clearly $L_{v_0}\subset {u_0}^\perp,$ hence, 
$$ L_{v_0}\subset H_ {u_0}\cap {v_0}^\perp=H_{v_0}\cap {u_0}^\perp\subset H_{v_0},$$
which is impossible. 
\end{proof}

Theorem \ref{thmAn} is also true when $n=2$.  Indeed, in \cite{M2} Montejano proved that if $B$ is a $3$-dimensional convex body which contains the origin as interior point and every section through the origin is a figure that has a line of reflection (symmetry), then there is a section through the origin that is a disk. 
The proof also uses topology but it is intrinsically different to the proof of  Theorem \ref{thmAn}.  The case n = 3 remains open.

\bigskip 

With this, we have finished exposing the solution to the Banach conjecture over the reals given by Gromov \cite{Gv}, when $n=$ even and by
Bor-Hernandez-Jimenez-Montejano \cite{BHJM} when $n\equiv 1$ mod $4$, $n\not=133$. 
We summarize the results below in the following Theorem.

\begin{teo}[Main Theorem]\label{teomain}
If all hyperplane sections through the origin of an $(n+1)$-dimensional convex body $B$ are affinely equivalent, $n\equiv 0,1,2$ mod $4$, $n\not=133$,
 then the convex body $B$ is an ellipsoid.
\end{teo}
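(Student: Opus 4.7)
The plan is to assemble the pieces developed throughout the paper into a case analysis on $n\bmod 4$. I would begin by applying Theorem \ref{teom} to conclude that every hyperplane section of $B$ through the origin is centrally symmetric, and then Larman's False Center Theorem \ref{teol} gives a dichotomy: either $B$ is already an ellipsoid (and we are done) or the origin is a center of symmetry of $B$. Henceforth assume the latter, so that both $B$ and every section $u^\perp\cap B$ are centrally symmetric about the origin. Fix a convex body $K\subset\R^n$ affinely equivalent to each section with L\"owner ellipsoid equal to the unit ball, let $G=\{g\in SO_n:g(K)=K\}$, and observe that the field $\kappa(u)=u+(u^\perp\cap B)$ forces the characteristic map $\chi_n:\mathbb{S}^{n-1}\to SO_n$ of $T\mathbb{S}^n$ to factor through $G$.

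When $n\equiv 0$ or $2\pmod 4$, $n$ is even, and the non-transitivity-plus-even-degree argument from the Topology of Lie Groups subsection forces $G$ to act transitively on $\mathbb{S}^{n-1}$; hence $K$ is an $n$-ball, every hyperplane section of $B$ through the origin is an ellipsoid, and the classical characterization of ellipsoids by elliptical sections yields that $B$ is itself an ellipsoid.

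When $n\equiv 1\pmod 4$ with $n\neq 133$, I would split into the irreducible and reducible cases for the action of $G$ on $\R^n$. For $n\neq 5,133$, Leonard's theorem, the Cadek--Crabb dimension bound, and the exceptional-group table already shown above rule out every irreducible possibility except $G=SO_n$; for the exceptional dimension $n=5$, Lemma \ref{lemG} rules out the remaining nontrivial irreducible $SO_3$-representation on $\R^5$. Otherwise the action of $G$ fixes a proper subspace, and the splitting $T\mathbb{S}^n=e^1\oplus \eta^{4k}$ together with a transitivity-plus-degree argument forces $G$ to be a conjugate copy of $SO_{n-1}$, so $K$ is a body of revolution. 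Either way every section of $B$ is an \emph{affine body of revolution}, and since $n\ge 5\ge 4$, Theorem \ref{thmAn} produces at least one hyperplane section through the origin that is an ellipsoid. Because all sections of $B$ through the origin are mutually affinely equivalent, they are then all ellipsoids, and $B$ is an ellipsoid as in the even case.

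The step I expect to be the main obstacle is the passage from the topological conclusion \emph{every section is an affine body of revolution} to the convex-geometric conclusion \emph{some section is an ellipsoid}: this is precisely where Theorem \ref{thmAn} is indispensable, and its proof via Lemma \ref{lemU} and the field of axes of revolution on $\mathbb{S}^n$ is the most delicate part of the program. The final leap from one elliptical section back to $B$ being an ellipsoid then rests squarely on the affine-equivalence hypothesis, without which the chain of implications collapses.
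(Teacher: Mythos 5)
Your proposal is correct and follows essentially the same route as the paper: Theorem \ref{teom} plus Larman's False Center Theorem to reduce to the origin-symmetric case, the structure-group reduction of $T\mathbb{S}^n$ to $G$, the even-degree/transitivity argument for $n$ even, the Leonard--Cadek--Crabb--exceptional-table analysis (with Lemma \ref{lemG} for $n=5$) and the reducible case giving $SO_{n-1}$ for $n\equiv 1$ mod $4$, and finally Theorem \ref{thmAn} combined with the affine-equivalence hypothesis to upgrade one elliptical section to all sections and hence to $B$ being an ellipsoid. This is precisely the assembly the paper intends when it states the Main Theorem as a summary of Section 4.
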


\bigskip

\subsection{The Banach Conjecture,  when $n$ is odd and dim $V\geq n+2$}

As we have mentioned before, the cases of the Banach conjecture that have yet to be solved are those in which $ n \equiv 3 $ mod $ 4 $. That is, the first unsolved case from the Banach conjecture is the following.

\begin{conj}\label{conj}
If all hyperplane sections through the origin of a $4$-dimensional convex body $B$ are affinely equivalent, 
 then the convex body $B$ is an ellipsoid.
\end{conj}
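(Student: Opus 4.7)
The plan is to run the strategy that succeeds for $n\equiv 0,1,2\pmod 4$, and to flag precisely where it must be strengthened because $\mathbb{S}^3$ is parallelizable. By Theorem~\ref{teom}, every hyperplane section of $B$ through the origin has a center of symmetry, and Larman's False Center Theorem~\ref{teol} then forces either $B$ to be an ellipsoid (in which case we are done) or $B$ to be centrally symmetric about the origin. Assuming the latter, I would fix a representative $K\subset\R^3$ of the common affine class of sections whose John ellipsoid is the unit ball, and set $G=\{g\in SO_3\mid g(K)=K\}$. The existence of a complete turning of $K$ tangent to $\mathbb{S}^3$ then amounts to the statement that the characteristic map $\chi_3:\mathbb{S}^2\to SO_3$ of $T\mathbb{S}^3$ factors through $G$.

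Since $G$ is a closed subgroup of $SO_3$, it is either all of $SO_3$, a conjugate of $SO_2$, or finite. If $G=SO_3$ then $K$ is a ball, every section of $B$ is an ellipsoid, and $B$ is an ellipsoid. If $G$ is conjugate to $SO_2$, then $K$ is an affine body of revolution; one would then need the $n=3$ analogue of Theorem~\ref{thmAn} (explicitly left open by the author), which would produce at least one elliptical section of $B$; Lemma~\ref{lemaun1} together with a propagation argument between neighbouring sections should then upgrade this single elliptical section to the conclusion that $B$ itself is an ellipsoid. The third case, finite $G$, is the new difficulty and is where I expect the real work to happen.

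The main obstacle is that the topological method used throughout the paper fails here: in the even-$n$ case finite $G$ is excluded because $e\circ\chi_n:\mathbb{S}^{n-1}\to\mathbb{S}^n$ is null-homotopic while $\chi_n$ has nonzero Euler-type degree, whereas now $\chi_3$ is already null-homotopic. I would try to manufacture a substitute obstruction along two routes. First, exploit the convex-geometric continuity of the field $u\mapsto\kappa(u)\in\bf K^4$ to promote intrinsic data of $\kappa(u)$ (fixed directions of $G$, principal axes of the circumscribed ellipsoid, equivariant frames coming from the finite symmetry pattern of $K$) to sections of auxiliary bundles over $\mathbb{S}^3$, looking for a nontrivial class in $\pi_3(SO_3/G)$ that cannot be killed by the freedom available. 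Second, use the $Spin_3\cong SU_2$ double cover together with a Dynkin-index analysis in the spirit of Lemma~\ref{lemG}, lifting the obstruction into $H_*(SU_2;\mathbb{Z}_2)$ where nontriviality of the induced map should persist. Either route demands a genuinely convex-geometric rigidity input beyond pure bundle theory; the parallelizability of $\mathbb{S}^3$ strips away every obstruction of the kind the paper exploits elsewhere, and this is exactly why $n=3$ is, and is expected to remain, the hardest open instance of the Banach conjecture.
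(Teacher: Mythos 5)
The statement you were asked about is not proved in the paper at all: it is stated as Conjecture~\ref{conj}, the first open case ($n=3$, i.e.\ $\dim V=4$, so $n\equiv 3$ bmod $4$) of the real Banach conjecture, and the paper's Main Theorem~\ref{teomain} deliberately excludes it. Your proposal, to its credit, diagnoses this correctly and does not actually claim to close the argument --- but as a proof it has genuine gaps at exactly the places you flag. First, in the case where $G$ is finite (or trivial), there is no obstruction to be found by any refinement of the paper's method: since $T\mathbb{S}^3$ is trivial, the characteristic map $\chi_3:\mathbb{S}^2\to SO_3$ is null-homotopic and therefore factors through \emph{every} subgroup $G\subset SO_3$, including the trivial one. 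Your two proposed substitutes do not repair this: a class in $\pi_3(SO_3/G)$ built from the field $\kappa$ has no reason to be nontrivial (and no mechanism is given for extracting one), and the Dynkin-index argument of Lemma~\ref{lemG} works only because it takes place over $\mathbb{S}^5$, where non-parallelizability of $\mathbb{S}^5$ supplies the final contradiction; over $\mathbb{S}^3$ that contradiction evaporates. Any successful argument here must inject convex-geometric information beyond the reduction of the structure group, which is precisely what is missing.

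Second, even in the body-of-revolution branch ($G$ containing a conjugate of $SO_2$ --- note you omitted the $O_2$-type subgroups, though their identity component reduces this to the same case), you invoke the $n=3$ analogue of Theorem~\ref{thmAn}, which the paper explicitly states is open (``The case $n=3$ remains open''). So that branch also rests on an unproved statement. The parts of your outline that do work --- the use of Theorem~\ref{teom} and Larman's Theorem~\ref{teol} to reduce to the symmetric case, the reduction of the structure group of $T\mathbb{S}^3$ to $G_K$, and the observation that one elliptical section plus affine equivalence of all sections forces $B$ to be an ellipsoid --- are exactly the paper's standing framework; what remains is the whole content of the conjecture, and your proposal correctly identifies, but does not fill, that gap.
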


Indeed, Gromov in his original paper \cite{Gv}, using topology but a complete different sort of ideas,  proved the Banach conjecture over the reals,  
when $ n \equiv 3 $ mod $ 4 $ and dim $V>n+1$ and the Banach conjecture over the complex numbers, when $ n \equiv 3 $ mod $ 4 $ and dim $V>2n-1$.

The purpose of this section is to introduce these deep ideas. Let us prove the Banach conjecture over the reals, when  $n>1$ is odd and dim$V\geq n+2$.  

\smallskip

\begin{teo}[Gromov]\label{teo53}
Let $B$ be an $(n+2)$-dimensional symmetric convex body with center at the origin and suppose 
all $n$-sections through the origin  are linearly  equivalent, $n>1$ odd.  
 Then the convex body $B$ is an ellipsoid.
\end{teo}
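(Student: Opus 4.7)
\emph{The plan} is to push the reduction-of-structure-group strategy of Sections 3 and 4 from the tangent bundle of $\mathbb S^n$ to the tautological $n$-plane bundle over the Grassmannian $G(n,n+2)$, exploiting the extra codimension to produce a topological obstruction that is not available in codimension one for $n$ odd. Following the normalization of \S4.2, I would pick $K\subset\R^n$ linearly equivalent to every $n$-dimensional central section of $B$, scaled so that its minimal-volume circumscribed ellipsoid is the unit $n$-ball, and let $G=\{g\in SO_n:g(K)=K\}\subset SO_n$. The hypothesis provides, for every $W\in G(n,n+2)$, a linear identification $W\cap B\cong K$ well-defined modulo $G$ and continuous in $W$, i.e.\ a $G$-reduction of the structural group of the tautological bundle $\gamma_n\to G(n,n+2)$. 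It then suffices to show that $G$ acts transitively on $\mathbb S^{n-1}$, for then $K$ is a Euclidean ball, every $n$-section of $B$ is an ellipsoid, and since $n\geq 2$ the parallelogram-law argument of \S4.1 forces $B$ to be an ellipsoid.

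To force transitivity I would test the $G$-reduction against a carefully chosen map $j\colon\mathbb S^{n+1}\to G(n,n+2)$ whose pullback $j^*\gamma_n$ has a nontrivial characteristic map $\phi\in\pi_n(SO_n)$ when $n$ is odd. The existence of such a test sphere in the codimension-two Grassmannian is the heart of Gromov's argument and ultimately comes from the splitting $\gamma_n\oplus\gamma_2\cong\underline{\R^{n+2}}$ over $G(n,n+2)$ together with multiplicativity of characteristic classes; the analogous class is not available in codimension one, where the only natural test sphere is $\mathbb S^n$ with its own tangent bundle whose Euler class vanishes for $n$ odd (which is exactly why the codim-$1$ Main Theorem must exclude $n\equiv3\pmod4$). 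The $G$-reduction lifts $\phi$ through the inclusion $i\colon G\hookrightarrow SO_n$, giving $\widetilde\phi\colon\mathbb S^n\to G$ with $i\circ\widetilde\phi\simeq\phi$, and composing with the evaluation $e\colon SO_n\to\mathbb S^{n-1}$, $g\mapsto g(x_0)$, we run the non-transitivity argument used in the even-$n$ paragraph of \S4.2 and in Lemma \ref{lemG}: if $G$ is not transitive on $\mathbb S^{n-1}$ then $e(G)$ misses a point, so $e\widetilde\phi$ is null-homotopic, hence $e\phi$ is null-homotopic, contradicting the nontriviality of $\phi$.

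The principal technical difficulty I anticipate is this nontriviality step: explicitly constructing $j$ and verifying that the resulting class in $\pi_n(SO_n)$ survives restriction to every closed proper subgroup $G\subset SO_n$ that fails to act transitively on $\mathbb S^{n-1}$. The precedent of Lemma \ref{lemG}, where ruling out even the single candidate $\Omega=SO_3\subset SO_5$ required passing to $\mathbb Z_2$-homology and invoking the Dynkin index, strongly suggests that a case-by-case analysis of the classical and exceptional closed subgroups of $SO_n$ in the spirit of the Cadek--Crabb table used in \S4.2 cannot be avoided; this bookkeeping, rather than the setup itself, is what Gromov means by a ``completely different sort of ideas'' from those of Sections 3 and 4.
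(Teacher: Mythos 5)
Your overall scheme---reduce the structure group of the tautological bundle over $G(n,n+2)$ to $G_K$, then contradict non-transitivity by pulling back along a test sphere $j\colon\mathbb{S}^{n+1}\to G(n,n+2)$ with nontrivial clutching class---has a genuine gap at exactly the step you flag as the ``principal technical difficulty'': the obstruction class you need does not exist in general. For any map $j\colon\mathbb{S}^{n+1}\to G(n,n+2)$ the complementary bundle $j^*\gamma_2$ is a rank-$2$ bundle over $\mathbb{S}^{n+1}$, hence trivial (since $\pi_n(SO_2)=0$ for $n\geq2$), so $j^*\gamma_n\oplus\underline{\R^2}\cong\underline{\R^{n+2}}$: the splitting $\gamma_n\oplus\gamma_2\cong\underline{\R^{n+2}}$ that you invoke in favor of nontriviality in fact forces $j^*\gamma_n$ to be stably trivial, killing all rational characteristic classes. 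Thus your $\phi$ must lie in $\ker\bigl(\pi_n(SO_n)\to\pi_n(SO_{n+2})\bigr)$, and moreover you need not merely $\phi\neq0$ but $e_*\phi\neq0$ in $\pi_n(\mathbb{S}^{n-1})$ (a null-homotopy of $e\phi$ only contradicts nontriviality of $e_*\phi$, not of $\phi$). Already for $n=3$, a case the theorem must cover, this is impossible: $\pi_3(SO_3)\to\pi_3(SO_5)$ is injective (the generator has $p_1\neq0$, a stable invariant), so every rank-$3$ subbundle of a trivial rank-$5$ bundle over $\mathbb{S}^4$ is trivial and no test sphere exists. For larger odd $n$ the kernel is a small finite group and whether it meets $e_*\neq0$ is a delicate unstable question (of vector-field type) that you have not addressed. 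Finally, the fallback you propose---a Cadek--Crabb style case analysis of subgroups of $SO_n$---would reintroduce precisely the obstacles (bodies of revolution, exceptional groups, $E_7$) that make the codimension-one problem hard for $n\equiv3\bmod4$, so it cannot be the intended ``completely different sort of ideas.''

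The paper's (Gromov's) proof avoids structure-group obstructions and subgroup classification altogether. One considers the two projections $p_1,p_2\colon V_{n+2,4}\to V_{n+2,2}$ of the Stiefel manifold of $4$-frames, fixes a $2$-plane $\Delta$, and lets $V\subset V_{n+2,2}$ be the closed set of $2$-frames spanning planes whose section of $B$ is linearly equivalent to $\Delta\cap B$. Using local trivializations of the field of convex bodies (minimal-ellipsoid normalization plus Gram--Schmidt) one shows $p_2|\colon p_1^{-1}(V)\to V_{n+2,2}$ is a locally trivial bundle with fiber the corresponding set $V'\subset V_{n,2}$; then Lemma \ref{lempro} (Gromov's Proposition 3, resting on Borel's computation that a top-degree rational cohomology class of $V_{n,2}$ is hit from the larger Stiefel manifold when $n$ is odd, fed into the spectral sequence of this bundle) forces $V=V_{n+2,2}$. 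Hence all $2$-dimensional central sections of $B$ are linearly equivalent, which gives that $B$ is an ellipsoid. So the decisive input is a rational-cohomology statement about Stiefel manifolds applied to the set of ``good'' $2$-planes, not a lifting obstruction in $\pi_n(SO_n)$; to repair your argument you would need to replace the nonexistent test-sphere class by something of this kind.
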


Denote by $V_{n,k}$ the space of all orthonormal $k$-frames $(e_1,\dots,e_k)$, where $e_i\in \mathbb R^n$, $n\geq k.$
For our purpose, consider the space of $4$-frames $(e_1,e_2,e_3,e_4)$ in $\mathbb R^{n+2}$ and also the following two fiber bundles 
\medskip
 
 $$p_1: V_{n+2,4}\to V_{n+2,2},$$
$$p_2: V_{n+2,4}\to V_{n+2,2},$$
where
$$p_1(e_1,e_2,e_3,e_4)=(e_1,e_2),$$
and 
$$p_2(e_1,e_2,e_3,e_4)=(e_3,e_4).$$

\medskip

  The fiber in both cases  is   \emph{the Stiefel Manifold} $V_{n,2}$.  For more about Stiefel fiber bundles, see the book \cite{MiS}.
  
  \smallskip
  
  Consider now a nonempty closed subset $V\subset V_{n+2,2}$ and denote by: 
  $$\tilde V=p_1^{-1}(V)= \{((e_1,e_2,e_3,e_4)\in V_{n+2,4}\mid (e_1,e_2)\in V\}.$$
  
  \smallskip
  
  The following lemma is Proposition 3 of Gromov's paper \cite{G}.

 \begin{lema}\label{lempro}
 If $n$ is odd and the restriction $p_2|:\tilde V\to V_{n+2,2}$ is a fiber bundle, then $V=V_{n+2,2}$.
\end{lema}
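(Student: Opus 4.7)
My plan is a proof by contradiction. Assume $V \subsetneq V_{n+2,2}$ is a proper nonempty closed subset, and yet $p_2|_{\tilde V}\colon \tilde V\to V_{n+2,2}$ is a fiber bundle with common fiber $F$; I want to derive a topological contradiction from the hypothesis that $n$ is odd. The first move is to exploit the $\mathbb{Z}/2$-symmetry of the setup: the swap involution $\sigma\colon V_{n+2,4}\to V_{n+2,4}$ defined by $\sigma(e_1,e_2,e_3,e_4)=(e_3,e_4,e_1,e_2)$ satisfies $p_1\circ\sigma=p_2$ and $p_2\circ\sigma=p_1$, so $\sigma(\tilde V)=p_2^{-1}(V)$ and the hypothesis on $p_2|_{\tilde V}$ is equivalent to $p_1|_{p_2^{-1}(V)}$ being a fiber bundle with the same fiber $F$. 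Moreover, for any $(e_3,e_4)\in V_{n+2,2}$ the fiber of $p_2|_{\tilde V}$ over $(e_3,e_4)$ embeds naturally as a closed subset of the Stiefel fiber $V_{n,2}$ of the ambient bundle $p_2$.

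Second, I would show that if $V$ is proper then $F$ must also be proper in $V_{n,2}$: choosing $(e_1^*,e_2^*)\notin V$ together with an orthogonal 2-frame $(e_3,e_4)$ (which exists since $n\geq 3$) exhibits a point in the Stiefel fiber over $(e_3,e_4)$ that does not lie in the corresponding fiber of $p_2|_{\tilde V}$. Combining this with the local triviality of $p_2|_{\tilde V}$ shows that $\tilde V$ is a proper sub-fiber bundle of the ambient Stiefel bundle $p_2\colon V_{n+2,4}\to V_{n+2,2}$, and the existence of such a sub-bundle is tantamount to a reduction of the structure group of $p_2$ from $O(n)$ to the subgroup preserving the inclusion $F\subsetneq V_{n,2}$.

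The contradiction will then come from an obstruction-theoretic calculation. I would use the identification of $V_{n+2,2}$ with the unit tangent bundle of $\mathbb{S}^{n+1}$, together with the description of the rank-$n$ vector bundle underlying $p_2$ as the orthogonal complement of the tautological 2-plane over $V_{n+2,2}$. Its top characteristic class in $H^{n+1}(V_{n+2,2})$ is non-trivial precisely when $\chi(\mathbb{S}^{n+1})\neq 0$, i.e., when $n+1$ is even, which is exactly the case $n$ odd. A proper reduction of structure group would annihilate this class, yielding the desired contradiction; hence $F=V_{n,2}$, $\tilde V=V_{n+2,4}$, and $V=V_{n+2,2}$.

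The main obstacle I anticipate is this last step. Pinpointing the precise characteristic class (a mod-$2$ Euler class, most likely) and carrying out the computation takes real work, and there is an additional technical wrinkle: $V$ is only assumed to be closed, so classical obstruction theory must be formulated in a framework (e.g., \v{C}ech cohomology) that does not require $V$ to be a CW-complex or a manifold. This is the technical heart of Gromov's original argument, and it is where the parity hypothesis enters decisively: for even $n$ the analogous Euler number vanishes, so a different mechanism would be needed.
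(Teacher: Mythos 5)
Your reduction to the statement ``if $V$ is proper then each fiber $F=\tilde V\cap V_{n,2}$ of $p_2|_{\tilde V}$ is a proper closed subset of the Stiefel fiber'' is fine, but the final obstruction step, which is where the parity of $n$ must enter, does not work as proposed. The rank-$n$ vector bundle underlying $p_2$ is the orthogonal complement $\gamma^\perp$ of the tautological $2$-plane over $V_{n+2,2}$; over the Stiefel manifold (as opposed to the Grassmannian) the tautological $2$-plane is trivialized by the frame itself, so $\gamma^\perp$ is stably trivial and all of its Stiefel--Whitney and Pontryagin classes vanish. Its Euler class lives in $H^{n}(V_{n+2,2};\mathbb{Z})$ (not $H^{n+1}$), and for $n$ odd this group is $\mathbb{Z}/2$ and injects under mod $2$ reduction into $H^{n}(V_{n+2,2};\mathbb{Z}/2)$, where the image would be $w_n(\gamma^\perp)=0$; hence the Euler class is zero as well. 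So there is no nontrivial characteristic class of this bundle to be ``annihilated,'' and the link you draw to $\chi(\mathbb{S}^{n+1})$ conflates $\gamma^\perp$ with the tangent bundle of $\mathbb{S}^{n+1}$ (whose Euler class in any case pulls back to zero on the unit tangent bundle). Two further steps are also unjustified: the hypothesis that $p_2|_{\tilde V}$ is a fiber bundle in the abstract sense does not by itself give a reduction of the $O(n)$-structure group of $p_2$ to the stabilizer of $F$ (the local trivializations of $\tilde V$ need not be induced by $O(n)$-trivializations of the ambient bundle), and even granted such a reduction, a reduction to the stabilizer of a proper closed $F\subsetneq V_{n,2}$ does not annihilate an Euler class (compare the reduction from $O(n)$ to $SO(n)$).

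The argument the paper sketches (Gromov's) uses the cohomology of the total space rather than characteristic classes of the base bundle. If $V'=\tilde V\cap V_{n,2}$ were a proper closed subset of the $(2n-3)$-manifold $V_{n,2}$, then $\check H^{2n-3}(V';\mathbb{Q})=0$; since $n$ is odd, $V_{n+2,2}$ is a rational cohomology $(2n+1)$-sphere, so in the spectral sequence of $p_2|:\tilde V\to V_{n+2,2}$ every $E_2^{p,q}$ with $p+q=2n-3$ vanishes and hence $H^{2n-3}(\tilde V;\mathbb{Q})=0$. On the other hand $\tilde V=p_1^{-1}(V)$ contains an entire fiber $V_{n,2}$ of $p_1$, and by Borel's theorem the restriction $H^{2n-3}(V_{n+2,4};\mathbb{Q})\to H^{2n-3}(V_{n,2};\mathbb{Q})$ is nonzero for $n$ odd; this map factors through $H^{2n-3}(\tilde V;\mathbb{Q})$, a contradiction. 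The parity hypothesis thus enters through the rational cohomology of Stiefel manifolds (Borel's computation of how the top class of the fiber behaves in $V_{n+2,4}$), not through an Euler number of $\mathbb{S}^{n+1}$; to repair your proof you would need to replace the characteristic-class step by an argument of this kind.
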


We give only a brief sketch of the main ideas of the proof. We must consider an arbitrary fiber $V_{n,2}$ of $p_2$ and prove that the intersection 
$V'=\tilde V \cap V_{n,2}$ coincide with $V_{n,2}$. Note that the dimension of $V_{n,2}=2n-3$. In fact, if $V'\not=V_{n,2}$, then $H^{2n-3}(V;\mathbb Q)=0$ and for $p+q=2n-3$, the second term 
$E^{p,q}_2$ in the spectral sequence of the fiber bundle $p_2|:\tilde V\to V_{n+2,2}$ is trivial, which implies that $H^{2n-3}(\tilde V;\mathbb Q)$ is trivial, contradicting an old result of Borel in \cite{Bo}, p. 192, that claim that for $n=$ odd, the homomorphism induced by the inclusion 
$$H^{2n-3}(V_{n+4,2};\mathbb Q)\to H^{2n-3}(V_{n,2};\mathbb Q)$$
is non zero.
\bigskip

\noindent \emph{Proof of Theorem \ref{teo53}}. By hypothesis there is a symmetric convex body $K\subset \mathbb R^n$ with the property that the ellipsoid of minimal volume containing $K$ is the unit ball of $\mathbb R^n$ and such that every $n$-dimensional section of $B$ through the origin  is linearly equivalent to $K$.  Let us denote, as usual, by $G_K$  the Lie group of all linear isomorphisms of  $\mathbb R^n$ that keep $K$ fixed. Of course,  $G_K\subset O_n$.

Let us fix a $2$-dimensional plane $\Delta$ through the origin and define 
$V\subset V_{n+2,2}$ be the set of $2$-frames $(e_1,e_2)$ in $\mathbb R^{n+2}$ such that if $<e_1,e_2>$ is the subspace spanned by $e_1$ and $e_2$, then the section
$<e_1,e_2>\cap B$ is linearly equivalent to the section $\Delta \cap B.$
Furthermore, let $V'\subset V_{n,2}$ be the set of $2$-frames $(e_1,e_2)$ in $\mathbb R^n$ such that 
$<e_1,e_2>\cap K$ is linearly equivalent to $\Delta \cap B.$ Finally, let $\tilde V=p_1^{-1}(V)= \{((e_1,e_2,e_3,e_4)\in V_{n+2,4}\mid (e_1,e_2)\in V\}.$

We shall first prove that the restriction $p_2|:\tilde V\to V_{n+2,2}$ is a locally trivial bundle with fiber $V'$. 
For that purpose, consider $U$ an open contractible subset of $V_{n+2,2}$.  Then, using the contractibility of $U$ and the existence of a field of convex bodies lineally equivalent to $K$, contained in the fibers of the canonical vector bundle of $n$-subspaces in $\mathbb R^{n+2}$, 
it is possible to construct a continuous map $\Lambda:U\to GL(n,n+2)$ satisfying the following properties: 
\begin{enumerate}
\item For every $(e_3,e_4)\in U$, $\Lambda_{e_3,e_4}:\mathbb R^n\to \mathbb R^{n+2}$ is a linear embedding,
\item For every $(e_3,e_4)\in U$, $\Lambda_{e_3,e_4}(\mathbb R^n)$ is orthogonal to both $e_3$ and $e_4$,
\item For every $(e_3,e_4)\in U$, $\Lambda_{e_3,e_4}(K)= \Lambda_{e_3,e_4}(\mathbb R^n)\cap B.$
\end{enumerate}

Given a pair of linearly independent vector $(w_1,w_2)$, denote by $\big(GS^1(w_1,w_2), GS^2(w_1,w_2)\big)$ the $2$-frame obtained from  
$(w_1,w_2)$ by the Gram-Schmidt procedure in such way that $<w_1,w_2>=<GS^1(w_1,w_2), GS^2(w_1,w_2)>.$

Define the following fiber preserving map: 
$$\Phi:U\times V' \to V_{n+2,4},$$
given by  $\Phi\big((e_3,e_4), (e_1,e_2)\big)= 
\big(GS^1(\Lambda_{e_3,e_4}(e_1),\Lambda_{e_3,e_4}(e_2)), GS^2(\Lambda_{e_3,e_4}(e_1),\Lambda_{e_3,e_4}(e_2)), e_3, e_4\big)$.

First of all, by (2), $\big(GS^1(\Lambda_{e_3,e_4}(e_1),\Lambda_{e_3,e_4}(e_2)), GS^2(\Lambda_{e_3,e_4}(e_1),\Lambda_{e_3,e_4}(e_2)), 
e_3, e_4\big)\in V_{n+2,4}$. 
Moreover, by (1), $\big(GS^1(\Lambda_{e_3,e_4}(e_1),\Lambda_{e_3,e_4}(e_2)), GS^2(\Lambda_{e_3,e_4}(e_1),\Lambda_{e_3,e_4}(e_2))\big)\in V$ and therefore 
$ \big(GS^1(\Lambda_{e_3,e_4}(e_1),\Lambda_{e_3,e_4}(e_2)), GS^2(\Lambda_{e_3,e_4}(e_1),\Lambda_{e_3,e_4}(e_2)), 
e_3, e_4\big)\in  p_2|^{-1}(U)$. Hence,
we obtain a fiber preserving homeomorphism 

$$
\begin{tikzcd}[row sep=large]
U\times V'\arrow[swap,"proj",d] \arrow[r, hook,"\Phi"] & p_2|^{-1}(U)\arrow[d, "p_2|"]\\
U \arrow[swap,r,"id"] &U
\end{tikzcd}
$$
thus proving that $p_2|:\tilde V\to V_{n+2,2}$ is a locally trivial bundle with fiber $V'$. Furthermore, $p_2$  is a fiber bundle with 
structure  group $G_K$.  If this is so, by Lemma \ref{lempro}, $V=V_{n+2,2}$.  This implies that for every two planes through the origin, the corresponding sections of $B$ are linearly equivalent and hence that $B$ is an ellipsoid. \qed

\bigskip

 \subsection{The complex Banach conjecture}

The fifth meaning of equal is complex affinely equivalence. 

\smallskip

\noindent \emph{Let $V$ be a finite dimensional Banach space over the complex numbers all of whose hyperplane subspaces are isometric to each other. Is it true that $V$ is a Hilbert space?} 

\smallskip

Our next purpose is to prove that the above problem is equivalent to the following geometric problem. We need first some definitions. 

\smallskip

Let $\mathbb{S}^1$ be the space of all unit complex numbers $\mathbb{C}$. 
Let $A$ be a subset of complex space $\mathbb{C}^n$.  We say that $A$ is \emph{complex symmetric} if and only if there is a translated copy $A'$ of $A$ such that 
$\xi A'=A'$, for every $\xi\in \mathbb{S}^1$. In this case, if  $A'=A-x_0$, we say that $x_0$ is the center of complex symmetry of $A$.
If $-A$ is a translated copy of $A$, then we just say that $A$ is \emph{symmetric}.    It will be useful to consider the empty set as a complex symmetric set.  Note that a \emph{ compact  convex set $A\subset \mathbb{C}^n$ is complex symmetric with center at $x_0$  if and only if for every complex line $L$ through $x_0$, the section $L\cap A$ is a disk centered at $x_0$.}  Of course, any complex $k$-plane or  a ball in a finite dimensional Banach space over the complex numbers  is complex symmetric. 
A complex ellipsoid is the image of a ball under a complex affine transformation. Thus, balls of finite dimensional Hilbert spaces are complex ellipsoids.  Of course, complex ellipsoids are complex symmetric spaces.  With this definitions  in mind, we may state the following problem equivalent to the complex Banach conjecture 

\smallskip

\noindent \emph{If all complex hyperplane sections through the origin of a convex body $B\subset \mathbb{C}^{n+1}$ with the origin as center of complex symmetry  are complex linearly equivalent, 
is the convex body $B$  a complex ellipsoid.}

  \smallskip
  
  As was already informed, this problem has a positive answer when $n=$ even (Gromov \cite{G}) and when $n\equiv 1$ mod $4$ (Bracho and Montejano \cite{BM}). The purpose of this section is to give a brief summary of the ideas and techniques used in the proof.
 
 \smallskip
 
 This time, unlike the real case in which we use the principal bundle $SO_n\hookrightarrow SO_{n+1}\to \mathbb{S}^{n}$, we will use the corresponding principal bundle  $SU_n\hookrightarrow SU_{n+1}\to \mathbb{S}^{2n+1}$. Here $SU_n$ is the group of complex isometries of determinant $1$ in $\mathbb C^n$ and we say that the structure group of the principal bundle $SU_n\hookrightarrow SU_{n+1}\to \mathbb{S}^{2n+1}$ can be reduced to $G\subset SU_n$ if 
 the characteristic map $\chi_n :\mathbb S^{2n}\to SU_n$ of the complex bundle factorize through  $G$, that is, there is a map 
 $f:\mathbb S^{2n}\to G$ such
  the following diagram commutes up to homotopy, where $i:G\to SU_n$ is the inclusion
  \medskip
$$
\begin{tikzcd}[column sep=small]
\mathbb S^{2n}\arrow[dr,"f",swap] \arrow[rr, "\chi_n"] & &SU_n \\
&G.\arrow[ur,"i",swap]&
\end{tikzcd}
$$

 \smallskip

 Denote by $GL'_n(\mathbb{C})$ the group of complex linear isomorphisms of $\mathbb{C}^{n}$ with determinant a positive real number. Note that if $K_1$ and $K_2$ are 
complex symmetric convex bodies in  $\mathbb{C}^n$ which are  complex linearly equivalent, then there is $g\in GL'_n(\mathbb{C})$ such that $g(K_1)=K_2$. 

Given a complex symmetric convex body  $K\subset \mathbb{C}^n$, let $G_K:=\{g\in GL'_n(\mathbb{C})| g(K)=K\}$ be the  {\em group of complex linear isomorphisms of $K$ with positive real determinant}.   By Lemma~1 of Gromow \cite{G} there exists a complex ellipsoid of minimal volume containing $K$  centered at the origin. Suppose now that this minimal ellipsoid is the $(2n-1)$-dimensional unit ball, then every  $g\in G_K$ is actually an element of $SU_n$, because it fixes the unit ball, so in this case, $G_K:=\{g\in SU_n\mid g(K)=K\}$.

The link between our geometric problem and the  topology  is via the following lemma:
\begin{lema}\label{lema:Ckey} Let $B\subset \mathbb{C}^{n+1}$, $n\geq2$, be a complex symmetric convex body with center at the origin all of whose complex hyperplane sections through the origin are  complex linearly equivalent. Then there exists a complex symmetric convex body $K\subset \mathbb{C}^n$ with center at the origin and with the property that every complex hyperplane section of $B$ is  complex linearly equivalent to $K$ and such that 
the structure group of  the principal fibre bundle $SU_n\hookrightarrow SU_{n+1}\to \mathbb{S}^{2n+1}$ can be reduced to  $G_K\subset SU_n$. 
\end{lema}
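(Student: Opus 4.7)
The plan is to follow the real-case template from Section~3 of the paper, adapted to the complex setting with $SU_n$ replacing $SO_n$, and built around a field of complex sections of $B$ inside the complex orthogonal complement bundle of $\mathbb{S}^{2n+1}\subset \mathbb{C}^{n+1}$.

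First I would normalize $K$. Fix any complex hyperplane $H_0\subset \mathbb{C}^{n+1}$ through the origin and set $K_0 = H_0\cap B$; this is complex symmetric, and by hypothesis every other complex hyperplane section of $B$ through the origin is complex linearly equivalent to $K_0$. By Gromov's Lemma~1 (as cited in the paper) $K_0$ admits a unique minimal enclosing complex ellipsoid $E_0$ centered at the origin. Choose a complex linear isomorphism $T:H_0\to \mathbb{C}^n$ with $T(E_0)$ equal to the unit ball $\mathbb{B}^{2n}$, and let $K=T(K_0)$. Then $K\subset \mathbb{C}^n$ is a complex symmetric convex body centered at the origin whose minimal enclosing complex ellipsoid is $\mathbb{B}^{2n}$, and every complex hyperplane section of $B$ through the origin is complex linearly equivalent to $K$. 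Uniqueness of the minimal ellipsoid forces any $g \in GL(n,\mathbb{C})$ with $g(K)=K$ to preserve $\mathbb{B}^{2n}$, hence to lie in $U(n)$; imposing positive real determinant gives $G_K\subset SU_n$, as recorded in the paper. Next I would build the field of convex bodies $\kappa(u):=u^\perp\cap B$ for $u\in \mathbb{S}^{2n+1}$, where $u^\perp$ is the complex orthogonal complement. This is a continuous field of complex symmetric convex bodies, each complex linearly equivalent to $K$, living in the fibers of the complex tangent bundle of $\mathbb{S}^{2n+1}$, whose special unitary frame bundle is precisely $SU_{n+1}\to \mathbb{S}^{2n+1}$.

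To reduce the structure group to $G_K$, I would cover $\mathbb{S}^{2n+1}$ by the two closed hemispheres $D_\pm$ meeting along the equator $\mathbb{S}^{2n}$. Over each contractible $D_\pm$ the complex tangent bundle is trivializable; moreover, since for every $u$ the set of complex linear isomorphisms $\mathbb{C}^n\to u^\perp$ sending $K$ to $\kappa(u)$ is a nonempty torsor under the stabilizer of $K$, contractibility of $D_\pm$ lets me pick continuous families $\psi_\pm(u):\mathbb{C}^n\to u^\perp$ of complex linear isomorphisms with $\psi_\pm(u)(K)=\kappa(u)$ for every $u\in D_\pm$. On the equator, the transition $g(u):=\psi_+(u)^{-1}\psi_-(u):\mathbb{S}^{2n}\to GL(n,\mathbb{C})$ takes values in the stabilizer of $K$, which by the previous paragraph is contained in $U(n)$. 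Finally, $\det\circ g:\mathbb{S}^{2n}\to U(1)$ is nullhomotopic (since $\pi_{2n}(U(1))=0$ for $n\geq 1$), so it lifts to some $\alpha:\mathbb{S}^{2n}\to \mathbb{R}$; the complex symmetry of $K$ puts $e^{i\theta}I$ in the stabilizer of $K$ for every $\theta$, so $g'(u):=e^{-i\alpha(u)/n}g(u)$ is a map $\mathbb{S}^{2n}\to G_K$ homotopic to $g$ in $U(n)$, yielding the desired factorization of the characteristic map through $G_K$.

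The main obstacle I anticipate is identifying the transition $g'$ constructed above with the characteristic map $\chi_n$ of the principal bundle $SU_{n+1}\to \mathbb{S}^{2n+1}$ up to homotopy in $SU_n$. The hemispherical decomposition produces trivializations of $T_{\mathbb{C}}\mathbb{S}^{2n+1}$ that are a priori only $GL(n,\mathbb{C})$-trivializations, and one must compare the resulting $GL$-valued transition with the unitary characteristic map of the principal $SU_n$-bundle. The deformation retraction $GL(n,\mathbb{C})\simeq U(n)$, together with the observation that the stabilizer of $K$ already sits inside $U(n)$ thanks to the minimal-ellipsoid normalization, should carry this through, but the bookkeeping of compatibly adjusting $\psi_\pm$ to be special unitary (via polar decomposition and a phase choice as in the determinant argument above) is where most of the technical work lies.
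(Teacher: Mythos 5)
Your proposal is correct and follows essentially the same route the paper itself uses for this reduction (it is the complex analogue of the Section~3 template, which is how the cited Bracho--Montejano argument proceeds): normalize $K$ by its minimal complex ellipsoid so that the stabilizer lands in $U(n)$ and $G_K\subset SU_n$, take the tautological field $u\mapsto u^\perp\cap B$ in the complex orthogonal-complement bundle associated to $SU_n\hookrightarrow SU_{n+1}\to\mathbb{S}^{2n+1}$, trivialize over the two hemispheres, and push the equatorial transition into $G_K$ using the complex symmetry of $K$ to kill the determinant phase. The final identification you worry about is standard and goes through as you expect: any two clutching functions of the same rank-$n$ complex bundle over $\mathbb{S}^{2n+1}$ are homotopic in $GL(n,\mathbb{C})\simeq U(n)$, and since $\pi_{2n}(SU_n)\to\pi_{2n}(U(n))$ is an isomorphism, your $G_K$-valued transition is homotopic to $\chi_n$ inside $SU_n$, which is exactly the required factorization (the only other elided point, the continuous choice of $\psi_\pm$ over the contractible hemispheres, is the same local-triviality fact the paper itself invokes without proof in the real case).
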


\smallskip 

Our main interest naturally lies in studying the structure groups of the principal bundle $\xi_n$: $SU_n\hookrightarrow SU_{n+1}\to \mathbb{S}^{2n+1}$. In particular, if $n\equiv 0$ mod $2$, $\xi_n$ cannot be reduced to a proper subgroup of $SU_{n-1}$ [Theorem 1B of Leonard \cite{L}]. Therefore, under the hypothesis of Lemma \ref{lema:Ckey}, $G_K$ must be $SU_n$, and hence $K$ must be a ball. This implies that every section of $B$ is a complex ellipsoid.  
Of course, every section of a complex symmetric body $B\subset \mathbb{C}^{n+1}$ is a complex ellipsoid only if $B$ is a complex ellipsoid. See
Lemma 3.3 of \cite{BM}. This proves the complex Banach conjecture, when $n$ is even. 
 
 \smallskip 
 
 For the case  $n\equiv 1$ mod $4$, the proof requires first studying the case in which $G_K\subset SU_n$ is irreducible. If so, the topology of compact Lie groups over the complex numbers is simpler than over the real numbers and then it is possible to prove, in a similar way to the real case, that $G_K=SU_n$. If this is the case, then every section of $B$ is an ellipsoid and consequently $B$ is also an ellipsoid.  If $G_K\subset SU_n$ is not irreducible but $G_K$ is a proper subgroup of $SU_n$, then we can prove that $G_K=SU_{n-1}$.  To understand the convex geometry of the consequences  of this result, we need the following definition:
 
 \medskip
 
A {\em complex body of revolution} is a complex symmetric convex body $K\subset \mathbb{C}^n$ for which there exists a $1$-dimensional complex subspace $L$ of $\mathbb{C}^n$, called its {\em axis of revolution}, such that for every  affine complex hyperplane $H$ orthogonal to $L$, we have that $H\cap K$ is either empty, a single point or a $(2n-2)$-dimensional ball centered at $H\cap L$.  Of course, $K$ is a convex body of revolution if and only if $G_K=SU_{n-1}$.

With this in mind, it is very clear that what we have obtained is the following theorem:

\begin{teo}\label{thmcCkey} 
Let $B\subset \mathbb{C}^{n+1}$, $n\equiv 1$ mod 4,  $n\geq 5$, be a complex symmetric convex body with center at the origin all of whose complex hyperplane sections through the origin are  complex linearly equivalent. Then, there exists a complex body of revolution $K\subset \mathbb{C}^n$ with center at the origin and with the property that every complex hyperplane section of $B$ through the origin is  $\mathbb{C}$-linearly equivalent to $K$.
\end{teo}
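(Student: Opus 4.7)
The plan is to invoke Lemma \ref{lema:Ckey} to obtain a complex symmetric convex body $K\subset \mathbb{C}^n$ with center at the origin, complex linearly equivalent to every complex hyperplane section of $B$ through the origin, and such that the structure group of the principal bundle $SU_n\hookrightarrow SU_{n+1}\to\mathbb{S}^{2n+1}$ reduces to $G_K\subset SU_n$. The remaining task is to show that either $G_K=SU_n$ (in which case $K$ is a ball and, by Lemma~3.3 of \cite{BM}, $B$ itself is a complex ellipsoid, which is a complex body of revolution in a trivial way) or $G_K$ is conjugate to $SU_{n-1}$ (in which case $K$ is a complex body of revolution by definition).

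I would split the argument according to whether the action of $G_K$ on $\mathbb{C}^n$ is irreducible. Suppose first that $G_K$ acts irreducibly. After replacing $G_K$ by a maximal connected subgroup of $SU_n$ through which $\chi_n$ still factors, the complex analogue of Leonard's theorem forces $G_K$ to be simple, leaving as candidates the unitary $SU_k$, the orthogonal $SO_k$, the symplectic $Sp_k$, the spin groups $Spin_k$, and the exceptional groups $G_2, F_4, E_6, E_7, E_8$. For $n\equiv 1$ mod $4$ and $n\geq 5$, the complex analogue of the C\v{a}dek--Crabb dimension bound yields $\dim G_K\geq 2n-3$, which eliminates the exceptional groups after inspecting their minimal irreducible representations (as in the table of Section~4 for the real case). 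Parity restrictions on irreducible representations of $Spin_k$ that do not factor through $SO_m$ rule out the spin groups, and the requirement that the reduction take place inside the unitary group eliminates the remaining classical candidates. Hence $G_K=SU_n$ and $K$ is a ball.

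Now suppose $G_K$ acts reducibly on $\mathbb{C}^n$; let $\Lambda\subsetneq \mathbb{C}^n$ be a proper $G_K$-invariant complex subspace of smallest positive complex dimension $k$. The reduction of the structure group provides a continuous field of complex $k$-planes tangent to $\mathbb{S}^{2n+1}$ inside $\mathbb{C}^{n+1}$, and classical results on fields of complex planes on odd spheres force $k=1$ or $k=n-1$. Passing to orthogonal complements if necessary, we may assume $G_K$ fixes a complex line, so $G_K$ is contained in a conjugate copy of $SU_{n-1}$. To conclude that $G_K=SU_{n-1}$, it suffices to show that $G_K$ acts transitively on the unit sphere of $\Lambda^{\perp}\cong \mathbb{C}^{n-1}$. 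This is proved by the same evaluation-map technique used for $n=5$ in Section~4: if $G_K$ were non-transitive, then for any factorisation $f\colon \mathbb{S}^{2n}\to G_K$ of $\chi_n$, the composition with the evaluation $e\colon SU_{n-1}\to \mathbb{S}^{2n-3}$ would miss a point and so be null-homotopic, contradicting the non-triviality of $e\circ\chi_n$ forced by the non-parallelizability of $\mathbb{S}^{2n+1}$ as a complex bundle.

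The main obstacle will be the irreducible case: ruling out the irreducible proper subgroups of $SU_n$ for $n\equiv 1$ mod $4$ requires fine homotopy/homology invariants such as the Dynkin index (as in Lemma~\ref{lemG} for the real $n=5$ case) and a careful enumeration of the minimal irreducible representations of the spin and exceptional Lie groups. The reducible case, by contrast, should be a rather direct adaptation of the evaluation-map argument already worked out for $n=5$.
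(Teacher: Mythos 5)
Your proposal takes essentially the same route as the paper: it invokes Lemma \ref{lema:Ckey} to reduce the structure group of $SU_n\hookrightarrow SU_{n+1}\to\mathbb{S}^{2n+1}$ to $G_K$, then splits into the irreducible case (forcing $G_K=SU_n$, so $K$ is a ball) and the reducible case (forcing $G_K=SU_{n-1}$, so $K$ is a complex body of revolution), which is precisely the dichotomy the paper uses to deduce Theorem \ref{thmcCkey}. The Lie-theoretic and homotopy-theoretic details you sketch (complex analogues of Leonard, the dimension bounds, and the evaluation-map argument) are likewise only sketched in the paper, which defers them to \cite{BM}, so your treatment matches the paper's level of rigor and its structure.
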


To conclude, we need to know what are the geometric consequences of all the complex hyperplane sections of a convex body being complex affine bodies  of revolution.

\begin{teo}\label{thm:rev}
A complex symmetric convex body $B\subset \mathbb{C}^{n+1}$ with center at the origin,  $n\geq 4$,  all of whose complex hyperplane sections 
through the origin are  complex affine bodies of revolution has at least one complex hyperplane section through the origin  which is a complex ellipsoid.    
\end{teo}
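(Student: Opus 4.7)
The plan is to imitate the proof of Theorem \ref{thmAn} in the complex setting, arguing by contradiction. I assume that every complex hyperplane section of $B$ through the origin fails to be a complex ellipsoid, produce complex versions of the auxiliary lemmas used in the real case, run the topology on the Hopf sphere $\mathbb{S}^{2n+1}\subset\mathbb{C}^{n+1}$ rather than on $\mathbb{S}^n\subset\mathbb{R}^{n+1}$, and then close with the same "axis‑in‑hyperplane" contradiction.

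First I would install the complex analogs of the bookkeeping lemmas. For each $u\in\mathbb{S}^{2n+1}$, write $u^\perp\subset\mathbb{C}^{n+1}$ for the complex orthogonal hyperplane. The section $u^\perp\cap B$ is, by assumption, a non-elliptical complex affine body of revolution and so, by the complex analog of Corollary \ref{lemapdos}, has a unique complex axis of revolution $L_u$ and a unique complex hyperplane of revolution $H_u\subset u^\perp$; both depend only on $[u]\in\mathbb{CP}^n$ and vary continuously in $u$ as in Lemma \ref{lema:cont}. I would then prove the complex version of Lemma \ref{lemU}: whenever $L_v\subset u^\perp$, one has $H_u\cap v^\perp = H_v\cap u^\perp$. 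The real proof transfers almost verbatim: the codimension-two slice $u^\perp\cap v^\perp\cap B$ contains the axis $L_v$ of $v^\perp\cap B$, so by the complex versions of Lemmas \ref{lemuabset} and \ref{lemaun1} it is a non-elliptical complex affine body of revolution with hyperplane of revolution $u^\perp\cap H_v$; viewed from the $u$-side, the slice cannot equal $H_u$ (otherwise it would coincide with $H_u\cap B$, which is a complex ellipsoid as the equatorial section of $u^\perp\cap B$), so by the same lemmas it has hyperplane of revolution $v^\perp\cap H_u$; uniqueness identifies the two expressions.

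The topological input is now the triviality of complex line subbundles of $T\mathbb{S}^{2n+1}$. Since $L_u\subset u^\perp$ in the complex sense, in particular $L_u$ is real-orthogonal to $u$, so $L_u\subset T_u\mathbb{S}^{2n+1}$. Hence $u\mapsto L_u$ defines an orientable real rank-$2$ subbundle $\mathcal{L}$ of $T\mathbb{S}^{2n+1}$. Because orientable real rank-$2$ bundles on $\mathbb{S}^{2n+1}$ are classified by $\pi_{2n}(SO(2))=0$ for $n\geq 1$, the bundle $\mathcal{L}$ is trivial, and thus admits a continuous unit section $w:\mathbb{S}^{2n+1}\to\mathbb{S}^{2n+1}$ with $w(u)\in L_u$. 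Since $w(u)$ is real-orthogonal to $u$, the straight-line homotopy $(1-t)u+tw(u)$ never vanishes on $\mathbb{S}^{2n+1}$, so $w$ is homotopic to the identity and in particular surjective.

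Putting these together, I fix any $u_0\in\mathbb{S}^{2n+1}$, pick a unit vector $x_0\in H_{u_0}$, and use surjectivity of $w$ to choose $v_0$ with $w(v_0)=x_0$. Then $L_{v_0}=\mathbb{C}\cdot x_0\subset H_{u_0}\subset u_0^\perp$, so the complex Lemma \ref{lemU} yields
$$ L_{v_0}\subset H_{u_0}\cap v_0^\perp \;=\; H_{v_0}\cap u_0^\perp\subset H_{v_0},$$
which is impossible because the axis and the hyperplane of revolution of the complex affine body of revolution $v_0^\perp\cap B$ meet only at the origin. The main obstacle I expect is the uniqueness statement (the complex Corollary \ref{lemapdos}): one must verify that two distinct copies of $S(U_{n-1}\times U_1)$ inside $SU_n$ generate the whole of $SU_n$, so that a complex affine body of revolution with two distinct hyperplanes of revolution is automatically a complex ball. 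Once that is settled, the complex versions of Lemmas \ref{lemuabset} and \ref{lemaun1} transfer routinely since complex affine maps preserve both the structure of complex bodies of revolution and of complex ellipsoids, and the topological trivialization step is immediate from $\pi_{2n}(SO(2))=0$.
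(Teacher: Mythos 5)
Your proposal takes essentially the same route the paper indicates for Theorem \ref{thm:rev} (mirror the proof of Theorem \ref{thmAn}): complex analogs of Lemmas \ref{lemuabset}, \ref{lemaun1}, \ref{lema:cont}, \ref{lemU}, a continuous axis field, and the surjectivity-plus-containment contradiction $L_{v_0}\subset H_{v_0}$, with the real line-field step correctly replaced by triviality of the oriented rank-two (complex line) subbundle of $T\mathbb{S}^{2n+1}$, via $\pi_{2n}(SO(2))=0$. This is sound at the same level of rigor as the paper's own treatment, which likewise leaves the auxiliary uniqueness and continuity lemmas to the reader.
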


The proof of Theorem \ref{thm:rev} is similar to the proof of Theorem \ref{thmAn} except this time the proofs are just technically more complicated.
This concludes an sketch of the proof of the complex Banach Conjecture when $n\equiv 0, 1, 2$ mod $4$, because by Theorems \ref{thmcCkey}
and \ref{thm:rev}, every hyperplane section of $B$ through the origin is a complex ellipsoid and therefore, by Theorem 3.3 of \cite{BM} we obtain that $B$ is a complex ellipsoid as we wished. 

\smallskip

The following theorem follows immediately from Theorems \ref{thmcCkey} and \ref{thm:rev}. It proves the Banach conjecture over the complex numbers for $n\equiv 0,1,2$ mod $4$, and dim $V>n$.

\begin{teo}[Bracho-Montejano \cite{BM}] \label{teocmain}
If all complex hyperplane sections through the origin of a complex symmetric convex body $B\subset \mathbb C^{n+1}$ are linearly equivalent, $n\equiv 0,1,2$ mod $4$, 
 then the convex body $B$ is a complex ellipsoid.
\end{teo}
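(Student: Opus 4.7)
The plan is to deduce the theorem by combining the three heavy results already developed in this subsection: the Key Lemma \ref{lema:Ckey} (reducing the geometric hypothesis to a reduction of the structure group of the principal bundle $\xi_n: SU_n \hookrightarrow SU_{n+1} \to \mathbb{S}^{2n+1}$), Theorem \ref{thmcCkey} (which identifies the model section $K$ as a complex body of revolution when $n \equiv 1 \pmod 4$), and Theorem \ref{thm:rev} (which produces a complex elliptical section of $B$ whenever every section is a complex affine body of revolution). The target intermediate claim is that \emph{every} complex hyperplane section of $B$ through the origin is a complex ellipsoid; once this is achieved, Lemma~3.3 of \cite{BM} forces $B$ itself to be a complex ellipsoid.

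First I would apply Lemma \ref{lema:Ckey} to fix a complex symmetric convex body $K \subset \mathbb{C}^n$ with center at the origin such that every complex hyperplane section of $B$ is $\mathbb{C}$-linearly equivalent to $K$ and such that the structure group of $\xi_n$ reduces to $G_K \subset SU_n$. Then I would split into cases according to the residue of $n$ modulo $4$. If $n$ is even (so $n \equiv 0$ or $2 \pmod 4$), Theorem~1B of Leonard \cite{L} rules out any reduction of $\xi_n$ to a proper subgroup of $SU_{n-1}$; this forces $G_K = SU_n$, whence $K$ is a ball and every section of $B$ is a complex ellipsoid. If instead $n \equiv 1 \pmod 4$, Theorem \ref{thmcCkey} upgrades $K$ to a complex body of revolution, so every complex hyperplane section of $B$ is a complex affine body of revolution; Theorem \ref{thm:rev} then furnishes at least one section that is a complex ellipsoid, and since all sections are pairwise $\mathbb{C}$-linearly equivalent, they are all complex ellipsoids.

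In either case the intermediate claim holds, and an appeal to Lemma~3.3 of \cite{BM} completes the proof. From this vantage point the theorem is an assembly statement: the genuine technical difficulty has been absorbed into the proofs of Theorems \ref{thmcCkey} and \ref{thm:rev}. The former rests on a delicate Lie-theoretic case analysis (irreducible versus reducible $G_K \subset SU_n$, with exceptional groups ruled out by dimension counts and spin representations excluded via the Dynkin index, in the spirit of Lemma \ref{lemG}); the latter is a topological argument on the field of axes of revolution of the sections, parallel to the proof of Theorem \ref{thmAn}. The main obstacle is therefore not in the final assembly but in those background theorems, and it is precisely there that the arguments break down for $n \equiv 3 \pmod 4$, which is why that residue class is excluded from the hypothesis.
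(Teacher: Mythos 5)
Your proposal is correct and follows essentially the same route as the paper: the paper itself obtains Theorem \ref{teocmain} by assembling Lemma \ref{lema:Ckey} with Leonard's Theorem 1B for even $n$, and Theorems \ref{thmcCkey} and \ref{thm:rev} for $n\equiv 1$ mod $4$, then invoking Lemma 3.3 of \cite{BM} once every section is known to be a complex ellipsoid. Your case split and final appeal to that lemma reproduce exactly this assembly.
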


\section{ Convex bodies all whose orthogonal projections are equal}

 The purpose of this section is to answer the following question: 

\medskip

\emph{If all orthogonal projections of a convex body onto hyperplanes are "equal", 
is the convex body  "equal" to the ball? }

\smallskip

\subsection {Equal area, congruence and affine equivalence}

The first meaning of "equal" is same "area". In 1937, Aleksandrov A.D. \cite{A} proved that if all orthogonal projections of a symmetric convex body have the same area then not only does the body have the same volume of the corresponding ball but it is actually a ball.

\begin{teo}[Aleksandrov's projection theorem \cite{A}]\label{teopa}
If all orthogonal projections onto hyperplanes of a symmetric convex body $B\subset \mathbb R^{n+1}$ have equal 
$n$-dimensional volume, then  the convex body $B$ is a ball.
\end{teo}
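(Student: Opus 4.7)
The plan is to follow the classical projection-body approach, in close analogy with the proof of Theorem \ref{teoa}, trading the Funk transform for the cosine transform. First I would express the $n$-volume of the projection $B\mid u^\perp$ through Cauchy's formula
\[
\mathrm{vol}_n(B\mid u^\perp)=\frac{1}{2}\int_{\mathbb S^n}|\langle u,x\rangle|\,dS_B(x),
\]
where $S_B$ is the surface area measure of $B$ on $\mathbb S^n$. Since $B$ is symmetric with center at the origin, $S_B$ is an even measure on the sphere. The hypothesis says that the left-hand side is a constant function of $u\in\mathbb S^n$, so the \emph{cosine transform} of $S_B$ coincides with the cosine transform of a suitable multiple of the standard (round) surface measure $S_{\mathbb B}$ of the ball $\mathbb B$ with the same projection area.

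The heart of the argument is the injectivity of the cosine transform on even measures. This is the analogue, for projections, of the Funk--Hecke result used in the proof of Theorem \ref{teoa}. I would invoke the spherical harmonic decomposition: the cosine transform acts as a Fourier multiplier on $L^2(\mathbb S^n)$ whose eigenvalues on the space of spherical harmonics of degree $2k$ are nonzero (while they vanish on odd degrees). Expanding $S_B-cS_{\mathbb B}$ in spherical harmonics (in the distributional sense) and using evenness of both measures, every homogeneous component must vanish, so $S_B=cS_{\mathbb B}$ as measures on $\mathbb S^n$.

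Having established that the surface area measure of $B$ agrees with that of a ball (of appropriate radius) I would conclude by Minkowski's uniqueness theorem: a convex body in $\mathbb R^{n+1}$ with nonempty interior is determined up to translation by its surface area measure. Hence $B$ is a translate of a ball, and since $B$ is centrally symmetric about the origin, it is a ball centered at the origin.

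The main obstacle, and the place where all the analytic work actually sits, is the injectivity of the cosine transform on even distributions on $\mathbb S^n$. The geometric and symmetry reductions are straightforward; but establishing that the multipliers of the cosine transform on even-degree spherical harmonics are nonzero requires a genuine computation (essentially Funk--Hecke applied to $|\langle u,x\rangle|$) and is the step I would reference rather than reprove, citing Schneider \cite{Sc2}. Note that the evenness of $S_B$ coming from the central symmetry of $B$ is crucial: without it one only recovers the even part of the surface area measure, which is why the conclusion fails without the symmetry hypothesis, exactly as in the sectional case.
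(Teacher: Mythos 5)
Your proof is correct, and in substance it is the same analysis the paper relies on, just unpacked one level deeper. The paper's own argument is a one-line reduction: it quotes Aleksandrov's uniqueness theorem (Theorem 2.11.1 of \cite{MMO}) --- two origin-symmetric convex bodies in $\mathbb R^{n+1}$ with equal projection volumes in every direction are translates of one another --- and applies it, comparing $B$ with a ball of the appropriate radius (so $B$ is a translate of that ball and, being symmetric about the origin, is that ball). What you do instead is reprove the relevant case of that uniqueness statement: Cauchy's projection formula turns the constant-brightness hypothesis into equality of the cosine transforms of $S_B$ and of a suitable multiple of the round surface measure, injectivity of the cosine transform on even measures (the Funk--Hecke multiplier computation, properly delegated to \cite{Sc2}) gives $S_B=c\,S_{\mathbb B}$, and Minkowski's uniqueness theorem finishes, with central symmetry pinning the translate down. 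The paper's route is shorter and structurally parallel to its treatment of the sectional case (Theorem \ref{teoa}); yours is self-contained at the level of the analytic core and has the virtue of making explicit exactly where the symmetry hypothesis enters --- through the evenness of $S_B$ --- which the paper only remarks on informally when noting the result fails without symmetry.
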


  Without the hypothesis of symmetry, Theorem \ref{teopa} is false. But, 
a symmetric convex body all whose orthogonal projections have the same area, not only has the volume of the corresponding ball but it is actually a ball. 
For every $v\in \mathbb S^n$,  denote by  $B|v$  the orthogonal projection of $B$ onto $v^\perp$ and let $\nu(B|v)$ be the $n$-dimensional volume
of $B|v$. The proof of Theorem \ref{teopa} follows immediately from the following Aleksandrov's result (see Theorem 2.11.1 of \cite {MMO}). Given two convex bodies $B^1, B^2\subset \mathbb R^{n+1}$
symmetric with respect to the origin and such that $\nu(B^1|v)=\nu(B^2|v)$, for every $v\in \mathbb S^n$, then $B^1$ is a translated copy of $B^2$.  
The proof of this result is  analytic and a little more complicated than the proof of the Theorem  \ref{teoa}.

Using harmonic integration, it can be proved  that  a symmetric convex body all whose $(n-1)$-dimensional perimeter areas are equal must be a ball. The proof is similar to the proof of Theorem \ref{teoa} but using the support functions instead of the radial functions (see Theorem 4 of \cite{F}). Of course, without the symmetry hypothesis, the result is false as it can be observed with $3$-dimensional convex bodies of constant width 1,
 in which the perimeter of all their orthogonal projections are $\pi$. 
 
 \smallskip

The next meaning of "equal" is congruence. That is, assume that all orthogonal projections onto hyperplanes of the convex body $B\subset \mathbb R^{n+1}$ are congruent. 

The collection of orthogonal projections of $B\subset \mathbb R^{n+1},$
$$\{B|v\}_{v\in \mathbb S^n}$$ 
give rise, not only to a field of convex bodies congruent to $B|e_1$ and tangent to $\mathbb S^n$, but meanly to a complete turning of $B|e_1$,
where $e_1=\{1,0,\dots,\}\in \mathbb R^{n+1}$.  We know that a complete turning is only possible for symmetric convex bodies (see Section 3).  
So, $B|v$ is symmetric for every $v\in \mathbb S^n$ and consequently, it is not very difficult to prove that $B$ is symmetric, but in this last case
Aleksandrov's Theorem \ref{teopa} implies that $B$ is also a ball.  That is, we have the following theorem:

\begin{teo}\label{teopc}
If all orthogonal projections onto hyperplanes of a convex body $B\subset \mathbb R^{n+1}$ are congruent, then  the convex body $B$ is a ball.
\end{teo}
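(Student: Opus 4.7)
The plan is to reduce the claim to Aleksandrov's projection theorem~\ref{teopa} by first showing $B$ must be centrally symmetric, using the complete-turning obstruction from Section~3 as the bridge. Setting $K := B|e_1$, I would build a field of convex bodies $\kappa\colon \mathbb S^n \to {\bf K}^{n+1}$ tangent to the sphere by
$$\kappa(v) = v + (B|v) \subset v + v^\perp,$$
where $B|v$ is the orthogonal projection of $B$ onto $v^\perp$. Continuity of projection in the Hausdorff metric together with the hypothesis that all projections are congruent guarantees that $\kappa$ is a continuous field tangent to $\mathbb S^n$ and congruent to $K$. Since orthogonal projection onto $v^\perp$ coincides with orthogonal projection onto $(-v)^\perp = v^\perp$, one has $B|v = B|(-v)$ as subsets of $v^\perp$, and hence
$$\kappa(v) - v = B|v = B|(-v) = \kappa(-v) + v,$$
so that $\kappa$ is in fact a \emph{complete turning} of $K$ in $\R^{n+1}$.

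Next, I would invoke the main topological consequence developed in Section~3 (the principle underlying Theorem~\ref{teom}, proved there by way of the reduction of the structure group of $T\mathbb S^n$ to $G_K$ together with the hairy-ball obstruction on $\mathbb S^n$): a complete turning of a convex body $K\subset \R^n$ is possible only if $K$ has a center of symmetry. Applied to our $\kappa$, this forces every projection $B|v$, $v\in \mathbb S^n$, to be centrally symmetric.

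The third step is to promote this fibrewise symmetry to central symmetry of $B$ itself. The key observation is that central symmetry of $B|v$ says $(-B)|v = -(B|v)$ is a translate of $B|v$, so $B$ and $-B$ have projections onto every hyperplane that agree up to translation. By Aleksandrov's classical uniqueness theorem for projections---two convex bodies all of whose orthogonal projections onto hyperplanes coincide up to translation must themselves be translates of each other---one concludes that $-B$ is a translate of $B$, i.e.\ $B$ is centrally symmetric.

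Finally, after translating $B$ to place its centre at the origin, and using that congruent projections share the same $n$-dimensional volume, Theorem~\ref{teopa} applies and yields that $B$ is a ball. I expect the third step to be the main obstacle: the author brushes it aside as ``not very difficult'', but it depends on the non-trivial uniqueness-up-to-translation theorem for projections, and a self-contained write-up would need to cite or reprove this classical result; steps one, two and four are essentially formal once the machinery of Section~3 and Theorem~\ref{teopa} is accepted.
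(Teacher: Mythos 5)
Your proposal follows essentially the same route as the paper's own argument: the projections $\{B|v\}_{v\in\mathbb S^n}$ form a complete turning of $K=B|e_1$, the topological obstruction from Section~3 forces every projection to be centrally symmetric, one then deduces that $B$ itself is symmetric, and Aleksandrov's projection theorem (Theorem~\ref{teopa}) concludes that $B$ is a ball. The only difference is that you make explicit the step the paper dismisses as ``not very difficult'' (passing from symmetric projections to symmetry of $B$ via the fact that bodies with translate projections are translates, applied to $B$ and $-B$), and that argument is correct in the ambient dimension $n+1\geq 3$ considered here.
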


Suppose now all orthogonal projections onto hyperplanes of the convex body $B\subset \mathbb R^{n+1}$ are affinely equivalent to a convex body $K$  and  suppose without loss of generality  that the ellipsoid of minimal volume containing $K$ is the unit ball. Denote by $G_K:=\{g\in GL'_n(\mathbb{R})\mid g(K)=K\}\subset SO_n$. As in the case of the hyperplane sections, we have that the existence of the collection of projections $\{B|v\}_{v\in \mathbb S^n}$ give rise directly to the 
following lemma which is the link between the topology and the geometric problem.  Note that from the arguments given in the preceding paragraph and Theorem \ref{teom}, 
we may assume without loss of generality that $B$ and $K$ are symmetric with center at the origin.

\begin{lema}\label{lema:Pkey} Let $B\subset \mathbb{R}^{n+1}$, $n\geq2$, be a symmetric convex body all of whose orthogonal projections onto hyperplanes  are  linearly equivalent. Then there exists a symmetric convex body $K\subset \mathbb{C}^n$, with the property that every orthogonal projection of $B$ onto a hyperplane  is  linearly equivalent to $K$ and such that 
the structure group of  the principal fibre bundle $SO_n\hookrightarrow SO_{n+1}\to \mathbb{S}^{n}$ can be reduced to  $G_K\subset SO_n$. 
\end{lema}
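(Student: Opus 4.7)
The plan is to reduce to the machinery of Section 3 by observing that orthogonal projections produce exactly the same type of ``field of convex bodies tangent to $\mathbb{S}^n$'' as do central sections, and then invoking the characterization of bundle reductions that was already recorded there.

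For each $v\in\mathbb{S}^n$, the orthogonal projection $B|v\subset v^\perp$ depends continuously on $v$ in the Hausdorff metric, and because $B$ is symmetric about the origin, so is each $B|v$. Setting
\[
\kappa_0(v):=v+(B|v)\subset v+v^\perp,
\]
one obtains a continuous field of convex bodies tangent to $\mathbb{S}^n$, all of them linearly equivalent to some fixed symmetric body $K_0\subset\mathbb{R}^n$ by hypothesis. This is exactly a field of convex bodies tangent to $\mathbb{S}^n$ and affinely equivalent to $K_0$ in the sense of Section 3, and the symmetry of $B$ makes it a complete turning (indeed $\kappa_0(-v)=-v+(B|{-v})=-v+(B|v)$).

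Now I would apply the John--ellipsoid normalization recalled in Section 3: for each $v$ let $E_v\subset v^\perp$ be the ellipsoid of minimal volume containing $B|v$; because $B|v$ is centered at the origin, so is $E_v$, and the linear map $h_v:v^\perp\to v^\perp$ sending the principal axes of $E_v$ to an orthonormal frame of $v^\perp$ varies continuously with $v$. Define
\[
\kappa(v):=v+h_v(B|v),\qquad K:=h_{e_{n+1}}(B|e_{n+1})\subset\mathbb{R}^n.
\]
Then $K$ is a symmetric convex body whose minimal containing ellipsoid is the unit ball of $\mathbb{R}^n$, and by the observation recalled in Section 3 (two affinely equivalent bodies sharing the unit ball as their minimal containing ellipsoid are already congruent by an orientation-preserving linear isometry), each fiber $h_v(B|v)$ is congruent to $K$ via an orientation-preserving isometry $v^\perp\to\mathbb{R}^n$. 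In particular $B|v$ itself is linearly equivalent to $K$ for every $v$, which gives the first assertion of the lemma.

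Since we now have a field of convex bodies tangent to $\mathbb{S}^n$ and \emph{congruent} to $K$, the characterization quoted in Section 3 (``there exists a field of convex bodies tangent to $\mathbb{S}^n$ and congruent to $K$ if and only if the characteristic map $\chi_n:\mathbb{S}^{n-1}\to SO_n$ factorizes through $G_K$'') yields immediately a reduction of the structure group of the principal bundle $SO_n\hookrightarrow SO_{n+1}\to\mathbb{S}^n$ to $G_K$, completing the proof. The one point that requires care, which I expect to be the main technical obstacle, is the continuity of $v\mapsto h_v$: this follows from continuous dependence of the L\"owner ellipsoid on the body (a classical consequence of its uniqueness), but must be invoked carefully so that a continuous orthonormal frame of principal axes of $E_v$ can be selected locally in $v$. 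Everything else reduces to machinery already developed for the analogous section problem.
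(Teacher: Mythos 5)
Your proposal is correct and follows essentially the same route the paper intends: the projections $\{B|v\}$ form a complete-turning field of convex bodies tangent to $\mathbb{S}^n$, which after the minimal-ellipsoid normalization of Section 3 becomes a field congruent to $K$, and the quoted characterization then gives the reduction of the structure group of $SO_n\hookrightarrow SO_{n+1}\to\mathbb{S}^n$ to $G_K$. The only caution (which you already flag) is that $h_v$ should be taken as the unique positive-definite linear map carrying the L\"owner ellipsoid $E_v$ to the unit ball rather than a choice of principal axes, since axes need not be selectable continuously when semi-axes coincide; with that reading the continuity argument is exactly the one the paper uses.
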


Once we have this technical lemma, we are in a position to know, using the topological arguments from Section 4.2, how the projections of $B$ are. That is, we have the following theorem:

 \begin{teo}\label{thmcPkey} 
Let $B\subset \mathbb{R}^{n+1}$, $n\equiv 0,1, 2$ mod 4,  $n\geq 2$, be a convex body all of whose orthogonal projections onto hyperplanes are affinely equivalent. Then, there exists a body of revolution $K\subset \mathbb{R}^n$, with the property that every orthogonal projection of $B$ is  affinely equivalent to $K$.
\end{teo}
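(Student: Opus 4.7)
The plan is to reduce the projection theorem to the topological machinery already developed for the section case in Section 4.2, using Lemma \ref{lema:Pkey} as the bridge between the geometric hypothesis on projections and a statement about the structure group of the principal bundle $SO_n \hookrightarrow SO_{n+1} \to \mathbb{S}^{n}$.

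First, I would invoke Lemma \ref{lema:Pkey} to produce a symmetric convex body $K \subset \mathbb{R}^n$, normalized so that its minimal enclosing ellipsoid is the unit ball, such that every projection $B|v$ is linearly equivalent to $K$ and the structure group of the principal bundle $SO_n \hookrightarrow SO_{n+1} \to \mathbb{S}^{n}$ reduces to $G_K \subset SO_n$. Equivalently, the characteristic map $\chi_n : \mathbb{S}^{n-1} \to SO_n$ factorizes, up to homotopy, through $G_K$, which places the problem squarely inside the framework already exploited for sections.

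Second, I would run exactly the case analysis of Section 4.2 on the subgroup $G_K$. When $n$ is even, so $n \equiv 0, 2$ mod $4$, non-transitivity of $G_K$ on $\mathbb{S}^{n-1}$ would make the composition of $\chi_n$ with the evaluation into $\mathbb{S}^{n-1}$ null-homotopic, contradicting the nonzero even degree of that map; hence $G_K$ acts transitively, forcing $K$ to be a ball (a degenerate body of revolution). For $n \equiv 1$ mod $4$ with $n \neq 5$, one splits according to whether $G_K$ acts irreducibly on $\mathbb{R}^n$ or fixes a proper subspace. In the irreducible case, Leonard's theorem restricts $G_K$ to being simple, the Cadek--Crabb obstruction eliminates the classical groups, the parity constraint kills the Spin groups, and the dimension table for exceptional groups rules out every remaining candidate except $E_7 \subset SO_{133}$; so $G_K$ is conjugate to $SO_n$ and $K$ is a ball. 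In the reducible case, the splitting $T\mathbb{S}^{n} = e^1 \oplus \eta^{4k}$ forces the invariant subspace to have dimension $1$ or $n-1$, which pins $G_K$ to a conjugate copy of $SO_{n-1}$ and makes $K$ a body of revolution. For $n = 5$, Lemma \ref{lemG} rules out the $SO_3$ irreducible representation of $SO_5$, and the reducible subcase is handled exactly as in the generic $n \equiv 1$ mod $4$ argument. Assembling the cases, $K$ is always a body of revolution.

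The main obstacle is Lemma \ref{lema:Pkey} itself: one must show that the family $\{B|v\}_{v \in \mathbb{S}^n}$ of projections can be packaged, by a continuous normalization, into a reduction of the principal bundle $SO_n \hookrightarrow SO_{n+1} \to \mathbb{S}^n$ to $G_K$. Unlike in the section setting, where each $u^\perp \cap B$ sits naturally in the tangent hyperplane $u + u^\perp$ and yields a complete turning in the obvious way, the projections $B|v$ all live in hyperplanes through the origin, so one must carefully identify the correct associated bundle and check compatibility on antipodal pairs. The input here is that each $B|v$ is centrally symmetric, which follows from applying the complete-turning obstruction of Section 3 to the projection field; together with the continuity and uniqueness of the minimal enclosing ellipsoid, this supplies the equivariant frame needed to realize the reduction to $G_K$. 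Once that lemma is in place, the remainder is a direct transcription of the section-case topology.
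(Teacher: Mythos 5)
Your proposal follows the paper's own route exactly: it uses Lemma \ref{lema:Pkey} to reduce the structure group of the bundle $SO_n\hookrightarrow SO_{n+1}\to\mathbb{S}^{n}$ to $G_K$ and then runs the Section 4.2 case analysis (even $n$ via transitivity, irreducible case via Leonard/Cadek--Crabb/Spin parity/exceptional-group table, reducible case giving $SO_{n-1}$, and $n=5$ via Lemma \ref{lemG}) to conclude that $K$ is a ball or a body of revolution, which is precisely the paper's argument, and you additionally sketch a justification of Lemma \ref{lema:Pkey} that the paper merely asserts. The only caveat, inherited from the paper itself, is that these arguments do not exclude the $E_7\subset SO_{133}$ possibility, so $n=133$ remains a tacit exception exactly as in Theorem \ref{teomain}.
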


To conclude, we need to know the  geometric consequences of all orthogonal projections of a convex body being  affine bodies  of revolution.
Every orthogonal projection of a body of revolution is a body of revolution, this is why  projections of affine bodies of revolution are affine bodies of revolution. Is the converse true?  As far as I know, nobody knows the answer.  The following geometric question is of great interest.
\emph{Suppose $B$ is an $(n+1)$-dimensional convex body all whose orthogonal projections are affine bodies of revolution, $n\geq3$. Is $B$  an affine body of revolution?}
 
 We shall give a partial answer to this question which will turn out to be sufficiently good for our purposes. Under the same hypothesis of the above question we shall prove that at least one orthogonal projection of $B$ is an ellipsoid.  If this is so, and if in addition, $B$ satisfies the 
 hypothesis that every two or its orthogonal projections are affinely equivalent, then every orthogonal projection of $B$ is an ellipsoid and consequently $B$ is an ellipsoid. The proof of the next theorem is  very similar to the proof of Theorem \ref{thmAn}, with the different adjustments that are always necessary when trying to adapt a proof for sections to one for projections.

 \begin{teo}\label{thmPAn}
Let $B\subset \mathbb{R}^{n+1}$ be a symmetric convex body, $n\geq 4$ and suppose that every orthogonal projection onto hyperplanes of $K$ is an affine body of revolution. Then there is an orthogonal projection of  $B$ which is 
an ellipsoid.
\end{teo}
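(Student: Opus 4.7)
My plan is to follow the proof of Theorem \ref{thmAn}, replacing sections by orthogonal projections throughout. Suppose for contradiction that every $B|u$ is a non-elliptical affine body of revolution; by Corollary \ref{lemapdos}, each $B|u$ then has a unique axis of revolution $L_u\subset u^\perp$ and a unique hyperplane of revolution $H_u\subset u^\perp$, both through the origin by the symmetry of $B$. The projection analogue of Lemma \ref{lema:cont}, proved by the same uniqueness-plus-continuity argument, shows that $u\mapsto L_u$ is a continuous tangent line field on $\mathbb{S}^n$, and one obtains as before a continuous map $\psi\colon\mathbb{S}^n\to\mathbb{S}^n$ with $L_u=\mathbb{R}\psi(u)$ and $\psi(u)\perp u$; since $\psi(u)\neq-u$, $\psi$ is homotopic to the identity and hence surjective.

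The crucial new ingredient is the projection analogue of Lemma \ref{lemU}: \emph{if $L_v\subset u^\perp$, then $H_u\cap v^\perp = H_v\cap u^\perp$.} To prove it, set $W := u^\perp\cap v^\perp$; since orthogonal projections compose, $(B|u)|W = B|W = (B|v)|W$. On the $v$-side, $L_v\subset W$ by hypothesis, so by a projection analogue of Lemma \ref{lemuabset} (same profile-function description), $(B|v)|W$ is an affine body of revolution with hyperplane of revolution $H_v\cap u^\perp$, and a projection analogue of Lemma \ref{lemaun1} shows it is non-elliptical. From the $u$-side, $L_u$ may fail to lie in $W$, so the symmetric reasoning of the sections case breaks down. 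Instead I would track which rotations in the revolution group $SO_{n-1}$ of $B|u$ preserve the kernel of the projection $u^\perp\to W$: when that kernel equals $L_u$, the projection $(B|u)|W$ is forced to be an ellipsoid, contradicting the $v$-side; in every other case the stabilizer contains a copy of $SO_{n-2}$, which (thanks to $n\geq 4$) descends to a faithful rotation action on $(B|u)|W$ fixing pointwise the projection of $L_u$ into $W$ and rotating a complementary subspace that a direct computation identifies as $H_u\cap v^\perp$. Uniqueness of the hyperplane of revolution from Corollary \ref{lemapdos} now forces $H_u\cap v^\perp = H_v\cap u^\perp$.

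Granted the lemma, the endgame is word-for-word that of Theorem \ref{thmAn}: pick $x_0\in H_{u_0}\cap\mathbb{S}^n$, use the surjectivity of $\psi$ to find $v_0$ with $\psi(v_0)=x_0$, so $L_{v_0}=\mathbb{R}x_0\subset H_{u_0}\subset u_0^\perp$; the lemma then gives $L_{v_0}\subset H_{u_0}\cap v_0^\perp = H_{v_0}\cap u_0^\perp\subset H_{v_0}$, contradicting the transversality of $L_{v_0}$ and $H_{v_0}$. The real obstacle is the projection version of Lemma \ref{lemU}: in the sections case $u^\perp\cap v^\perp\cap B$ carries the inherited revolution structure symmetrically from either side and one applies Lemma \ref{lemuabset} twice, but for projections the axis of $(B|u)|W$ is generically only the shadow of $L_u$ in $W$, so one must explicitly decompose $SO_{n-1}$ with respect to the kernel $W^\perp$ and use $n\geq 4$ to guarantee that $SO_{n-2}$ still acts without nonzero fixed vectors on the perpendicular subspace, securing the hyperplane-of-revolution identification.
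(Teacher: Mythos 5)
Your skeleton (the line field $u\mapsto L_u$, the map $\psi$, and the final contradiction) is the right shape of adaptation, but the proposal breaks exactly at the point you yourself flag as crucial: the projection analogue of Lemma \ref{lemU}. Your $v$-side claim --- that $(B|v)|W$ is an affine body of revolution with hyperplane of revolution $H_v\cap u^\perp$ --- is false in general. For sections the hyperplane of revolution passes to a subspace by intersection (Lemma \ref{lemuabset}), but orthogonal projection does not interact with an \emph{affine} revolution structure that way: writing $B|v=T(R)$ with $R$ a euclidean body of revolution (axis $L$, hyperplane $H=L^\perp$), the revolution symmetries that descend to the shadow are the conjugates $ThT^{-1}$ with $h$ fixing the $H$-component $w$ of $T^{-1}(\tilde u)$ ($\tilde u$ the projection direction), and the resulting hyperplane of revolution of the shadow is the image of $T(H\cap w^\perp)$, which coincides with $H_v\cap u^\perp$ only under special orthogonality conditions (e.g.\ $T$ orthogonal, or $\tilde u\in H_v$). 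A concrete counterexample: in $\mathbb{R}^3$ let $R=\mathrm{conv}(D\cup\{\pm e_3\})$ be the double cone over the unit disk $D$ in the $xy$-plane, let $T e_1=e_1$, $Te_2=e_2+e_3$, $Te_3=e_1+e_3$, and project $K=T(R)$ orthogonally onto the $xz$-plane $\Gamma$, which contains the axis $L_K=\langle(1,0,1)\rangle$. The shadow is $\mathrm{conv}$ of the unit disk and the points $\pm(1,1)$ (coordinates $(x,z)$); its revolution structure has axis the line $z=x$ and moving line $\langle(1,-1)\rangle$, whereas $H_K\cap\Gamma$ is the $x$-axis, and the affine reflection fixing $L_K$ and negating the $x$-axis does not preserve the shadow (it sends $(-\tfrac{\sqrt2}{2},\tfrac{\sqrt2}{2})$ to $(\tfrac{3\sqrt2}{2},\tfrac{\sqrt2}{2})$, outside the body, which lies in $\{x\le 1\}$). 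The same group computation in higher dimensions, where uniqueness from Corollary \ref{lemapdos} applies, shows the identification fails generically; and your $u$-side step (``a direct computation identifies as $H_u\cap v^\perp$'') is only asserted and fails for the same reason. So the lemma as you state it is unproved (and doubtful), and with it the endgame collapses.

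The ``adjustments'' needed are not cosmetic: projections dualize the roles of the axis and of the normal to the hyperplane of revolution. Since $B$ is symmetric, polarity gives $(B|u^\perp)^{*}=B^{*}\cap u^{\perp}$, and the polar of an origin-symmetric affine body of revolution is again one, with axis $H_u^{\perp}\cap u^{\perp}$ and hyperplane of revolution $L_u^{\perp}\cap u^{\perp}$; ellipsoids correspond to ellipsoids. Hence the line field one should actually complete-turn is $u\mapsto H_u^{\perp}\cap u^{\perp}$, and the correct analogue of Lemma \ref{lemU} is the dual statement for these data. Indeed, the cleanest repair of your argument is to apply Theorem \ref{thmAn} verbatim to the polar body $B^{*}$: every projection of $B$ is an affine body of revolution iff every central section of $B^{*}$ is, and an elliptical section of $B^{*}$ yields the desired elliptical projection of $B$.
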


This result, together with Theorem \ref{thmcPkey}, immediately implies the following characterization of the ellipsoid first proved by Montejano in \cite{M3}.
 
\begin{teo}\label{thmP} 
Let $B\subset \mathbb{R}^{n+1}$, $n\equiv 0,1, 2$ mod 4,  $n\geq 2$, be a  convex body all of whose orthogonal projections onto hyperplanes are affinely equivalent. Then $B$ is an ellipsoid.
\end{teo}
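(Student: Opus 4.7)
The strategy is to chain the two main technical tools already developed in this section, namely Theorem \ref{thmcPkey} and Theorem \ref{thmPAn}, and then invoke the classical fact that a convex body whose every hyperplane projection is an ellipsoid must itself be an ellipsoid.

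First I would dispose of the symmetry issue. The family $\{B|v\}_{v\in\mathbb{S}^n}$ of orthogonal projections is a complete turning of $B|e_1$: since $B|v=B|(-v)$, the congruence/affine field it defines satisfies $\kappa(v)-v=\kappa(-v)+v$ automatically. By the complete-turning obstruction underlying Theorem \ref{teom} (the same topological argument recalled in the discussion preceding Theorem \ref{teopc}), each projection $B|v$ must be centrally symmetric. A convex body whose every projection is centrally symmetric is itself centrally symmetric, so after translating we may assume $B$ is symmetric with center at the origin. This puts us exactly in the hypotheses of Theorem \ref{thmcPkey} and Theorem \ref{thmPAn}.

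Next, apply Theorem \ref{thmcPkey} to produce a body of revolution $K\subset\mathbb{R}^n$ such that every orthogonal projection of $B$ onto a hyperplane is affinely equivalent to $K$. In particular, every such projection is an affine body of revolution, so the hypothesis of Theorem \ref{thmPAn} is met. By Theorem \ref{thmPAn}, at least one orthogonal projection $B|v_0$ is an ellipsoid. But $B|v_0$ is affinely equivalent to $K$, so $K$ itself is an ellipsoid, and consequently every projection $B|v$ (being affinely equivalent to $K$) is an ellipsoid.

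Finally, I would invoke the classical characterization of the ellipsoid by its projections: a convex body $B\subset\mathbb{R}^{n+1}$ with $n\geq 2$, all of whose hyperplane projections are ellipsoids, must itself be an ellipsoid (this is the projection counterpart of the section characterization and is proved in standard references such as \cite{MMO}; for symmetric bodies it is immediate from, for example, the fact that the support function restricted to each hyperplane has an ellipsoidal sublevel set). This concludes that $B$ is an ellipsoid.

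The only nontrivial link in the chain is the reduction to symmetric $B$; the rest is bookkeeping over Theorems \ref{thmcPkey} and \ref{thmPAn}, which are where the real topological work of the paper lives. The main obstacle, should one wish to fill in details rather than cite, would be verifying that the projection version of the complete-turning argument indeed forces central symmetry of each $B|v$ in precisely the same way as in Section 3, and that the passage from all projections being ellipsoids to $B$ being an ellipsoid is available in the form needed; both are by now routine in the subject.
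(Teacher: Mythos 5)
Your proposal follows essentially the same route as the paper: reduce to the symmetric case via the complete-turning argument, chain Theorem \ref{thmcPkey} with Theorem \ref{thmPAn} to obtain one (hence every) elliptical projection, and finish with the classical fact that a convex body all of whose hyperplane projections are ellipsoids is itself an ellipsoid. The only point to flag is that Theorem \ref{thmPAn} requires $n\geq 4$, so the case $n=2$ must instead be settled by the even-dimensional transitivity argument of Section 4.2, under which $K$ is already a ball and every projection is an ellipse --- a caveat the paper itself glosses over when it says the result follows ``immediately''.
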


\smallskip

\subsection{ The codimension 2 case for orthogonal projections}
In this section, we will adapt Gromov's ideas from Section 4.5 to the context of orthogonal projections.

\medskip

We need first a technical lemma.
\smallskip

\begin{lema}\label{lemAA}
Given a linear embeding $h:\mathbb R^n\to\mathbb R^m$, $2< n<m$, there is a continuous map $h^*:V_{n,2}\to V_{m,2}$ such that, 
for every $u\in  V_{n,2}$, i) $<h^*(u)>$ $\subset h(\mathbb R^n)$ and ii) $h(<u>^\perp)$ is orthogonal to $<h^*(u)>$, where $<u>$ denotes the plane generated by $u$.   

Furthermore, $h^*$ varies continuously with $h$, while $h$ varies in the space of linear embeddings from $R^n$ to $R^m$.
\end{lema}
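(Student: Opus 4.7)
The required 2-frame $h^*(u)$ is almost forced by conditions (i) and (ii): its span must be a 2-dimensional subspace of $h(\mathbb{R}^n)$ that is orthogonal in $\mathbb{R}^m$ to the $(n-2)$-dimensional subspace $h(\langle u\rangle^\perp)$. Since $h(\langle u\rangle^\perp)$ is contained in $h(\mathbb{R}^n)$, the plane
\[
W_u\;:=\;h(\mathbb{R}^n)\,\cap\, h(\langle u\rangle^\perp)^{\perp},
\]
with the orthogonal complement taken in $\mathbb{R}^m$, is the unique candidate, and a dimension count inside $h(\mathbb{R}^n)$ gives $\dim W_u = n-(n-2)=2$. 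So the plan is simply to define $h^*(u)$ as a continuously chosen orthonormal basis of $W_u$.

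To produce such a basis canonically, I would orthogonally project $h(u_1)$ and $h(u_2)$ onto $W_u$ (equivalently, subtract from each $h(u_i)$ its orthogonal projection $p_i$ onto $h(\langle u\rangle^\perp)$), obtaining vectors $w_1, w_2 \in W_u$, and then set $h^*(u)$ equal to the orthonormal 2-frame produced by the Gram--Schmidt procedure applied to $(w_1,w_2)$.

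The main (though mild) obstacle is verifying that $w_1$ and $w_2$ are linearly independent, which is the only place where the injectivity of $h$ enters. If $a_1 w_1 + a_2 w_2 = 0$, then $h(a_1 u_1 + a_2 u_2)=a_1 p_1+a_2 p_2$ lies in $h(\langle u\rangle^\perp)$; injectivity of $h$ then forces $a_1 u_1 + a_2 u_2\in \langle u\rangle \cap \langle u\rangle^\perp = \{0\}$, and orthonormality of $u$ gives $a_1=a_2=0$. Once linear independence is secured, the joint continuity of $h^*$ in $(u,h)$ is routine: the subspaces $\langle u\rangle^\perp$ and $h(\langle u\rangle^\perp)$ depend continuously on $u$ and $h$, orthogonal projection onto a continuously varying subspace of $\mathbb{R}^m$ is a continuous operation, and Gram--Schmidt is continuous on the open set of linearly independent pairs. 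Conditions (i) and (ii) then hold by construction of $W_u$.
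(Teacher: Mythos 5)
Your construction is essentially the paper's own proof: you take the same plane $W_u=h(\mathbb{R}^n)\cap h(\langle u\rangle^\perp)^\perp$ (the paper's $H$), project $h(u_1),h(u_2)$ onto it, and apply Gram--Schmidt. The only difference is that you also verify the linear independence of the projected vectors (the one point where injectivity of $h$ is used) and the joint continuity, details the paper leaves implicit; your argument is correct.
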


\pf Let $H\subset h(\mathbb R^n)$ be the plane such that $H$ is orthogonal to $h(<u>^\perp)$ and let $\pi:h(\mathbb R^n)\to H$ be the orthogonal projection. Then, given $u=(u_1,u_2)\in V_{n,2}$, let $h^*(u_1,u_2)= \big(GS^1(\pi(u_1),\pi(u_2)), GS^2(\pi(u_1),\pi(u_2))\big)\in V_{m,2}$, where given a pair of linearly independent vector $(w_1,w_2)$, denote by $\big(GS^1(w_1,w_2), GS^2(w_1,w_2)\big)$  the $2$-frame obtained from  $(w_1,w_2)$ by the Gram-Schmidt procedure in such a way that  $<w_1,w_2> = $ $<GS^1(w_1,w_2), GS^2(w_1,w_2)>.$ \qed

\smallskip

Here is  the analogous of Theorem \ref{teo53} for orthogonal projections:
\smallskip

\begin{teo}[Montejano]\label{teop53}
Let $B$ be an $(n+2)$-dimensional symmetric convex body with center at the origin and suppose 
all orthogonal projections onto $n$-planes are linearly  equivalent, $n>1$  odd.
 Then the convex body $B$ is an ellipsoid.
\end{teo}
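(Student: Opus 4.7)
The plan is to transplant Gromov's Stiefel-bundle argument from Theorem \ref{teo53} to the projection setting. By hypothesis there is a symmetric convex body $K\subset\R^n$, normalised so its minimal enclosing ellipsoid is the unit ball, which is linearly equivalent to every orthogonal projection $B|L$ of $B$ onto an $n$-dimensional subspace $L\subset\R^{n+2}$. I retain the Stiefel manifolds $V_{n+2,4}$, $V_{n+2,2}$, $V_{n,2}$ and the two forgetful fibre bundles $p_1,p_2\colon V_{n+2,4}\to V_{n+2,2}$. Fix a 2-plane $\Delta\subset\R^{n+2}$ and set
\[
V = \{(e_1,e_2)\in V_{n+2,2} : B|\langle e_1,e_2\rangle \text{ is linearly equivalent to } B|\Delta\},
\]
\[
V' = \{(e_1,e_2)\in V_{n,2} : K|\langle e_1,e_2\rangle \text{ is linearly equivalent to } B|\Delta\},
\]
together with $\tilde V=p_1^{-1}(V)$. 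The aim is to prove that the restriction $p_2|_{\tilde V}\colon\tilde V\to V_{n+2,2}$ is a locally trivial fibre bundle with fibre $V'$; Lemma \ref{lempro} then forces $V=V_{n+2,2}$, so every 2-dimensional orthogonal projection of $B$ becomes linearly equivalent to $B|\Delta$.

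For the local triviality, over an open contractible $U\subset V_{n+2,2}$ I first build a continuous map $\Lambda\colon U\to\mathrm{GL}(n,n+2)$ with $\Lambda_{e_3,e_4}(\R^n)=\langle e_3,e_4\rangle^\perp$ and $\Lambda_{e_3,e_4}(K)=B|\langle e_3,e_4\rangle^\perp$; this is possible because the bundle of linear equivalences realising the hypothesis is a principal $G_K$-bundle over $V_{n+2,2}$ and therefore trivialises over the contractible $U$. Setting $h=\Lambda_{e_3,e_4}$ and invoking Lemma \ref{lemAA} to obtain $h^*\colon V_{n,2}\to V_{n+2,2}$, define
\[
\Phi\bigl((e_3,e_4),(e_1,e_2)\bigr)=\bigl(h^*(e_1,e_2),e_3,e_4\bigr).
\]
The heart of the argument is to check that $h^*(e_1,e_2)\in V$ whenever $(e_1,e_2)\in V'$. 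Writing $P=\langle h^*(e_1,e_2)\rangle\subset h(\R^n)$ and using $B|h(\R^n)=h(K)$, the projection $B|P$ equals the orthogonal projection of $h(K)$ onto $P$ carried out inside $h(\R^n)$. By condition (ii) of Lemma \ref{lemAA}, the orthogonal complement of $P$ inside $h(\R^n)$ is exactly $h(\langle e_1,e_2\rangle^\perp)$; the composite $\R^n\xrightarrow{h}h(\R^n)\to P$ therefore has kernel $\langle e_1,e_2\rangle^\perp$ and induces a linear isomorphism $\psi\colon\langle e_1,e_2\rangle\to P$ with $B|P=\psi(K|\langle e_1,e_2\rangle)$. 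Since $(e_1,e_2)\in V'$, this shows $B|P$ is linearly equivalent to $B|\Delta$, so $h^*(e_1,e_2)\in V$ as required. Standard checks show $\Phi$ is a fibre-preserving homeomorphism.

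With $V=V_{n+2,2}$ in hand, every 2-dimensional projection of $B$ through the origin is linearly equivalent to $B|\Delta$. To promote this to ``ellipsoid" I pass to the polar body: for every subspace $L\subset\R^{n+2}$ through the origin, $(B|L)^{\circ_L}=B^\circ\cap L$, hence every 2-dimensional section of $B^\circ$ through the origin is linearly equivalent to $(B|\Delta)^{\circ_\Delta}$. The $n=2$ case of the Banach conjecture, proved by Auerbach, Mazur and Ulam~\cite{AMU}, then forces $B^\circ$ to be an ellipsoid, and hence $B$ is an ellipsoid. The main obstacle I anticipate is the dual task of producing $\Lambda$ continuously on $U$ and verifying that $h^*$ lands in $V$; this is precisely the step where Lemma \ref{lemAA} enters, and it is the technical adjustment that distinguishes the projection case from Gromov's original section case.
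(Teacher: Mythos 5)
Your proposal is correct and follows essentially the same route as the paper: the same Stiefel-manifold setup $V$, $V'$, $\tilde V$ with $p_1,p_2$, the same construction of $\Lambda$ over a contractible $U$ and of $\Phi$ via Lemma \ref{lemAA}, and the same appeal to Lemma \ref{lempro} to force $V=V_{n+2,2}$; your explicit check that $h^{*}(e_1,e_2)\in V$ (kernel of the composite projection equals $\langle e_1,e_2\rangle^{\perp}$, so $B|P=\psi(K|\langle e_1,e_2\rangle)$) fills in a step the paper only asserts. The one divergence is the endgame: the paper concludes by applying Theorem \ref{thmP} with $n=2$ to the $2$-dimensional projections, whereas you dualize via $(B|L)^{\circ_L}=B^{\circ}\cap L$ and invoke Auerbach--Mazur--Ulam \cite{AMU} on the $2$-dimensional central sections of $B^{\circ}$; both reductions are valid, and yours has the mild advantage of not depending on the projection theorem \ref{thmP}.
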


\pf There is a symmetric convex body $K\subset \mathbb R^n$ with the property that the minimal ellipsoid containing $K$ is the unit ball of $\mathbb R^n$ and such that every  orthogonal projections of B onto an $n$-dimensional subspace are linearly equivalent
to $K$.  
Let us fix a $2$-dimensional plane $\Delta$ through the origin and define 
$V\subset V_{n+2,2}$ be the set of $2$-frames $(e_1,e_2)$ in $\mathbb R^{n+2}$ such that the orthogonal projection of $B$ onto $<e_1,e_2>$ is linearly equivalent to the orthogonal projection of $B$ onto $\Delta$.
Furthermore, let $V'\subset V_{n,2}$ be the set of $2$-frames $(e_1,e_2)$ in $\mathbb R^n$ such that 
the orthogonal projection of $K$ onto 
$<e_1,e_2>$ is linearly equivalent to the orthogonal  projection of $B$ onto $\Delta$. Finally, let $\tilde V=p_1^{-1}(V)= \{((e_1,e_2,e_3,e_4)\in V_{n+2,4}\mid (e_1,e_2)\in V\}.$

We shall first prove that the restriction $p_2|:\tilde V\to V_{n+2,2}$ is a locally trivial bundle with fiber $V'$. 
For that purpose, consider $U$ an open contractible subset of $V_{n+2,2}$.  Then, using the contractibility of $U$ and the existence of a field of convex bodies, lineally equivalent to $K$, contained in the fibers of the canonical vector bundle of $n$-subspaces in $\mathbb R^{n+2}$, 
it is possible to construct a continuous map $\Lambda:U\to GL(n,n+2)$ satisfying the following properties: 

\smallskip

\begin{enumerate}
\item For every $(e_3,e_4)\in U$, $\Lambda_{e_3,e_4}:\mathbb R^n\to \mathbb R^{n+2}$ is an embedding,
\item For every $(e_3,e_4)\in U$, $\Lambda_{e_3,e_4}(\mathbb R^n)$ is orthogonal to both $e_3$ and $e_4$,
\item For every $(e_3,e_4)\in U$, $\Lambda_{e_3,e_4}(K)$ is the orthogonal projection of $B$ onto $\Lambda_{e_3,e_4}(\mathbb R^n)$.\end{enumerate}

\smallskip

Define the following fiber preserving map 

\smallskip
 
$$\Phi:U\times V' \to V_{n+2,4}$$
given by  $\Phi\big((e_3,e_4), (e_1,e_2)\big)= \big(h^*(e_1,e_2), e_3, e_4\big)$.

\smallskip

First of all, by (2), $ \big(h^*(e_1,e_2), e_3, e_4\big)\in V_{n+2,4}$. 
Moreover, by (1) and Lemma \ref{lemAA},  $\big(h^*(e_1,e_2)\big)\in V$ and therefore 
$ \big(h^*(e_1,e_2), e_3, e_4\big)\in p_2|^{-1}(U)$. Hence,
we obtain a fiber preserving homeomorphism 

\smallskip

$$
\begin{tikzcd}[row sep=large]
U\times V'\arrow[swap,"proj",d] \arrow[r, hook,"\Phi"] & p_2|^{-1}(U)\arrow[d, "p_2"]\\
U \arrow[swap,r,"id"] &U.
\end{tikzcd}
$$
Thus  proving that $p_2|:\tilde V\to V_{n+2,2}$ is a locally trivial bundle with fiber $V'$.  If this is so, by Lemma \ref{lempro}, $V=V_{n+2,2}$.  
This implies that every two orthogonal projections onto $2$-dimensional planes are affinely equivalent and hence, by Theorem \ref{thmP}, for $n=2$, that $K$ is an ellipsoid. \qed

\smallskip

\sn{\bf Acknowledgments.} We acknowledges  support  from CONACyT under 
project 166306 and  from PAPIIT-UNAM under project IN112614. We thank Rolf Schneider for his comments.

\end{document}